\documentclass[10pt]{article}
\textwidth= 5.00in
\textheight= 7.4in
\topmargin = 30pt
\evensidemargin=0pt
\oddsidemargin=55pt
\headsep=17pt
\parskip=.5pt
\parindent=12pt
\font\smallit=cmti10

\usepackage{amsthm,amsfonts,amssymb,amsmath,epsf, verbatim}
\usepackage{url}
\usepackage[verbose]{placeins}
\usepackage[letterpaper,top=2cm,bottom=2cm,left=3cm,right=3cm,marginparwidth=1.75cm]{geometry}
\usepackage[sort,numbers]{natbib} 
\usepackage{caption}
\usepackage{graphicx}

\makeatletter

\renewcommand\section{\@startsection {section}{1}{\z@}
{-30pt \@plus -1ex \@minus -.2ex}
{2.3ex \@plus.2ex}
{\normalfont\normalsize\bfseries\boldmath}}

\renewcommand\subsection{\@startsection{subsection}{2}{\z@}
{-3.25ex\@plus -1ex \@minus -.2ex}
{1.5ex \@plus .2ex}
{\normalfont\normalsize\bfseries\boldmath}}

\renewcommand{\@seccntformat}[1]{\csname the#1\endcsname. }

\makeatother

\newtheorem{theorem}{Theorem}
\newtheorem{lemma}{Lemma}
\newtheorem{proposition}{Proposition}
\newtheorem{corollary}{Corollary}

\theoremstyle{definition}
\newtheorem{definition}{Definition}
\newtheorem{conjecture}{Conjecture}
\newtheorem{remark}{Remark}

\begin{document}

\begin{center}
\uppercase{\bf \boldmath Complete characterization of $2$-near perfect numbers with exactly 2 prime factors}
\vskip 20pt
{\bf Richard Fearon}\\
{\smallit University of Notre Dame, Notre Dame, Indiana, USA}\\
{\tt rfearon@nd.edu}\\
\vskip 10pt
{\bf Henry Foushee}\\
{\smallit Yale University, New Haven, Connecticut, USA}\\
{\tt henry.foushee@yale.edu}\\
\vskip 10pt
{\bf Benjamin Porosoff}\\ 
{\smallit California Institute of Technology, Pasadena, California, USA}\\
{\tt bporosof@caltech.edu}\\
\vskip 10pt
{\bf Alexander Skula}\\
{\smallit Massachusetts Institute of Technology, Cambridge, Massachusetts, USA}\\
{\tt skula@mit.edu}\\
\vskip 10pt
{\bf Joshua Zelinsky}\\
{\smallit Department of Mathematics, Hopkins School, New Haven, Connecticut, USA}\\
{\tt jzelinsky@hopkins.edu}\\
\vskip 10pt
{\bf Kyle Zhang}\\
{\smallit Massachusetts Institute of Technology, Cambridge, Massachusetts, USA}\\
{\tt kylez@mit.edu}\\
\end{center}
\vskip 20pt
\centerline{\smallit Received: , Revised: , Accepted: , Published: } 
\vskip 30pt

\centerline{\bf Abstract}
\noindent
A positive integer $n$ is \emph{$2$-near perfect} if $\sigma(n)=2n+d_1+d_2$ for two distinct positive divisors $d_1,d_2$ of $n$. We give a complete classification of $2$-near perfect numbers of the form $2^kp^m$ with $p$ an odd prime and $m\ge3$: every such number belongs to the family $2^k(2^{k+1}-1)^3$, where $p=2^{k+1}-1$ is a Mersenne prime and the omitted divisors are $p$ and $p^2$. In particular, there are infinitely many $2$-near perfect numbers in this family if and only if there are infinitely many Mersenne primes, disproving a conjecture of Aryan, Madhavani, Parikh, and Zelinsky that only finitely many such numbers exist. Combined with prior work handling $m\in\{1,2\}$, this yields a complete characterization of all $2$-near perfect numbers with exactly two distinct prime factors.

\pagestyle{myheadings}
\markright{} 
\thispagestyle{empty}
\baselineskip=12.875pt
\vskip 30pt



\section{Introduction}

A classical object of study in number theory is the perfect number, an integer $n$ such that the sum of its divisors, $\sigma(n)$, equals $2n$. This concept has been generalized in various ways. One such generalization is pseudoperfect numbers. A number $n$ is said to be pseudoperfect if $n$ is equal to the sum of some subset of its divisors. 

Pollack and Shevelev \cite{PS} refined this hierarchy. They defined a number to be a $k$-near perfect number if $\sigma(n) = 2n + \sum_{i=1}^k d_i$ for $k$ distinct positive divisors $d_i$ of $n$. We call $d_1,\ldots,d_k$ the \textit{omitted divisors} of $n$. A number $n$ is said to be \textit{abundant} if it satisfies $\sigma(n) > 2n$. If $n$ is $s$-near perfect for some $s>0$, then $n$ must be abundant, but the converse does not hold; 70 is a counterexample. Numbers which are abundant but not $k$-near perfect for any $k$ are said to be \textit{weird} and were first investigated by Benkowski and Erd\H{o}s \cite{BE}. There is a classic conjecture that all weird numbers are even.

Recently, Jeba S, Roy, and Saikia \cite{JRS} introduced the related idea of a $k$-facile number which is a number $n$ such that $\sigma(n)= 2n + \prod_{i=1}^k d_i$ where for $1 \leq i \leq k$ the $d_i$ are distinct divisors of $n$.

In the same paper where Pollack and Shevelev introduced $k$-near perfect numbers, they constructed the following three families of $1$-near perfect numbers:
\begin{enumerate}
\item $2^{t-1}(2^t-2^k-1)$ where $2^t-2^k-1$ is prime. Here $2^k$ is the omitted divisor.
\item $2^{2p-1}(2^p-1)$ where $2^p-1$ is prime. Here $2^p(2^p-1)$ is the omitted divisor.
\item $2^{p-1}(2^p-1)^2$ where $2^p-1$ is prime. Here $2^p-1$ is the omitted divisor.
\end{enumerate}

Ren and Chen \cite{RC} showed that all near perfect numbers with two distinct prime factors are either 40 or belong to one of the three families found by Pollack and Shevelev. Hasanalizade \cite{EH} gave a partial classification of near perfect numbers that are Fibonacci or Lucas numbers. Das and Saikia \cite{DasSaikia} described near perfect numbers of certain forms related to the Fibonacci sequence. Tang, Ma, and Feng \cite{TMF} showed that the only odd near perfect number with four or fewer distinct prime divisors is $173369889=(3^4)(7^2)(11^2)(19^2).$ Adajar \cite{Adajar} independently proved the same result and showed that, for any positive integer $k$, there are only finitely many odd near perfect numbers with $k$ distinct prime factors.

In the other direction, Cohen, Cordwell, Epstein, Kwan, Lott, and Miller \cite{CCEKLM} showed that for any fixed $k \geq 4$ there are infinitely many $k$-near perfect numbers and gave an asymptotic expression for their density. Thus, $k$-near perfect numbers are well-understood for $k =1$ and $k \geq 4$, with $k=2$ or $k=3$ being more open.

This paper focuses on $2$-near perfect numbers, that is, numbers satisfying $\sigma(n) = 2n + d_1 + d_2$ where $d_1$ and $d_2$ are distinct positive divisors of $n$. Aryan, Madhavani, Parikh, and the penultimate author of this paper \cite{AMPSZ} gave a complete classification of $2$-near perfect numbers of the form $2^k p^m$ with $p$ an odd prime and $m \in \{1,2\}$. Their results are the following two theorems.

\begin{theorem} Let $n=2^kp$ be $2$-near perfect with $p$ an odd prime. Then exactly one of the following statements is true: \label{classification of 2 near perfect of form power of two times a prime}
    \begin{itemize}
        \item $p=2^k-1$, and the omitted divisors are 1 and $p$. 
        \item $p=2^{k+1} -2^a -2^b-1$ for some $a, b \in \mathbb{N}$, and the omitted divisors are $2^a$ and $2^b$.
        \item $p=\frac{2^{k+1}-2^a-1}{1+2^b}$ for some $a, b \in \mathbb{N}$, and the omitted divisors are $2^a$ and $2^bp$. 
        \item $p=\frac{2^{k+1}-1}{1+2^a+2^b}$ for some $a, b \in \mathbb{N}$, and the omitted divisors are $2^ap$ and $2^bp$.
    \end{itemize}
\end{theorem}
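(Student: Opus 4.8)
The plan is to convert the $2$-near perfect condition into a single explicit identity and then read off the possibilities from the very restricted divisor structure of $n=2^kp$. Since $\sigma$ is multiplicative, $\sigma(n)=\sigma(2^k)\sigma(p)=(2^{k+1}-1)(p+1)$, while $2n=2^{k+1}p$. Substituting into $\sigma(n)=2n+d_1+d_2$ and expanding $(2^{k+1}-1)(p+1)=2^{k+1}p+2^{k+1}-p-1$, the $2^{k+1}p$ terms cancel and leave the clean relation
\[
d_1+d_2=2^{k+1}-p-1 .
\]
Because $p$ is odd, the right-hand side is even, so $d_1$ and $d_2$ have the same parity; this parity remark is what ultimately isolates the first bullet from the rest.

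Next I would use that every divisor of $n$ is of the form $2^a$ or $2^a p$ with $0\le a\le k$, and split according to how many of the two omitted divisors are divisible by $p$. If neither is, write $d_1=2^a,\ d_2=2^b$; the identity gives $p=2^{k+1}-2^a-2^b-1$ (second bullet). If exactly one is, say $d_1=2^a$ and $d_2=2^bp$, then grouping the $p$ terms yields $p(1+2^b)=2^{k+1}-2^a-1$, i.e. $p=(2^{k+1}-2^a-1)/(1+2^b)$ (third bullet). If both are, say $d_1=2^ap$ and $d_2=2^bp$, then $p(1+2^a+2^b)=2^{k+1}-1$, i.e. $p=(2^{k+1}-1)/(1+2^a+2^b)$ (fourth bullet). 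In each case distinctness of $d_1,d_2$ is automatic (in the mixed case the two divisors have different $p$-adic valuation) or reduces to $a\ne b$, and I would record the constraints $0\le a,b\le k$ that make the $d_i$ genuine divisors.

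The first bullet then appears as the degenerate corner of this analysis. The parity remark forces $d_1,d_2$ to be both even or both odd, and the only odd divisors of $n$ are $1$ and $p$, so the ``both odd'' branch forces $\{d_1,d_2\}=\{1,p\}$, whence $1+p=2^{k+1}-p-1$ and $p=2^k-1$. This is exactly the $a=b=0$ specialization of the third-bullet formula (with omitted divisors $2^0=1$ and $2^0p=p$), so I would peel it off and state the remaining three families with the exponents positive; the parity constraint shows this is forced, since in each of those three cases both divisors must be even. I expect the real work to lie not in any single computation but in the bookkeeping behind the word ``exactly'': confirming that the parity-and-valuation type of the omitted pair is well defined and partitions the possibilities, that the degenerate $\{1,p\}$ case is isolated cleanly, and that the stated parameter ranges keep each family nonempty while no two bullets describe the same $n$. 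I would also verify the converse direction, namely that each displayed relation (with $p$ prime and the exponents in range) returns a genuine $2$-near perfect number, to confirm the characterization is sharp.
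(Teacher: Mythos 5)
Your argument is correct and is essentially the approach the paper's own machinery specializes to: your identity $d_1+d_2=2^{k+1}-p-1$ is exactly the $m=1$ case of Lemma \ref{lem:equivalent_form}, your parity observation is the $m=1$ case of Lemma \ref{lem:parity_constraint}, and the trichotomy according to how many omitted divisors are divisible by $p$ then yields the four bullets. (The paper quotes Theorem \ref{classification of 2 near perfect of form power of two times a prime} from \cite{AMPSZ} without reproving it, but your isolation of the degenerate $\{1,p\}$ pair via the parity constraint is the right way to account for the ``exactly one'' clause, and the converse check you mention is immediate by reversing the computation.)
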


\begin{theorem} 
\label{2 near of form power of two times prime squared}
    Let $n=2^kp^2$ be $2$-near perfect with $p$ an odd prime. Then $n\in\{18,36,200\}$.
\end{theorem}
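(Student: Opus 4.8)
The plan is to reduce the hypothesis to a single Diophantine equation and then run a finite case analysis on the $p$-adic sizes of the two omitted divisors. Using $\sigma(n)=(2^{k+1}-1)(1+p+p^2)$ and $2n=2^{k+1}p^2$, the defining relation $\sigma(n)=2n+d_1+d_2$ becomes
\[
d_1+d_2 \;=\; (2^{k+1}-1)(p+1)-p^2 \;=:\; R.
\]
Two elementary bounds drive everything. Abundancy ($R>0$) rearranges to $2^{k+1}\ge p+1$, bounding $p$ in terms of $k$; and since $\sigma(n)/n<2\,(1+p^{-1}+p^{-2})<3$ for every odd prime $p$, we get $R<n$, so each $d_i$ is a proper divisor with $d_i\le n/2$. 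Writing $d_i=2^{a_i}p^{b_i}$ with $0\le a_i\le k$ and $0\le b_i\le 2$, I would split into cases according to the pair $(b_1,b_2)$, in each case combining a congruence with the size estimate $0<R<n$.

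The case $b_1=b_2=0$ is quickest: then $R=2^{a_1}+2^{a_2}<2^{k+1}$, which together with $R>0$ (i.e. $2^{k+1}\ge p+1$) is incompatible unless $p$ is tiny, and a direct check leaves only $p=3,\ k=1$, giving $n=18$. When exactly one $b_i$ vanishes, say $d_2=2^{a_2}$, reducing $R\equiv 2^{k+1}-1\pmod p$ gives $2^{a_2}\equiv 2^{k+1}-1\pmod p$, which pins the surviving power of two modulo $p$; the size bound confines $p$ to a short window near $2^k$, and the resulting equation (read also modulo $p+1$) leaves only finitely many candidates, of which $n=36$ is the only solution. The remaining case $b_1,b_2\ge 1$ is the crux and is where $200=2^3\cdot 5^2$ lives (here $2^4-1=15=3\cdot 5$). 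Now $p\mid R$ forces $p\mid 2^{k+1}-1$; writing $2^{k+1}-1=pt$ gives
\[
R \;=\; p\bigl(t(p+1)-p\bigr),
\]
and dividing the equation by $p$ turns it into an expression of $t(p+1)-p$ as a sum of one or two powers of two, each at most $2^k$, further split by whether the divisor carrying $p^2$ is present.

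The main obstacle is precisely this last case, because a priori nothing bounds $k$. The key is that the identity $tp=2^{k+1}-1$ rigidly couples $t$ and $p$: once the right-hand side is required to have binary weight one or two (a very restrictive condition) and to lie below $2^{k+1}$, the two constraints $tp=2^{k+1}-1$ and ``$t(p+1)-p$ has small binary weight'' can hold only for finitely many $k$, after which a short computation pins down $n=200$ as the only survivor. Assembling the three cases yields $n\in\{18,36,200\}$, and I expect the bookkeeping in the $b_1,b_2\ge 1$ case—tracking the sign of the auxiliary quantity $t(p+1)-p$ and the admissible ranges of $a_1,a_2$—to be the most delicate part of the write-up.
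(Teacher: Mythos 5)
This theorem is not proved anywhere in the paper: it is imported verbatim from the earlier work \cite{AMPSZ}, so there is no internal proof to measure your argument against. Judged on its own terms, your reduction is correct --- $d_1+d_2=(2^{k+1}-1)(p+1)-p^2$, abundance gives $2^{k+1}\ge p+1$, and the case $b_1=b_2=0$ genuinely closes: the bound $2^{a_1}+2^{a_2}<2^{k+1}$ forces $2^{k+1}p<p^2+p+1$, hence $2^{k+1}=p+1$, and then $2^{a_1}+2^{a_2}=p=2^{k+1}-1$ compares binary weight $2$ against $k+1$ ones, giving $k=1$, $p=3$, $n=18$.

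The two cases that actually contain $36$ and $200$, however, are asserted rather than proved, and the asserted mechanisms do not work as stated. In the mixed case, the congruence $2^{a_2}\equiv 2^{k+1}-1\pmod p$ only says $p\mid 2^{k+1}-1-2^{a_2}$; it does not ``confine $p$ to a short window near $2^k$,'' since $p$ may a priori be a small prime factor of that quantity (nothing you have written excludes, say, $p=3$ with $k$ large; one needs an extra input such as $d_1+d_2\le \tfrac{n}{2}+\tfrac{n}{3}$, i.e.\ $\sigma(n)/n\le \tfrac{17}{6}$, to couple $k$ to $p$). Worse, in the crux case the claim that ``$tp=2^{k+1}-1$ together with $t(p+1)-p$ having small binary weight can hold only for finitely many $k$'' is false as stated: take $t=1$ and $p=2^{k+1}-1$ a Mersenne prime, so that $t(p+1)-p=1$ has binary weight one and lies below $2^{k+1}$; both constraints hold for every Mersenne prime. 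That family is eliminated only by the further observation that then $R=p$ while two distinct divisors each divisible by $p$ sum to at least $3p$, and more generally each sub-case $(b_1,b_2)\in\{(1,1),(1,2),(2,2)\}$ needs its own size-plus-congruence argument (this is where $200$, with $p=5$, $t=3$, survives). Until those sub-cases are actually carried out, the argument has a genuine gap precisely at its hardest point.
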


The same authors \cite{AMPSZ} also conjectured the following.

\begin{conjecture}There are only finitely many $2$-near perfect numbers of the form $2^k p^m$ for $m\geq 3$. \label{conj:finitely_many}
\end{conjecture}
Our main results disprove Conjecture \ref{conj:finitely_many}, subject to the widely believed conjecture that there are infinitely many Mersenne primes. We give a complete classification of $2$-near perfect numbers of the form $2^k p^m$ where $m \geq 3$.

\begin{theorem}
    Let $n=2^kp^m$ be $2$-near perfect with $p$ an odd prime and $m\ge3$. Then $m=3$, $p=2^{k+1}-1$, and the omitted divisors are $p$ and $p^2$. \label{full characterization theorem for m>=3}
\end{theorem}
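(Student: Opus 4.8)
The plan is to convert $\sigma(n)=2n+d_1+d_2$ into a single clean identity and then pin down the two omitted divisors using size and congruence information. Writing $d_i=2^{a_i}p^{b_i}$ with $0\le a_i\le k$, $0\le b_i\le m$, and setting $c:=2^{k+1}-p$, I would use multiplicativity, $\sigma(n)=(2^{k+1}-1)\tfrac{p^{m+1}-1}{p-1}$, clear the denominator $p-1$, and simplify $\sigma(n)-2n$. A short computation collapses the expression to
\[
d_1+d_2=c\bigl(1+p+\cdots+p^{m-1}\bigr)-1 .
\]
Positivity forces $c\ge 1$, i.e. $p<2^{k+1}$; and since $\tfrac{\sigma(n)}{n}=(2-2^{-k})\tfrac{p^{m+1}-1}{p^m(p-1)}<2\cdot\tfrac{p}{p-1}\le 3$, we get $d_1+d_2<n$, so both $d_i$ are proper divisors. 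Reducing the identity modulo $p$ gives the key congruence $d_1+d_2\equiv c-1\pmod p$, which governs how many of the omitted divisors are divisible by $p$.

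Next I would establish a size dichotomy. The identity gives $d_1+d_2>cp^{m-1}-1$. If both $b_i\le m-2$, then $d_1+d_2\le 2^{k+1}p^{m-2}=(c+p)p^{m-2}$; combining the two bounds, dividing by $p^{m-2}$ and using $m\ge 3$ yields $cp\le 2^{k+1}=c+p$, that is $(c-1)(p-1)\le 1$, which (as $p\ge3$) forces $c=1$. Hence for $c\ge 2$ the larger exponent satisfies $\max(b_1,b_2)\ge m-1$, so one may assume $b_2\in\{m-1,m\}$. This split into $c=1$ and $c\ge 2$ organizes the rest of the argument.

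For $c=1$ (equivalently $p=2^{k+1}-1$) the identity reads $d_1+d_2=p+p^2+\cdots+p^{m-1}$, which has $p$-adic valuation exactly $1$. A comparison of sizes rules out $b_1=b_2$, so $b_1=1<b_2$; dividing by $p$ and reducing modulo $p$, the inequality $2^{a_1}\le 2^k<p$ forces $2^{a_1}\equiv1$, hence $a_1=0$ and $d_1=p$. The leftover equation is $2^{a_2}=1+p+\cdots+p^{m-3}$, whose right-hand side equals $1$ only when $m=3$ (giving $d_2=p^2$); for $m=4$ it would demand $2^{a_2}=2^{k+1}$ with $a_2>k$, and for $m\ge 5$ its right-hand side is an odd integer exceeding $1$. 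This produces exactly the asserted family: $m=3$, $p=2^{k+1}-1$, omitted divisors $p$ and $p^2$.

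The main obstacle is the case $c\ge 2$, where I must show there are no solutions. Here I would peel off the large divisor $d_2=2^{a_2}p^{b_2}$. When $b_2=m$, the identity forces the \emph{single} monomial $d_1$ to equal $e\bigl(1+p+\cdots+p^{m-1}\bigr)-(1+2^{a_2})$ with $e:=c-2^{a_2}(p-1)\ge1$, an expression of the very same shape; when $b_2=m-1$ one obtains an analogous reduced expression with $m$ replaced by $m-1$. Re-applying the size dichotomy and the mod-$p$ congruence to $d_1$ drives the parameters downward, the essential tension being that every power of two dividing $n$ is at most $2^k<2^{k+1}=c+p$, whereas the geometric sum grows like $p^m$. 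For $m\ge3$ and $c\ge2$ the forced equalities cannot hold: the recursion terminates either in a power of two equal to an odd number greater than $1$, or in an exponent of $2$ exceeding $k$, or in the relation $2^{k+1}=p^m+1$, which an elementary factorization shows to be impossible for $m\ge2$. Bookkeeping these subcases—organized by the residue of $c$ modulo $p$ and by whether $b_2=m$ or $b_2=m-1$—is the technical heart of the proof and the step I expect to demand the most care.
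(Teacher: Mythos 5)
Your reduction to the identity $d_1+d_2=(2^{k+1}-p)(1+p+\cdots+p^{m-1})-1$ is exactly the paper's Lemma \ref{lem:equivalent_form}, and your handling of the case $c=1$ is essentially correct and in fact more direct than the paper's route to that subcase. One local slip there: after deriving $2^{a_2}=1+p+\cdots+p^{m-3}$ you dismiss $m\ge5$ because the right-hand side is ``an odd integer exceeding $1$,'' but for even $m\ge6$ that sum has an even number of odd terms and is even; you would instead note that $\frac{p^{m-2}-1}{p-1}=(p+1)\left(1+p^2+\cdots+p^{m-4}\right)$ carries an odd factor greater than $1$, or appeal to the parity constraint of Lemma \ref{lem:parity_constraint}. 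That is easily patched.

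The genuine gap is the case $c\ge2$, which you correctly identify as the heart of the matter but do not prove. Your size dichotomy legitimately forces $\max(b_1,b_2)\ge m-1$ there, but everything after that --- ``peeling off'' the large divisor, ``re-applying the size dichotomy,'' a ``recursion'' that ``terminates'' in one of three contradictions --- is a plan, not an argument. Your $c\ge2$ case contains in particular the entire regime $2^{k+1}\ge p^2+1$, to which the paper devotes all of Section 2: the proof there needs the exact location of the larger omitted divisor ($d_1=n/2^j$ with $2^j<p<2^{j+1}$), the oddness of $m$, the $2$-adic valuation identity $v_2(m+1)+v_2(p+1)-1=\min[a,c]$, the resulting bound $p^{m-3}<2(m+1)$ forcing $m=3$, and finally a delicate elimination of the residual equations $xp^3=2^{k+1}-1$ for the small odd values $x\in\{1,3,5\}$. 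None of these ingredients appears in your sketch, and the terminal contradictions you list (a power of two equal to an odd number, an exponent exceeding $k$, or $2^{k+1}=p^m+1$) are not shown to exhaust what the recursion can produce; in the paper the hard endgame is precisely the case $x>1$, which your list does not address. As written, the proposal establishes that the family $m=3$, $p=2^{k+1}-1$, $\{d_1,d_2\}=\{p,p^2\}$ occurs, but not that it is the only possibility, which is the substance of the theorem.
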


Note that Theorem \ref{full characterization theorem for m>=3} implies that there are infinitely many $2$-near perfect numbers of the form $n=2^kp^m$ with $m\ge3$ if and only if there are infinitely many Mersenne primes. Since it is widely conjectured that there are infinitely many Mersenne primes, the same should hold here.

The results of this paper have been formally verified in the Lean 4 proof assistant using the Mathlib library; the formalization is available at \url{https://github.com/0xCUB3/Near-Perfect-Tester}.

We adopt the following conventions throughout. All numbers are positive integers unless otherwise specified. The letter $p$ always denotes an odd prime, $j$ is the unique integer satisfying $2^j<p<2^{j+1}$, and $v_2$ denotes the $2$-adic valuation.

The remainder of this paper is organized as follows. Section~\ref{sec:prelim} establishes foundational results, including properties of odd $2$-near perfect numbers and basic lemmas for numbers of the form $2^kp^m$. Section~\ref{sec:case1} considers Case I where $2^{k+1} \geq p^2+1$ and shows by contradiction that no $2$-near perfect numbers exist in this regime. Section~\ref{sec:case2} handles Case II where $2^{k+1} < p^2+1$, eliminating impossible divisor types and ultimately proving that $p = 2^{k+1}-1$, $m=3$, and the omitted divisors are $p$ and $p^2$. Section~\ref{sec:2deficient} extends our methods to $2$-deficient perfect numbers. Finally, Section~\ref{sec:future} presents computational results and discusses open problems.

\section{Preliminaries}
\label{sec:prelim}

\subsection{Odd $2$-near perfect numbers}

The above results give a complete classification of $2$-near perfect numbers with two distinct prime factors. This is because there are no odd $2$-near perfect numbers with two distinct prime factors.

To prove this result, we recall the following basic properties of the $\sigma(n)$ function.  

\begin{lemma}
\label{sigma formula}
Let $n = p_1^{a_1} p_2^{a_2} \cdots p_k^{a_k}$, where $p_1, p_2, \ldots, p_k$ are distinct primes and $a_1, a_2, \ldots, a_k$ are positive integers. Then
\[ \sigma(n) = \prod_{i=1}^k \left(1 + p_i + p_i^2 + \cdots + p_i^{a_i}\right) = \prod_{i=1}^k \frac{p_i^{a_i+1} - 1}{p_i - 1}. \]
\end{lemma}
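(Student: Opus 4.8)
The plan is to prove the identity directly from unique factorization, which lets the divisor sum be rewritten as a product of finite geometric series. First I would observe that, by the fundamental theorem of arithmetic, every positive divisor $d$ of $n = p_1^{a_1} p_2^{a_2} \cdots p_k^{a_k}$ has the form $d = p_1^{b_1} p_2^{b_2} \cdots p_k^{b_k}$ with $0 \le b_i \le a_i$ for each $i$, and conversely every such choice of exponents produces a distinct divisor of $n$. This establishes a bijection between the divisors of $n$ and the tuples $(b_1, \ldots, b_k)$ lying in the box $\prod_{i=1}^k \{0, 1, \ldots, a_i\}$, and it is the only step where any care is needed.

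Next I would sum over all divisors and factor the resulting multiple sum. Substituting the parametrization above into $\sigma(n) = \sum_{d \mid n} d$ gives
$$\sigma(n) = \sum_{b_1=0}^{a_1} \sum_{b_2=0}^{a_2} \cdots \sum_{b_k=0}^{a_k} \prod_{i=1}^k p_i^{b_i}.$$
Because each summation index ranges independently of the others, this multiple sum factors into a product of one-variable sums,
$$\sigma(n) = \prod_{i=1}^k \left( \sum_{b_i=0}^{a_i} p_i^{b_i} \right) = \prod_{i=1}^k \left( 1 + p_i + p_i^2 + \cdots + p_i^{a_i} \right),$$
which is the first equality of the statement. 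To obtain the closed form I would then apply the finite geometric series identity $1 + x + \cdots + x^a = \frac{x^{a+1}-1}{x-1}$ to each factor; since each $p_i > 1$ the denominator $p_i - 1$ is nonzero, and this yields $\prod_{i=1}^k \frac{p_i^{a_i+1}-1}{p_i-1}$.

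An alternative route is to first prove that $\sigma$ is multiplicative on coprime arguments and then evaluate $\sigma(p^a)$ on prime powers, but the direct factorization above is cleaner and sidesteps invoking multiplicativity as a separate lemma. There is no genuine obstacle in this proof: the entire content rests on the uniqueness of prime factorization, which is what allows the multiple sum to split as a product.
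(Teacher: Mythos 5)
Your proof is correct and complete: the bijection between divisors of $n$ and exponent tuples $(b_1,\ldots,b_k)$ with $0 \le b_i \le a_i$ follows from unique factorization, the multiple sum then factors because the indices range independently, and the geometric series identity applies since each $p_i > 1$. The paper states this lemma as a classical fact and gives no proof at all, so there is nothing to compare against; your direct factorization argument is the standard one and fills the omission cleanly.
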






We will frequently use Lemma \ref{sigma formula} without explicit reference. We also require the following standard upper bound on $\sigma(n)$.

\begin{lemma}
\label{sigma upper bound}
    If $p$ is a prime number and $m$ is a positive integer, then $\frac{p}{p-1}>\frac{\sigma(p^m)}{p^m}$.
\end{lemma}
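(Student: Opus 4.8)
The plan is to interpret $\frac{\sigma(p^m)}{p^m}$ as a truncation of a geometric series whose exact sum is $\frac{p}{p-1}$, so that the strict inequality falls out immediately upon discarding strictly positive tail terms. By Lemma \ref{sigma formula} we have $\sigma(p^m) = 1 + p + p^2 + \cdots + p^m$, and dividing through by $p^m$ gives
\[
\frac{\sigma(p^m)}{p^m} = \sum_{i=0}^{m} \frac{1}{p^i},
\]
a finite geometric sum with common ratio $1/p < 1$ (using $p \geq 2$, since $p$ is prime).

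The key observation is then that $\frac{p}{p-1} = \frac{1}{1 - 1/p} = \sum_{i=0}^{\infty} p^{-i}$ is the full geometric series, and every tail term $p^{-i}$ with $i > m$ is strictly positive. Hence the partial sum is strictly less than the total, which is precisely the claimed inequality $\frac{\sigma(p^m)}{p^m} < \frac{p}{p-1}$.

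If one prefers a purely algebraic argument avoiding infinite series, I would instead write $\sigma(p^m) = \frac{p^{m+1}-1}{p-1}$ and clear denominators: since $p - 1 > 0$ and $p^m > 0$, the inequality $\frac{p}{p-1} > \frac{p^{m+1}-1}{p^m(p-1)}$ is equivalent to $p^{m+1} > p^{m+1} - 1$, which holds trivially. There is no genuine obstacle in this lemma; the only point requiring a small amount of care is ensuring the inequality is \emph{strict}, which is secured by the strictly positive discarded term (equivalently, by the strict gap $p^{m+1} > p^{m+1}-1$).
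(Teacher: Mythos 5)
Your proof is correct, and your algebraic fallback is essentially the paper's own argument: the paper writes $\frac{\sigma(p^m)}{p^m}=\frac{p^{m+1}-1}{p^m(p-1)}<\frac{p^{m+1}}{p^m(p-1)}=\frac{p}{p-1}$, which is exactly your ``strict gap $p^{m+1}>p^{m+1}-1$'' step (the paper's final $<$ is a typo for $=$). Your geometric-series framing is just an equivalent repackaging of the same one-line computation, so there is nothing substantively different to compare.
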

\begin{proof}
    $$\frac{\sigma(p^m)}{p^m}=\frac{1+p+\ldots+p^m}{p^m}=\frac{p^{m+1}-1}{p^m(p-1)}<\frac{p^{m+1}}{p^m(p-1)}<\frac{p}{p-1}.$$
\end{proof}

\begin{corollary}
\label{cor:sigma_multiplicative_bound}
    If $n = p_1^{a_1} p_2^{a_2} \cdots p_k^{a_k}$ where $p_1, p_2, \ldots, p_k$ are distinct primes, then
    $$\frac{\sigma(n)}{n} < \prod_{i=1}^k \frac{p_i}{p_i-1}.$$
\end{corollary}
\begin{proof}
    By the multiplicativity of $\sigma$ (Lemma \ref{sigma formula}) and Lemma \ref{sigma upper bound},
    $$\frac{\sigma(n)}{n} = \prod_{i=1}^k \frac{\sigma(p_i^{a_i})}{p_i^{a_i}} < \prod_{i=1}^k \frac{p_i}{p_i-1}.$$
\end{proof}

Note that the upper bound in Corollary~\ref{cor:sigma_multiplicative_bound} is, in a certain sense, the best possible bound. If one knows only the set of prime divisors of a number $n$, but not the corresponding exponents in its prime factorization, then no upper bound sharper than the one given can be obtained.

We now prove the main result of this subsection.

\begin{proposition}\label{prop:no_odd_2near}
    There are no odd $2$-near perfect numbers with exactly 2 prime factors.
\end{proposition}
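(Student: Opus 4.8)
The plan is to show that any odd integer with exactly two prime factors fails to be even abundant, which instantly rules it out as a $2$-near perfect number. First I would write such a number as $n = p^a q^b$, where $p < q$ are distinct \emph{odd} primes and $a,b \ge 1$. Since $\sigma$ is multiplicative, $\frac{\sigma(n)}{n} = \frac{\sigma(p^a)}{p^a}\cdot\frac{\sigma(q^b)}{q^b}$, so the whole argument reduces to bounding each factor separately, which is exactly what Lemma~\ref{sigma upper bound} provides.

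Applying Lemma~\ref{sigma upper bound} to each prime power gives $\frac{\sigma(n)}{n} < \frac{p}{p-1}\cdot\frac{q}{q-1}$. The key observation is then purely numerical: because $n$ is odd, neither prime can be $2$, so both are at least $3$, and distinctness forces $p \ge 3$ and $q \ge 5$. Since $t \mapsto \frac{t}{t-1}$ is decreasing on $t > 1$, the product $\frac{p}{p-1}\cdot\frac{q}{q-1}$ is largest when the primes are smallest, i.e. at $p = 3$, $q = 5$. This yields
$$\frac{\sigma(n)}{n} < \frac{3}{2}\cdot\frac{5}{4} = \frac{15}{8} < 2,$$
so $\sigma(n) < 2n$; that is, $n$ is deficient.

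To finish, I would invoke the definition directly: if $n$ were $2$-near perfect, then $\sigma(n) = 2n + d_1 + d_2$ with $d_1, d_2$ positive divisors, whence $\sigma(n) > 2n$. This contradicts $\sigma(n) < 2n$, so no odd $2$-near perfect number with exactly two prime factors can exist. There is essentially no obstacle here: the only point requiring any care is the reduction to the worst case $(p,q) = (3,5)$ via the monotonicity of $\frac{t}{t-1}$, after which the strict inequality $\frac{15}{8} < 2$ does all the work. The result is therefore an immediate corollary of the abundancy upper bound, and the contrast with the even case — where the factor $\frac{2}{2-1} = 2$ alone already reaches the threshold — explains why only the odd case collapses so cleanly.
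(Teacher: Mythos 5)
Your argument is correct and is essentially identical to the paper's proof: both use the multiplicativity of $\sigma$, the bound $\frac{\sigma(p^m)}{p^m} < \frac{p}{p-1}$ from Lemma~\ref{sigma upper bound}, and the monotonicity of $t \mapsto \frac{t}{t-1}$ to reduce to the worst case $(p,q)=(3,5)$, giving $\frac{\sigma(n)}{n} < \frac{15}{8} < 2$, which contradicts the abundance forced by the $2$-near perfect condition.
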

\begin{proof}
    Suppose, for the sake of contradiction, that $n=p^mq^s$ is $2$-near perfect, where $p$ and $q$ are distinct odd primes and $m$ and $s$ are positive integers. By definition, $\sigma(n)=2n+d_1+d_2$. Therefore

    $$\frac{\sigma(n)}{n}>2$$ which implies that 
    \begin{equation*}
     \frac{15}{8} =  \frac{3}{3-1}\frac{5}{5-1} \geq   \frac{p}{p-1}\frac{q}{q-1}> \frac{\sigma(p^m)}{p^m}\frac{\sigma(q^s)}{q^s}> 2,
    \end{equation*}
    which is a contradiction. Note that at the third step above we are using that $\frac{x}{x-1}$ is a strictly decreasing function.
\end{proof}

Therefore, if $n$ is $2$-near perfect and has exactly 2 prime factors, then $n$ must be even. For this reason, in the rest of the paper, we shall deal with the even case.


\subsection{Introductory Lemmas}

We begin by establishing several basic properties of even $2$-near perfect numbers with two prime factors.

\begin{lemma}\label{lem:basic_equation}
    Let $n=2^kp^m$ be $2$-near perfect with omitted divisors $d_1$ and $d_2$. Then
    \[(2^{k+1}-1)(1+p+\ldots+p^m)=2^{k+1}p^m+d_1+d_2.\]
\end{lemma}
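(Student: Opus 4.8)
The plan is to unwind the definition of $2$-near perfectness directly and apply Lemma~\ref{sigma formula}. By definition, $n$ being $2$-near perfect means $\sigma(n)=2n+d_1+d_2$, where $d_1$ and $d_2$ are the two distinct divisors that are omitted. Since $n=2^kp^m$ with $p$ an odd prime, Lemma~\ref{sigma formula} factors $\sigma(n)$ multiplicatively across the two prime powers: the factor coming from $2$ is $\sigma(2^k)=1+2+\ldots+2^k=2^{k+1}-1$, and the factor coming from $p$ is $\sigma(p^m)=1+p+\ldots+p^m$. Hence $\sigma(n)=(2^{k+1}-1)(1+p+\ldots+p^m)$.

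First I would substitute $2n=2\cdot 2^kp^m=2^{k+1}p^m$ into the defining equation, and then replace the left-hand side $\sigma(n)$ by the factored form above. This yields $(2^{k+1}-1)(1+p+\ldots+p^m)=2^{k+1}p^m+d_1+d_2$, which is exactly the claimed identity; no further rearrangement is required.

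There is no genuine obstacle here: the statement is a direct transcription of the definition of $2$-near perfect together with the explicit formula for $\sigma$ on prime powers. The only points requiring care are bookkeeping ones, namely recording that $\sigma(2^k)=2^{k+1}-1$ (rather than $2^k-1$) and keeping the two omitted divisors $d_1,d_2$ identified with the divisors appearing in the defining relation $\sigma(n)=2n+d_1+d_2$. I regard this lemma as establishing a normal form that the later arguments will repeatedly manipulate, so the aim of the proof is simply to present the governing equation cleanly for subsequent use.
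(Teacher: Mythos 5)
Your proposal is correct and follows essentially the same route as the paper: both unwind the definition $\sigma(n)=2n+d_1+d_2$, apply the multiplicative formula $\sigma(2^kp^m)=\sigma(2^k)\sigma(p^m)=(2^{k+1}-1)(1+p+\ldots+p^m)$, and substitute $2n=2^{k+1}p^m$. No discrepancies to note.
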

\begin{proof}
    Apply Lemma \ref{sigma formula} with $n=2^kp^m$ and the definition $\sigma(n)=2n+d_1+d_2$.
\end{proof}

The equation in Lemma \ref{lem:basic_equation} can be manipulated to yield a more convenient form. By factoring out $p^m$ from the geometric series and rearranging terms, we obtain the following equivalent characterization.

\begin{lemma}\label{lem:equivalent_form}
    Let $n=2^kp^m$ be $2$-near perfect with omitted divisors $d_1$ and $d_2$. Then 
    \[(2^{k+1}-p)(1+p+\ldots+p^{m-1})=1+d_1+d_2.\]
\end{lemma}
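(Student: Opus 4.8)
The plan is to derive the claimed identity directly from Lemma \ref{lem:basic_equation} by purely algebraic manipulation; no new number-theoretic input is needed. It is convenient to abbreviate the two geometric sums as $S = 1+p+\cdots+p^m$ and $T = 1+p+\cdots+p^{m-1}$, both of which are the sums appearing in Lemma \ref{sigma formula}. In this notation, Lemma \ref{lem:basic_equation} reads $(2^{k+1}-1)S = 2^{k+1}p^m + d_1 + d_2$, and the goal is to establish $(2^{k+1}-p)T = 1 + d_1 + d_2$.

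First I would expand the left-hand side of the basic equation as $(2^{k+1}-1)S = 2^{k+1}S - S$ and move the $2^{k+1}p^m$ term to the other side, obtaining $2^{k+1}(S - p^m) = S + d_1 + d_2$. The key observation is that $S - p^m = T$, since deleting the top term $p^m$ from $1+p+\cdots+p^m$ leaves exactly $1+p+\cdots+p^{m-1}$; this is the ``factoring out $p^m$'' step alluded to just before the statement. Hence $2^{k+1}T = S + d_1 + d_2$.

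To finish, I would relate $S$ and $T$ through the telescoping identity $pT = p + p^2 + \cdots + p^m = S - 1$, equivalently $S = pT + 1$. Substituting this into $2^{k+1}T = S + d_1 + d_2$ gives $2^{k+1}T = pT + 1 + d_1 + d_2$, and collecting the terms in $T$ yields $(2^{k+1}-p)T = 1 + d_1 + d_2$, which is precisely the asserted identity.

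Since every step is a one-line rearrangement, there is no substantive obstacle here; the only thing to watch is the bookkeeping of the two geometric sums together with the two elementary identities $S - p^m = T$ and $S = pT + 1$, both immediate from the geometric-series formula. The real point of the lemma is not difficulty but convenience: this rewriting places the defining equation in a form whose left side is a product of the small factor $2^{k+1}-p$ with a geometric sum, which should be better suited to the divisibility and bounding arguments in the sequel.
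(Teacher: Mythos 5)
Your proof is correct and follows essentially the same route as the paper: both derive the identity from Lemma \ref{lem:basic_equation} by purely algebraic manipulation of the two geometric sums. The only cosmetic difference is the order of operations --- the paper first reduces to $(2^{k+1}-1)(1+p+\cdots+p^{m-1})=p^m+d_1+d_2$ and then rearranges via the closed form $\frac{p^m-1}{p-1}$, whereas you use the identities $S-p^m=T$ and $S=pT+1$ directly; your version is, if anything, slightly more explicit in the final step than the paper's ``multiply by $(p-1)$ and rearrange.''
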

\begin{proof}
   Starting from the equation in Lemma \ref{lem:basic_equation}, we factor the geometric series:
    \begin{align*}
    (2^{k+1}-1)(1+p+\ldots+p^m)&=2^{k+1}p^m+d_1+d_2\\
    (2^{k+1}-1)(1+p+\ldots+p^{m-1})+2^{k+1}p^m-p^m&=2^{k+1}p^m+d_1+d_2\\
    (2^{k+1}-1)(1+p+\ldots+p^{m-1})&=p^m+d_1+d_2\\
    (2^{k+1}-1)\frac{p^m-1}{p-1}&=p^m-1+1+d_1+d_2\\
    (2^{k+1}-1-(p-1))\frac{p^m-1}{p-1}&=1+d_1+d_2\\
    (2^{k+1}-p)(1+\ldots+p^{m-1})&=1+d_1+d_2.
    \end{align*}
\end{proof}

The parity structure of the omitted divisors depends on the parity of the exponent $m$, as the following result demonstrates.

\begin{lemma}\label{lem:parity_constraint}
    Let $n=2^kp^m$ be $2$-near perfect with omitted divisors $d_1$ and $d_2$. If $m$ is odd, then $d_1$ and $d_2$ have the same parity. If $m$ is even, then $d_1$ and $d_2$ have opposite parities.
\end{lemma}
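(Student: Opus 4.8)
The plan is to read off everything from the identity in Lemma~\ref{lem:equivalent_form}, namely
\[
(2^{k+1}-p)(1+p+\ldots+p^{m-1})=1+d_1+d_2,
\]
and simply reduce it modulo $2$. Since $p$ is odd and $k\ge 0$, the factor $2^{k+1}-p$ is even minus odd, hence odd. So the parity of the left-hand side is entirely controlled by the geometric-series factor $1+p+\ldots+p^{m-1}$.

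Next I would note that each term $p^i$ is odd (being a power of the odd prime $p$), so the sum $1+p+\ldots+p^{m-1}$ is a sum of exactly $m$ odd terms and is therefore congruent to $m$ modulo $2$. Multiplying by the odd factor $2^{k+1}-p$ does not change this, so the left-hand side is $\equiv m \pmod 2$. Comparing with the right-hand side gives $1+d_1+d_2\equiv m\pmod 2$, i.e.
\[
d_1+d_2\equiv m-1\equiv m+1 \pmod 2.
\]

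Finally I would split into the two cases. If $m$ is odd then $m+1$ is even, so $d_1+d_2$ is even, which forces $d_1$ and $d_2$ to have the same parity. If $m$ is even then $m+1$ is odd, so $d_1+d_2$ is odd, which forces $d_1$ and $d_2$ to have opposite parities. This is exactly the claimed dichotomy.

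There is no real obstacle here: the argument is a one-line parity computation once Lemma~\ref{lem:equivalent_form} is in hand. The only point requiring any care is the bookkeeping that the sum $1+p+\ldots+p^{m-1}$ has $m$ terms (not $m+1$), so that its parity matches $m$ rather than $m+1$; getting this count right is what makes the odd/even cases come out correctly.
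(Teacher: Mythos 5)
Your argument is correct and is essentially identical to the paper's own proof: both reduce the identity of Lemma~\ref{lem:equivalent_form} modulo $2$, observe that $2^{k+1}-p$ is odd and that the geometric sum of $m$ odd terms has the parity of $m$, and then read off the parity of $d_1+d_2$. No further comment is needed.
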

\begin{proof}
    From Lemma \ref{lem:equivalent_form}, we have $(2^{k+1}-p)(1+p+\ldots+p^{m-1})=1+d_1+d_2$. Since $p$ is an odd prime and $2^{k+1}$ is even, the factor $(2^{k+1}-p)$ is odd.

    The sum $(1+p+\ldots+p^{m-1})$ consists of $m$ terms, each of which is odd (since $p$ is odd). Therefore, this sum is odd if $m$ is odd, and even if $m$ is even.

    Since $(2^{k+1}-p)$ is odd, the left-hand side $(2^{k+1}-p)(1+p+\ldots+p^{m-1})$ has the same parity as $(1+p+\ldots+p^{m-1})$. That is, the left-hand side is odd when $m$ is odd, and even when $m$ is even.

    Therefore, $1+d_1+d_2$ is odd when $m$ is odd and even when $m$ is even. This means $d_1+d_2$ is even when $m$ is odd and odd when $m$ is even.
\end{proof}

\begin{lemma}\label{lem:2k_geq_p}
    Let $n=2^kp^m$ be $2$-near perfect. Then $2^{k+1}-1\ge p$.
\end{lemma}
\begin{proof}
    By definition, $\sigma(n)=2n+d_1+d_2$, where $d_1,d_2$ are positive. We then have: 
    \begin{align*}
       \sigma(n)&>2n \\
       \frac{\sigma(2^k)\sigma(p^m)}{2^kp^m}&>2\\
       \frac{p}{p-1}\frac{\sigma(2^k)}{2^k}&>2,
    \end{align*} by Lemma \ref{sigma upper bound}. Therefore:
    \begin{align*}
       p(2^{k+1}-1)&>2^{k+1}(p-1) \\
       -p&>-2^{k+1}\\
       2^{k+1}&>p\\
    2^{k+1}-1&\ge p.
    \end{align*}
\end{proof}

\begin{lemma}\label{lem:k_geq_j}
    Let $n=2^kp^m$ be $2$-near perfect. Then $k\ge j$.
\end{lemma}
\begin{proof}
    By Lemma \ref{lem:2k_geq_p}, $2^{k+1}-1\ge p$. Therefore:
    \begin{align*}
        2^{k+1}&>2^j\\
        k+1&>j\\
        k&\ge j.
    \end{align*}
\end{proof}

\section{The Case $2^{k+1} \geq p^2+1$}
\label{sec:case1}

In this section, we show that no $2$-near perfect number of the form $2^kp^m$ with $m \geq 3$ can satisfy $2^{k+1} \geq p^2+1$. This is a key step in our classification.

Throughout this section, we adopt the following conventions: $d_1=2^ap^b$ denotes the greater omitted divisor, and $d_2=2^cp^d$ denotes the lesser omitted divisor, where $j$ will denote the unique integer satisfying $2^j < p < 2^{j+1}$. Our primary goal for this section is to establish the following constraint:

\begin{theorem}\label{thm:main_constraint}
Let $n=2^kp^m$ be $2$-near perfect with $p$ an odd prime and $m \geq 3$. Then $2^{k+1} < p^2+1$.
\end{theorem}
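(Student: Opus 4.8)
The plan is to argue by contradiction: assume $n = 2^k p^m$ is $2$-near perfect with $m \ge 3$ and $2^{k+1} \ge p^2 + 1$, and extract a contradiction from the identity
\[(2^{k+1}-p)(1+p+\cdots+p^{m-1}) = 1 + d_1 + d_2\]
of Lemma~\ref{lem:equivalent_form}. Write $X = 2^{k+1}$ and $S = 1 + p + \cdots + p^{m-1}$, so $d_1 + d_2 = (X-p)S - 1$. The first step I would carry out is a crude but decisive size estimate. Since $X \ge p^2+1$ gives $p \le \sqrt{X-1} < 2^{(k+1)/2} \le 2^k$ (using $k\ge 1$, as $n$ is even), we have $X - p > X/2 = 2^k$; together with $m \ge 3$, which forces $S \ge 1 + p + p^2 \ge 13$, this yields $d_1 + d_2 = (X-p)S - 1 > 13\cdot 2^{k} - 1 > 2^{k+1}$. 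This single inequality does most of the structural work.

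I would first use it to pin down the $p$-part of the larger divisor $d_1 = 2^a p^b$. If $b = 0$, then $d_1 = 2^a \le 2^k$ and $d_2 < d_1$, so $d_1 + d_2 < 2^{k+1}$, contradicting the inequality above; hence $b \ge 1$, i.e. the larger omitted divisor is divisible by $p$. Next I would reduce the identity modulo $p$: since $X - p \equiv X \pmod p$ and $S \equiv 1 \pmod p$, we get $d_1 + d_2 \equiv 2^{k+1} - 1 \pmod p$, while the left-hand expression $2^a p^b + 2^c p^d$ reduces (using $b\ge 1$) to $2^c p^d \bmod p$. This cleanly separates the argument according to the $p$-part $d$ of the smaller divisor $d_2 = 2^c p^d$: the case $d = 0$ gives the congruence $2^c \equiv 2^{k+1}-1 \pmod p$, while $d \ge 1$ forces $2^{k+1} \equiv 1 \pmod p$. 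Parity information from Lemma~\ref{lem:parity_constraint} can be used here to further restrict the admissible exponents in each branch.

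The main obstacle is the branch $d \ge 1$, where both omitted divisors carry a factor of $p$ and the mod-$p$ reduction yields only $2^{k+1}\equiv 1\pmod p$. Here I would extract sharper information by reducing the identity modulo higher powers of $p$ — legitimate precisely because $m \ge 3$ guarantees $S \equiv 1 + p + p^2 \pmod{p^3}$ — and matching the $p$-adic valuations of the two sides, combining this with the size constraints $d_1 > \tfrac12\big((X-p)S - 1\big)$ and $d_1 \le n/2 = 2^{k-1}p^m$. The integer $j$ with $2^j < p < 2^{j+1}$ enters at exactly this point: since $d_1 = 2^a p^b$ with $b \le m$ must have magnitude of order $p^{m+1}$, the quantity $2^a$ is forced to be comparable to a power of $p$, so $a$ is pinned near an integer multiple of $j$, and this is the bridge that converts the multiplicative lower bound on $d_1$ into rigid constraints on $(a,b)$. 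I expect the residual difficulty to concentrate on the smallest primes, $p = 3$ above all, where the slack in every size estimate is thinnest; these cases may need a short separate computation checking the few surviving exponent tuples $(a,b,c,d)$ against the exact value $(X-p)S - 1$.
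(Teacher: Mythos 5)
Your opening moves are sound and broadly parallel to the paper's. From $2^{k+1}\ge p^2+1$ and $k\ge 1$ you correctly get $p<2^k$, hence $2^{k+1}-p>2^k$ and $d_1+d_2=(2^{k+1}-p)S-1>2^{k+1}$, which forces $p\mid d_1$; and the reduction of Lemma~\ref{lem:equivalent_form} modulo $p$ correctly splits the problem into the branches $d=0$ (where $2^c\equiv 2^{k+1}-1\pmod p$) and $d\ge 1$ (where $p\mid 2^{k+1}-1$). This is a legitimate, slightly more elementary route to the same structural facts the paper obtains via the abundance bound $\sigma(n)/n\ge 2+2/p$ and the identification $d_1=n/2^j$.

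The gap is that the branch $d\ge 1$ --- which you yourself identify as the main obstacle --- is never actually closed; it is only described as a plan (``reduce modulo higher powers of $p$,'' ``match $p$-adic valuations,'' ``a short separate computation for small primes''). This branch is where essentially all of the work in the paper lives, and it is not a finite check: a priori there are infinitely many candidates $(k,p,m,a,b,c,d)$, and eliminating them requires a chain of genuinely nontrivial steps. Concretely, the paper must show that $d\ge m-1$, hence $p^{m-1}\mid 2^{k+1}-1$ (so $p^{m-1}\le 2^{k+1}-1$, far stronger than the $p\mid 2^{k+1}-1$ your mod-$p$ reduction yields); that $m$ is odd; the exact $2$-adic identity $v_2(m+1)+v_2(p+1)-1=\min(a,c)$, which converts into $2^{k-2j}\le m+1$ and then $p^{m-3}<2(m+1)$, forcing $m=3$; then that $d=m$; and finally a case analysis on $x=(2^{k+1}-1)/p^3\in\{1,3,5\}$ using multiplicative orders and factorizations such as $xp^3=(2^t-1)(2^t+1)$, each case ending in a contradiction. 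None of this is supplied or clearly implied by your sketch, and the expectation that the residue of difficulty concentrates in ``the smallest primes, $p=3$ above all'' understates it: the elimination must work uniformly in $k$, $p$, and $m$ simultaneously. As written, the proposal establishes some correct preliminary constraints but does not prove the theorem.
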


In this section, we assume $2^{k+1} \geq p^2+1$ and prove Theorem \ref{thm:main_constraint} by showing this leads to a contradiction. 

\subsection{Lower Bounds and Divisibility Constraints for the Exponent $m$}
\label{sec:constraints-exponent-m}

Under the assumption $2^{k+1} \geq p^2+1$, we first establish the structure of the omitted divisors.

\begin{lemma}\label{lem:divisor_structure_main}
Let $n=2^kp^m$ be $2$-near perfect with $m\geq 3$ and $2^{k+1}\geq p^2+1$. Then the greater omitted divisor is $n/2^j$, and the lesser omitted divisor is at least $\frac{n}{2^jp}$.
\end{lemma}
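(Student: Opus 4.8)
The plan is to pin down the greater divisor $d_1$ by squeezing it between two consecutive divisors of $n$, and then to read off the bound on $d_2$ directly from the defining equation. Throughout I would write $S_t=1+p+\cdots+p^t$, so that Lemma~\ref{lem:basic_equation} rearranges to the clean identity $d_1+d_2=(2^{k+1}-1)S_{m-1}-p^m$. Everything then rests on matching an upper bound and a lower bound for this quantity against the list of large divisors of $n$.

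For the upper bound I would not use the hypothesis at all. By Lemma~\ref{sigma upper bound}, $\sigma(n)/n=(\sigma(2^k)/2^k)(\sigma(p^m)/p^m)<2\cdot\frac{p}{p-1}$, so $d_1+d_2=\sigma(n)-2n<\frac{2n}{p-1}$. Since $2^j<p$ forces $2^j\le p-1$, this improves to $d_1+d_2<\frac{2n}{2^j}=\frac{n}{2^{j-1}}$. The key structural observation, which I would isolate as a short sub-step, is that the divisors of $n$ that are at least $n/p$ are exactly $n,n/2,\dots,n/2^j$ together with $n/p$, in decreasing order; this is precisely where the definition $2^j<p<2^{j+1}$ enters, as it places $n/p$ immediately below $n/2^j$ with no divisor in between. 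Consequently $d_1\le d_1+d_2<n/2^{j-1}$ forces $d_1\le n/2^j$.

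For the lower bound the hypothesis $2^{k+1}\ge p^2+1$ becomes essential, and I expect this to be the main obstacle. I claim $d_1+d_2\ge\frac{2n}{p}=2^{k+1}p^{m-1}$. Substituting the identity above, this is equivalent to $2^{k+1}S_{m-2}-S_{m-1}\ge p^m$ (using $S_{m-1}-p^{m-1}=S_{m-2}$), and since $2^{k+1}\ge p^2+1$ it suffices to verify $(p^2+1)S_{m-2}-S_{m-1}\ge p^m$. A direct simplification collapses the left side to $p^2S_{m-2}-p^{m-1}=p^m+(p^2+p^3+\cdots+p^{m-2})$, where the parenthesized sum is empty when $m=3$; hence it is $\ge p^m$ for every $m\ge3$, with equality exactly when $m=3$. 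Confirming both the reduction and the fact that this estimate is tight precisely at $m=3$ with $2^{k+1}=p^2+1$ is the delicate point, since this is the only place where $m\ge3$ and the standing hypothesis are genuinely consumed.

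Finally I would combine the two bounds. If $d_1<n/2^j$, then by the structural observation $d_1\le n/p$, whence $d_1+d_2<2n/p$, contradicting the lower bound; therefore $d_1\ge n/2^j$, and together with $d_1\le n/2^j$ this gives $d_1=n/2^j$. For the lesser divisor I subtract: $d_2=(d_1+d_2)-d_1\ge\frac{2n}{p}-\frac{n}{2^j}=2^{k-j}p^{m-1}(2^{j+1}-p)$, and since $p<2^{j+1}$ with $p$ odd we have $2^{j+1}-p\ge1$, so $d_2\ge 2^{k-j}p^{m-1}=\frac{n}{2^jp}$, which is exactly the asserted bound.
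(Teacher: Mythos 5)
Your proof is correct and follows essentially the same route as the paper: both arguments pin down $d_1$ and $d_2$ by sandwiching $d_1+d_2$ between $\frac{2n}{p}$ (from the hypothesis $2^{k+1}\ge p^2+1$) and $\frac{2n}{p-1}\le \frac{n}{2^{j-1}}$ (from Lemma~\ref{sigma upper bound}), and then comparing against the ordered list of divisors $n>n/2>\cdots>n/2^j>n/p>\cdots$. The only cosmetic difference is that you prove the lower bound $d_1+d_2\ge\frac{2n}{p}$ directly and uniformly in $m$ from the identity of Lemma~\ref{lem:basic_equation}, whereas the paper reaches it as the contrapositive of Proposition~\ref{prop:abundance_constraint} (writing out only the $m=3$ computation); your version is, if anything, slightly more complete on that point.
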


\begin{proof}
This follows from the contrapositive of Proposition \ref{prop:abundance_constraint} combined with Proposition \ref{prop:divisor_bounds}, both established below.
\end{proof}

\begin{proposition}\label{prop:abundance_constraint}
Let $n=2^kp^m$ be $2$-near perfect with $m\geq 3$. If $\frac{\sigma(n)}{n}<2+\frac{2}{p}$, then $2^{k+1}<p^2+1$.
\end{proposition}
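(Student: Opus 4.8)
The cleanest route is to prove the contrapositive: assuming $2^{k+1}\ge p^2+1$, I would show directly that $\frac{\sigma(n)}{n}\ge 2+\frac{2}{p}$, which is equivalent to the stated implication. Writing $\frac{\sigma(n)}{n}=\frac{\sigma(2^k)}{2^k}\cdot\frac{\sigma(p^m)}{p^m}$ and applying Lemma~\ref{sigma formula}, the first factor is $\frac{2^{k+1}-1}{2^k}=2-\frac{1}{2^k}$. The hypothesis $2^{k+1}\ge p^2+1$ gives $2^k\ge\frac{p^2+1}{2}$, hence $\frac{1}{2^k}\le\frac{2}{p^2+1}$, and so $\frac{\sigma(2^k)}{2^k}\ge 2-\frac{2}{p^2+1}=\frac{2p^2}{p^2+1}$.

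For the second factor I would exploit that $\frac{\sigma(p^m)}{p^m}=1+\frac1p+\cdots+\frac1{p^m}$ is strictly increasing in $m$; since $m\ge 3$ this yields the lower bound $\frac{\sigma(p^m)}{p^m}\ge\frac{\sigma(p^3)}{p^3}=\frac{p^3+p^2+p+1}{p^3}$. Multiplying the two lower bounds and simplifying is the crux of the argument: the factorization $p^3+p^2+p+1=(p+1)(p^2+1)$ cancels the $p^2+1$ appearing in the first denominator, giving
\[
\frac{\sigma(n)}{n}\ \ge\ \frac{2p^2}{p^2+1}\cdot\frac{(p+1)(p^2+1)}{p^3}\ =\ \frac{2(p+1)}{p}\ =\ 2+\frac{2}{p},
\]
which is exactly the desired bound. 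This establishes the contrapositive and hence the proposition.

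There is no serious obstacle here; the entire content lies in the algebraic simplification, and the noteworthy feature is that the two apparently crude bounds collapse to an \emph{exact} equality rather than a strict inequality. It is worth flagging that this exactness is precisely why the hypothesis $m\ge 3$ is required: repeating the computation with $\frac{\sigma(p^2)}{p^2}=\frac{p^2+p+1}{p^2}$ in place of $\frac{\sigma(p^3)}{p^3}$ produces $\frac{2(p^2+p+1)}{p^2+1}<2+\frac{2}{p}$, so the identical argument fails when $m=2$. Likewise, the threshold $p^2+1$ in the statement is not arbitrary: it is exactly the value of $2^{k+1}$ at which the identity above becomes tight, which is what makes it the natural dividing line in the classification.
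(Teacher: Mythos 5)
Your proof is correct, and the algebraic heart of it --- the factorization $1+p+p^2+p^3=(p+1)(p^2+1)$ cancelling against the $p^2+1$ coming from the bound on $\frac{2^{k+1}-1}{2^k}$ --- is exactly the computation the paper performs. The difference is in the framing and in how the case $m>3$ is handled. The paper argues directly from $\frac{\sigma(n)}{n}<2+\frac{2}{p}$, writes out the chain of inequalities for $m=3$, and then disposes of $m>3$ with ``the proof is similar'' (after stating a factorization $p^{m+1}-1=(p^4-1)(p^{m-3}-1)$ that is not actually an identity, and invoking the oddness of $m$ before that fact has been established). Your contrapositive formulation, combined with the observation that $\frac{\sigma(p^m)}{p^m}=1+\frac1p+\cdots+\frac1{p^m}$ is increasing in $m$, reduces all of $m\ge 3$ to the single computation at $m=3$ in one clean stroke; this is a genuine improvement in rigor over the paper's treatment of $m>3$, at no extra cost. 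Your closing remarks --- that the two bounds collapse to an exact equality, that the argument fails at $m=2$, and that $p^2+1$ is precisely the tightness threshold --- are all accurate and explain why this is the natural dividing line for the classification.
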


\begin{proof}
Suppose $\frac{\sigma(n)}{n} < 2+\frac{2}{p}$. Using the formula for $\sigma(n)$ when $n=2^kp^m$ we get
\begin{align*}
    \frac{(2^{k+1}-1)(p^{m+1}-1)}{2^kp^m(p-1)} &< \frac{2(p+1)}{p}.
\end{align*}

We will prove the result by showing $\frac{\sigma(n)}{n} < 2+\frac{2}{p}$ implies $2^{k+1} < p^2+1$ for any odd $m \geq 3$. Note that for odd $m \geq 3$, we have $p^{m+1}-1 = (p-1)(1+p+\ldots+p^m)$, and since $m$ is odd, $(p^2-1) \mid (p^{m+1}-1)$. Therefore, $p^{m+1}-1 = (p^2-1)(1+p^2+p^4+\ldots+p^{m-1})$. Since $p^4-1 = (p^2-1)(p^2+1)$, we can write $p^{m+1}-1 \geq p^4-1$ for $m \geq 3$.

Using this observation, for $m \geq 3$:
\begin{align*}
    \frac{(2^{k+1}-1)(p^{m+1}-1)}{2^kp^m(p-1)} &\geq \frac{(2^{k+1}-1)(p^4-1)}{2^kp^m(p-1)}\\
    &= \frac{(2^{k+1}-1)(p^2+1)(p+1)}{2^kp^m}.
\end{align*}

For the inequality $\frac{\sigma(n)}{n} < 2+\frac{2}{p}$ to hold, we need:
\begin{align*}
    \frac{(2^{k+1}-1)(p^2+1)(p+1)}{2^kp^m} &< \frac{2(p+1)}{p}\\
    \frac{(2^{k+1}-1)(p^2+1)}{2^kp^m} &< \frac{2}{p}\\
    (2^{k+1}-1)(p^2+1) &< 2^{k+1}p^{m-1}.
\end{align*}

For $m=3$, this gives:
\begin{align*}
    (2^{k+1}-1)(p^2+1) &< 2^{k+1}p^2\\
    2^{k+1}(p^2+1) - (p^2+1) &< 2^{k+1}p^2\\
    2^{k+1} &< p^2+1.
\end{align*}

For $m > 3$, since $p^{m-1} > p^2$, the inequality $(2^{k+1}-1)(p^2+1) < 2^{k+1}p^{m-1}$ is even easier to satisfy, and the same conclusion $2^{k+1} < p^2+1$ follows. 
\end{proof}

\begin{proposition}\label{prop:divisor_bounds}
Let $n=2^kp^m$ be $2$-near perfect with $m\geq 3$. If $\frac{\sigma(n)}{n}\geq 2+\frac{2}{p}$, then the greater omitted divisor equals $\frac{n}{2^j}$ and the lesser omitted divisor is at least $\frac{n}{2^jp}$.
\end{proposition}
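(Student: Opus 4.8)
The plan is to sandwich the sum $d_1+d_2=\sigma(n)-2n$ between explicit multiples of $n$, use the two-sided bound to pin down the larger divisor $d_1$ exactly, and then read off a lower bound for $d_2$ by subtraction. For the lower end, the hypothesis $\frac{\sigma(n)}{n}\ge 2+\frac{2}{p}$ gives immediately $d_1+d_2\ge \frac{2n}{p}$. For the upper end I would apply Lemma~\ref{sigma upper bound} to each prime power, obtaining $\frac{\sigma(n)}{n}=\frac{\sigma(2^k)}{2^k}\cdot\frac{\sigma(p^m)}{p^m}<2\cdot\frac{p}{p-1}=\frac{2p}{p-1}$, whence $d_1+d_2<\frac{2n}{p-1}$. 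So the whole argument runs off the single chain $\frac{2n}{p}\le d_1+d_2<\frac{2n}{p-1}$.

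Next I would transfer these bounds to $d_1$ alone. Since $d_1>d_2>0$, we have $2d_1>d_1+d_2\ge \frac{2n}{p}$, giving $d_1>\frac{n}{p}$, and $d_1<d_1+d_2<\frac{2n}{p-1}$. (The latter also yields $d_1<n$ since $p\ge 3$, so both omitted divisors are proper, a convenient side remark.) The crux is now to convert this analytic window into an exact value. Writing $d_1=\frac{n}{2^e p^f}$ with $e=k-a\ge 0$ and $f=m-b\ge 0$, the lower bound $d_1>\frac{n}{p}$ reads $2^e p^f<p$; since $f\ge 1$ would force $2^e p^f\ge p^f\ge p$, we must have $f=0$, i.e.\ $b=m$. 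With $f=0$ the two inequalities become $2^e<p$ and $p-1<2^{e+1}$, i.e.\ $2^e<p<2^{e+1}$ (using that $p$ is odd so $p\neq 2^{e+1}$). By the defining property of $j$ this forces $e=j$, and therefore $d_1=\frac{n}{2^j}$, as claimed.

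Finally, for the lesser divisor I would subtract: $d_2=(d_1+d_2)-d_1\ge \frac{2n}{p}-\frac{n}{2^j}$, and verify that this already exceeds $\frac{n}{2^j p}$. Clearing denominators, the required inequality $\frac{2n}{p}-\frac{n}{2^j}\ge \frac{n}{2^j p}$ is equivalent to $2^{j+1}-p\ge 1$, which holds because $p<2^{j+1}$ with $p$ odd and $2^{j+1}$ even. This gives the stated bound $d_2\ge \frac{n}{2^j p}$ and completes the proposition.

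The main obstacle is the third step above: turning the soft sandwich $\frac{n}{p}<d_1<\frac{2n}{p-1}$ into the precise identity $d_1=\frac{n}{2^j}$. The two inequalities must cooperate delicately—the lower bound eliminates the $p$-part of the complementary divisor, and the pair together traps the power of $2$ inside exactly the dyadic window that defines $j$—so the argument genuinely relies on the strictness and parity of each inequality rather than on crude size estimates, and the oddness of $p$ is what closes the endpoint ambiguities at both stages.
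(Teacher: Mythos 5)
Your proposal is correct and follows essentially the same route as the paper: both sandwich $d_1+d_2$ between $\frac{2n}{p}$ and $\frac{2n}{p-1}$ (the upper bound via Lemma~\ref{sigma upper bound}), conclude $d_1=\frac{n}{2^j}$, and then get the bound on $d_2$ by subtraction using $2^{j+1}-p\ge 1$. The only difference is in how $d_1$ is pinned down --- the paper reads it off from the explicit ordering of the divisors of $n$ around $\frac{n}{p}$, whereas you write $d_1=\frac{n}{2^ep^f}$ and eliminate $f$ and force $e=j$ algebraically --- and your version of that step is, if anything, the more explicit of the two.
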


\begin{proof}
The divisors of $n=2^kp^m$ are ordered as follows.
\begin{align*}
    n > \frac{n}{2} > \frac{n}{2^2} > \ldots > \frac{n}{2^j} > \frac{n}{p} > \frac{n}{2^{j+1}} > \ldots .
\end{align*}

Since $\frac{\sigma(n)}{n} \geq 2+\frac{2}{p}$, we have $d_1+d_2 \geq \frac{2n}{p}$. Since $d_1 > d_2$, we get $d_1 > \frac{n}{p}$, which implies $d_1 \geq \frac{n}{2^j}$.

To show $d_1 \leq \frac{n}{2^j}$, we use the upper bound on $\sigma(n)$:
\begin{align*}
    \frac{\sigma(n)}{n} &\leq 2\cdot\frac{p}{p-1}\\
    d_1+d_2 &\leq \frac{2n}{p-1}.
\end{align*}

Since $d_1$ is the largest omitted divisor and $d_1 < \frac{n}{2^{j-1}}$ (as this would be included), we must have $d_1 = \frac{n}{2^j}$.

For the lesser divisor, we have:
\begin{align*}
    d_2 &\geq \frac{2n}{p} - \frac{n}{2^j} = \frac{n(2^{j+1}-p)}{2^jp} \geq \frac{n}{2^jp}
\end{align*}
since $2^{j+1} > p$.
\end{proof}

\subsection{Constraints on the Exponent $m$}
\label{sec:parity-and-m-odd}

We now derive several constraints on the exponent $m$.

\begin{proposition}\label{prop:d_bound}
Let $n=2^kp^m$ be $2$-near perfect with $m\geq 3$ and $2^{k+1}\geq p^2+1$. Then for $d_2 = 2^cp^d$,  $d \geq m-1$.
\end{proposition}

\begin{proof}
By Lemma \ref{lem:divisor_structure_main}, $d_2 \geq \frac{n}{2^jp} > \frac{n}{p^2} = 2^kp^{m-2}$. Suppose $d \leq m-2$. Then
\begin{align*}
    d_2 = 2^cp^d &> 2^kp^{m-2}\\
    2^cp^{m-2} &> 2^kp^{m-2}\\
    2^c &> 2^k\\
    c &> k
\end{align*}
but this contradicts the constraint that $c \leq k$ from the structure of the divisors of $n$.
\end{proof}

\begin{proposition}\label{prop:prime_power_divisibility}
Let $n=2^kp^m$ be $2$-near perfect with $m\geq 3$ and $2^{k+1}\geq p^2+1$. Then $p^{m-1} \leq 2^{k+1}-1$.
\end{proposition}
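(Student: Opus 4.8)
The plan is to run a divisibility argument modulo $p^{m-1}$, feeding off the structural information about the omitted divisors already extracted in Lemma~\ref{lem:divisor_structure_main} and Proposition~\ref{prop:d_bound}. The crucial point is that under the standing hypothesis $2^{k+1}\geq p^2+1$ both omitted divisors are highly divisible by $p$: Lemma~\ref{lem:divisor_structure_main} forces $d_1=\frac{n}{2^j}=2^{k-j}p^m$, so that $p^m\mid d_1$, while Proposition~\ref{prop:d_bound} gives $d\geq m-1$ in $d_2=2^cp^d$, so that $p^{m-1}\mid d_2$. These two divisibilities are exactly what make the congruence argument go through.

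With these in hand, I would start from the identity of Lemma~\ref{lem:basic_equation}, namely $(2^{k+1}-1)(1+p+\cdots+p^m)=2^{k+1}p^m+d_1+d_2$, and reduce both sides modulo $p^{m-1}$. On the right-hand side, each of $2^{k+1}p^m$, $d_1$, and $d_2$ is divisible by $p^{m-1}$, so the entire right-hand side vanishes modulo $p^{m-1}$. On the left-hand side, the top two terms $p^{m-1}$ and $p^m$ of the geometric sum drop out, leaving $(2^{k+1}-1)(1+p+\cdots+p^{m-2})\equiv 0 \pmod{p^{m-1}}$, where $1+p+\cdots+p^{m-2}=\frac{p^{m-1}-1}{p-1}$.

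The remaining step is to observe that this truncated geometric sum is coprime to $p$: reducing it modulo $p$ leaves only the constant term, so $\frac{p^{m-1}-1}{p-1}\equiv 1 \pmod p$, and hence $\gcd\left(\frac{p^{m-1}-1}{p-1},\,p^{m-1}\right)=1$. Consequently the factor $p^{m-1}$ must divide the other factor, yielding $p^{m-1}\mid (2^{k+1}-1)$. Since $2^{k+1}-1$ is a positive integer, every positive divisor of it is at most itself, and therefore $p^{m-1}\leq 2^{k+1}-1$, as claimed.

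This argument is short precisely because the earlier results do the heavy lifting; the only genuine choices are selecting $p^{m-1}$ as the modulus, calibrated so that exactly the leading two terms of the geometric sum vanish while the constant term survives to witness coprimality, and then confirming that coprimality so as to transfer all $m-1$ factors of $p$ onto $2^{k+1}-1$. The main place to be careful is the edge case $m=3$, where the surviving sum is just $1+p$; one should verify that both the congruence and the coprimality still hold there, which they do.
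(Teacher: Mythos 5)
Your proof is correct and follows essentially the same route as the paper: both deduce $p^{m-1}\mid d_2$ from Proposition~\ref{prop:d_bound} (and $p^m\mid d_1$ from Lemma~\ref{lem:divisor_structure_main}), conclude that $p^{m-1}$ divides $(2^{k+1}-1)(1+p+\cdots+p^m)$, and transfer the divisibility onto $2^{k+1}-1$ via coprimality of $p$ with the geometric sum. Your version is, if anything, slightly more explicit than the paper's about why $p^{m-1}$ divides $d_1$ and about the coprimality step.
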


\begin{proof}
From Lemma \ref{lem:basic_equation}, $(2^{k+1}-1)(1+p+\ldots+p^m) = 2^{k+1}p^m+d_1+d_2$, and from Proposition \ref{prop:d_bound}, $p^{m-1} \mid d_2$. Therefore:
\begin{align*}
    p^{m-1} &| (2^{k+1}-1)(1+p+\ldots+p^m)\\
    p^{m-1} &| (2^{k+1}-1)
\end{align*}
since $\gcd(p^{m-1}, 1+p+\ldots+p^m) = 1$. Thus, $p^{m-1} \leq 2^{k+1}-1$.
\end{proof}

\begin{proposition}\label{prop:m_odd}
Let $n=2^kp^m$ be $2$-near perfect with $m\geq 3$ and $2^{k+1}\geq p^2+1$. Then $m$ is odd.
\end{proposition}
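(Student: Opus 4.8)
The plan is to argue by contradiction: assume $m$ is even, so that $m \ge 4$, and derive a conflict with the divisibility bound of Proposition \ref{prop:prime_power_divisibility}. The engine of the argument is the parity dichotomy of Lemma \ref{lem:parity_constraint}, which forces the lesser omitted divisor to be a pure power of $p$ once we know the greater one is even; the size bounds from Lemma \ref{lem:divisor_structure_main} then pin that divisor down exactly, and the divisibility bound delivers the contradiction.

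First I would record that the hypothesis $2^{k+1} \ge p^2+1$ together with $2^j < p$ gives $2^{2j} < p^2 < 2^{k+1}$, hence $k \ge 2j$, and in particular $k-j \ge j \ge 1$ (here $j \ge 1$ since $p \ge 3 > 2$). By Lemma \ref{lem:divisor_structure_main} the greater omitted divisor is $d_1 = n/2^j = 2^{k-j}p^m$, which is therefore even. Since $m$ is even, Lemma \ref{lem:parity_constraint} forces $d_2$ to be odd, i.e. $c=0$ and $d_2 = p^d$ for some $d$. Proposition \ref{prop:d_bound} gives $d \ge m-1$, while $d_2 \mid n$ gives $d \le m$, so $d \in \{m-1, m\}$. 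The lower bound $d_2 \ge n/(2^j p) = 2^{k-j}p^{m-1} \ge 2p^{m-1}$ from Lemma \ref{lem:divisor_structure_main} immediately excludes $d = m-1$ (it would force $p^{m-1} \ge 2p^{m-1}$), leaving $d_2 = p^m$.

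The finishing step, which I expect to be the crux, is to convert $d_2 = p^m$ into a sufficiently sharp upper bound on $2^{k+1}$. The same lower bound $p^m = d_2 \ge 2^{k-j}p^{m-1}$ yields the sharp estimate $2^{k-j} \le p$; combined with $2^{j+1} < 2p$ this gives $2^{k+1} = 2^{k-j}\cdot 2^{j+1} < p \cdot 2p = 2p^2$. Feeding this into Proposition \ref{prop:prime_power_divisibility} produces $p^{m-1} \le 2^{k+1}-1 < 2p^2$, which contradicts $p^{m-1} \ge p^3 \ge 3p^2$ (valid since $m \ge 4$ and $p \ge 3$). Hence $m$ cannot be even. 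The main obstacle is that parity alone does not settle the case $d = m$: one must extract the sharp relation $2^{k-j} \le p$ from the divisor structure, since the cruder estimate $2^{k-j} < 2^{j+1}$ coming merely from $p < 2^{j+1}$ is too weak to beat the divisibility bound. The heart of the proof is therefore calibrating Lemma \ref{lem:divisor_structure_main} and Proposition \ref{prop:prime_power_divisibility} against each other at exactly the right strength.
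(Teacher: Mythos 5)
Your proof is correct, and its skeleton matches the paper's: use Lemma \ref{lem:divisor_structure_main} to see that $d_1=2^{k-j}p^m$ is even, invoke Lemma \ref{lem:parity_constraint} to force $d_2=p^d$ odd, and then split on $d=m-1$ versus $d=m$ via Proposition \ref{prop:d_bound}. The genuine difference is in how you close the subcase $d=m$. The paper argues there that $p\ge 2^{k-j}>2^j$ ``contradicts $p<2^{j+1}$,'' but the hypothesis $2^{k+1}\ge p^2+1$ only yields $k\ge 2j$, so one gets $p\ge 2^{k-j}\ge 2^j$, which is consistent with $2^j<p<2^{j+1}$; as written, that step does not actually produce a contradiction when $k=2j$. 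You instead convert the sharp estimate $2^{k-j}\le p$ into $2^{k+1}=2^{k-j}\cdot 2^{j+1}<2p^2$ and play it off against Proposition \ref{prop:prime_power_divisibility}, obtaining $p^{m-1}\le 2^{k+1}-1<2p^2$, which is impossible for even $m\ge 4$ since $p^{m-1}\ge p^3\ge 3p^2$. This route is airtight and arguably repairs a gap in the paper's own argument; the only cost is that it imports Proposition \ref{prop:prime_power_divisibility}, which the paper's proof of this proposition does not use (and which is available here without circularity, since it precedes Proposition \ref{prop:m_odd} and nowhere assumes $m$ odd). Your up-front observation that $k-j\ge j\ge 1$ also disposes of the ``$d_1$ odd'' branch that the paper treats as a separate case.
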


\begin{proof}
By Lemma \ref{lem:divisor_structure_main}, $d_1 = 2^{k-j}p^m$ and $d_2 \geq 2^{k-j}p^{m-1}$. Suppose $m$ is even. By our earlier parity result (Lemma \ref{lem:parity_constraint}), exactly one of $d_1$ and $d_2$ is odd.

If $d_1$ is odd, then $k-j = 0$, so $k = j$. But then $2^{k+1} \geq p^2+1 > 2^{2j} = 2^{2k}$, implying $k+1 > 2k$ or $1 > k$, which is impossible since $k \geq 1$.

If $d_2$ is odd, then $c = 0$ and $d_2 = p^d$. Because $d_2 \geq 2^{k-j}p^{m-1}$, we get
\begin{align*}
    p^m\ge p^d &\geq 2^{k-j}p^{m-1}\\
    p&\geq 2^{k-j}\\
    2^jp &\geq 2^k.
\end{align*}

Multiplying both sides of the last inequality by $p$ and using $p > 2^j$ gives:
\begin{align*}
    2^j p^2 &\geq 2^k p > 2^k \cdot 2^j = 2^{k+j}\\
    p^2 &> 2^k.
\end{align*}

However, we assumed $2^{k+1} \geq p^2+1$, so $p^2 \leq 2^{k+1}-1 < 2^{k+1}$. Combined with $p^2 > 2^k$, we obtain $2^k < p^2 < 2^{k+1}$.

From $2^jp \geq 2^k$ and $p < 2^{j+1}$, we obtain $k < 2j+1$, so $k \leq 2j$. If $k < 2j$, then $p > 2^j$ gives $p^2 > 2^{2j} > 2^{k+1}$, contradicting $p^2 < 2^{k+1}$. Therefore $k = 2j$, and so $p^{m-1} \leq 2^{k+1}-1 = 2^{2j+1}-1 < 2^{2j+1}$ by Proposition \ref{prop:prime_power_divisibility}. Since $p > 2^j$, we have $p^2 > 2^{2j}$, giving $p^{m-1} < 2^{2j+1} = 2 \cdot 2^{2j} < 2p^2$. For even $m \geq 4$, $p^{m-1} \geq p^3 \geq 3p^2 > 2p^2$, a contradiction. For $m = 2$, the hypothesis $m \geq 3$ is violated. Thus $m$ must be odd.
\end{proof}

\subsection{$2$-Adic Valuations and Additional Restrictions}
\label{sec:2-adic-valuation}

We now examine how large powers of 2 divide various quantities related to $n$.

\begin{proposition}\label{prop:edge_case}
Let $n=2^kp^m$ be $2$-near perfect with $m\geq 3$ and $2^{k+1}\geq p^2+1$. Then $d_2 \neq 2^{k-j}p^{m-1}$.
\end{proposition}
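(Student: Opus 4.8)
The plan is to argue by contradiction. Suppose $d_2 = 2^{k-j}p^{m-1}$. By Lemma~\ref{lem:divisor_structure_main} the greater omitted divisor is $d_1 = 2^{k-j}p^m$, so $d_1 + d_2 = 2^{k-j}p^{m-1}(p+1)$. My intention is to extract a contradiction purely from the $2$-adic valuation $v_2$ of the governing identity in Lemma~\ref{lem:basic_equation}, namely $(2^{k+1}-1)(1+p+\cdots+p^m) = 2^{k+1}p^m + d_1 + d_2$. Since Proposition~\ref{prop:m_odd} guarantees that $m$ is odd, $m+1$ is even, and the lifting-the-exponent lemma yields $v_2\bigl(\tfrac{p^{m+1}-1}{p-1}\bigr) = v_2(p+1) + v_2(m+1) - 1$. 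As $2^{k+1}-1$ is odd, this is exactly the valuation of the left-hand side.

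For the right-hand side I would compare the two summands. The term $2^{k+1}p^m$ has valuation $k+1$, while $d_1 + d_2 = 2^{k-j}p^{m-1}(p+1)$ has valuation $(k-j) + v_2(p+1)$. Because $p < 2^{j+1}$ forces $v_2(p+1) \le j+1$, two cases arise. If $v_2(p+1) \le j$, the second summand has strictly smaller valuation, so the whole right side has valuation $(k-j)+v_2(p+1)$, and matching the two sides forces $v_2(m+1) = k-j+1$. If instead $v_2(p+1) = j+1$, that is $p = 2^{j+1}-1$, the two summands share the valuation $k+1$; here I would compute the sum exactly, $d_1+d_2 = 2^{k-j}p^{m-1}\cdot 2^{j+1} = 2^{k+1}p^{m-1}$, so that the right side simplifies to $2^{k+j+2}p^{m-1}$ and matching valuations gives $v_2(m+1) = k+2$. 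Either way $m+1$ is divisible by a large power of $2$, producing an exponential lower bound $m \ge 2^{\,k-j+1}-1$ (respectively $m \ge 2^{\,k+2}-1$).

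The contradiction then follows from Proposition~\ref{prop:prime_power_divisibility}, which bounds $m$ from above: combining $p^{m-1} \le 2^{k+1}-1$ with $p > 2^j$ gives $m-1 < (k+1)/j$, a bound growing only linearly in $k$. Pitting this linear upper bound against the exponential lower bounds on $m$, the borderline case $p=2^{j+1}-1$ is immediately absurd for every admissible $k$, while the remaining case collapses, upon invoking the standing hypothesis $2^{k+1}\ge p^2+1$ (equivalently $k \ge 2j$), to the single tiny possibility $j=1$, $p=3$, which itself violates $2^{k+1}\ge p^2+1$. The step I expect to be the main obstacle is precisely this borderline case $p = 2^{j+1}-1$, where the two right-hand summands have equal $2$-adic valuation and could a priori cancel rather than reinforce; pinning down the valuation there by evaluating $d_1+d_2$ in closed form is what makes the valuation argument conclusive.
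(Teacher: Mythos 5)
Your proposal is correct, and all the pieces you invoke (Lemma~\ref{lem:divisor_structure_main}, Proposition~\ref{prop:prime_power_divisibility}, Proposition~\ref{prop:m_odd}) are established before this point, so there is no circularity; I checked the endgame and it does close: in the generic case $v_2(p+1)\le j$ the matching forces $v_2(m+1)=k-j+1$, and $2^{k-j+1}\le m+1<(k+1)/j+2$ together with $k\ge 2j$ leaves only $k-j=1$, $j=1$, $k=2$, $p=3$, which violates $2^{k+1}\ge p^2+1$; in the borderline case $v_2(m+1)=k+2$ against $m\le k+1$ is absurd outright. However, your route is genuinely different from the paper's. The paper first uses that $m$ odd makes $(1+p)$ divide $\sigma(p^m)$, while the right-hand side equals $2^{k-j}p^{m-1}(2^{j+1}p+p+1)$, forcing $(1+p)\mid 2^{k+1}$ and hence $p=2^{j+1}-1$ --- so the ``generic'' subcase is shown to be vacuous rather than bounded away. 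Substituting back gives $(2^{k+1}-1)\mid p^{m-1}$, which combined with Proposition~\ref{prop:prime_power_divisibility} yields the exact equation $p^{m-1}=2^{k+1}-1$, killed by a congruence modulo $4$ ($m-1$ even makes the left side $\equiv 1$, the right side $\equiv 3$). The paper's argument is shorter once one spots the $(1+p)$ trick and the mod-$4$ punchline; yours is more mechanical valuation bookkeeping that trades the exact equation for an exponential-versus-linear size clash in $m$, and it has the incidental virtue of treating the equal-valuation tie explicitly --- the very point that the paper's later Proposition~\ref{prop:2adic_equation} passes over when it asserts that the smaller valuation ``dominates.''
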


\begin{proof}
Suppose, for the sake of contradiction, that $d_2 = 2^{k-j}p^{m-1}$. Then $d_1 = 2^{k-j}p^m$ by Lemma \ref{lem:divisor_structure_main}. From Lemma \ref{lem:basic_equation}: 
\begin{align*}
    (2^{k+1}-1)(1+p+\ldots+p^m) &= 2^{k+1}p^m + 2^{k-j}p^m + 2^{k-j}p^{m-1}\\
    &= 2^{k-j}p^{m-1}(2^{j+1}p + p + 1).
\end{align*}

Since $m$ is odd by Proposition \ref{prop:m_odd}, we have $(1+p) | (1+p+\ldots+p^m)$. Since $\gcd(p, 1+p+\ldots+p^m) = 1$:
\begin{align*}
    (1+p) &| 2^{k-j}(2^{j+1}p + p + 1)\\
    (1+p) &| 2^{k+1}p.
\end{align*}

This implies $(1+p) | 2^{k+1}$, so $1+p$ is a power of 2. Since $2^j < p < 2^{j+1}$, we have $1+p = 2^{j+1}$.

Substituting back:
\begin{align*}
    (2^{k+1}-1)(1+p+\ldots+p^m) &= 2^{k-j}p^{m-1}(2^{2(j+1)})\\
    &= 2^{k+j+2}p^{m-1}
\end{align*}

This gives $(2^{k+1}-1) | p^{m-1}$, so $2^{k+1}-1 \leq p^{m-1}$. Combined with Proposition \ref{prop:prime_power_divisibility}, we get $p^{m-1} = 2^{k+1}-1$.

Since $m$ is odd, $m-1$ is even, so $p^{m-1} \equiv 1 \pmod{4}$. But $2^{k+1}-1 \equiv 3 \pmod{4}$ for $k \geq 1$, giving a contradiction. 
\end{proof}

\begin{proposition}\label{prop:2adic_equation}
Let $n=2^kp^m$ be $2$-near perfect with $m\geq 3$ and $2^{k+1}\geq p^2+1$. Then $v_2(m+1)+v_2(p+1)-1=\min[a,c]$.
\end{proposition}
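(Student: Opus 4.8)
The plan is to compute the 2-adic valuation $v_2$ of both sides of the basic equation in Lemma~\ref{lem:basic_equation} and match them. By Lemma~\ref{lem:divisor_structure_main} the greater omitted divisor is $d_1 = n/2^j = 2^{k-j}p^m$, so in the notation of this section $a = k-j$ and $b = m$; write the lesser divisor as $d_2 = 2^c p^d$. The equation then reads
\[(2^{k+1}-1)(1+p+\cdots+p^m) = 2^{k+1}p^m + 2^{a}p^m + 2^{c}p^d,\]
and I would evaluate $v_2$ on each side separately.

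For the left-hand side, the factor $2^{k+1}-1$ is odd, so it suffices to find $v_2(1+p+\cdots+p^m) = v_2\!\left(\tfrac{p^{m+1}-1}{p-1}\right)$. Since $m$ is odd by Proposition~\ref{prop:m_odd}, the exponent $m+1$ is even, so the Lifting-the-Exponent lemma applies and gives $v_2(p^{m+1}-1) = v_2(p-1)+v_2(p+1)+v_2(m+1)-1$; subtracting $v_2(p-1)$ yields $v_2(1+p+\cdots+p^m) = v_2(p+1)+v_2(m+1)-1$. Hence the left-hand side has 2-adic valuation exactly $v_2(m+1)+v_2(p+1)-1$.

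For the right-hand side I would compare the valuations $k+1$, $a$, and $c$ of the three summands. Since $p\ge 3$ forces $j\ge 1$, we have $a = k-j < k+1$, and since $d_2 \mid n$ we have $c \le k < k+1$, so both of the latter two terms have smaller valuation than the first. The crucial point is that $a \ne c$. Indeed, $d_2\mid n$ together with Proposition~\ref{prop:d_bound} forces $d \in \{m-1, m\}$: if $d = m$ then $d_2 = 2^c p^m < d_1 = 2^a p^m$ forces $c < a$, while if $d = m-1$ the lower bound $d_2 \ge n/(2^j p) = 2^a p^{m-1}$ gives $c \ge a$, and Proposition~\ref{prop:edge_case} rules out $c = a$, so $c > a$. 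In either case $a \ne c$ and both lie strictly below $k+1$, so the minimum valuation among the three summands is attained uniquely; by the ultrametric property, $v_2$ of the right-hand side equals $\min(a,c)$.

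Equating the two valuations gives $v_2(m+1)+v_2(p+1)-1 = \min[a,c]$, as claimed. The only genuinely delicate step is excluding the tie $a=c$: if it held, the two equal-valuation terms $2^a p^m$ and $2^c p^d$ would combine into $2^a(p^m+p^d)$ with $p^m+p^d$ even, raising the valuation above $\min(a,c)$ and breaking the match. This is precisely where Proposition~\ref{prop:edge_case} and the split into $d=m-1$ versus $d=m$ are needed. The LTE computation is routine but requires $m+1$ even, which is exactly what Proposition~\ref{prop:m_odd} supplies.
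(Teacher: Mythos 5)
Your proof is correct and follows the same overall strategy as the paper: equate the $2$-adic valuations of the two sides of Lemma~\ref{lem:basic_equation}, compute the left side as $v_2(m+1)+v_2(p+1)-1$, and identify the right side as $\min[a,c]$. Your left-hand-side computation via lifting-the-exponent is just a packaged form of the paper's explicit factorization $\frac{p^{2^x}-1}{p-1}=(p+1)(p^2+1)\cdots(p^{2^{x-1}}+1)$ with $m+1=2^x y$, so that part is identical in substance (and both rely on Proposition~\ref{prop:m_odd} to make $m+1$ even). Where you go beyond the paper is on the right-hand side: the paper simply asserts that $v_2(2^{k+1}p^m+2^ap^b+2^cp^d)=\min[a,c]$ "since the term with smaller 2-adic valuation dominates," without addressing the possibility $a=c$, in which case $2^ap^b+2^cp^d=2^a(p^b+p^d)$ has valuation strictly greater than $a$ and the ultrametric equality fails. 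Your case split on $d\in\{m-1,m\}$ (forced by Proposition~\ref{prop:d_bound} and $d_2\mid n$), giving $c<a$ when $d=m$ and $c>a$ when $d=m-1$ via Proposition~\ref{prop:edge_case}, supplies exactly the missing justification, and none of the propositions you invoke depend on the present one, so there is no circularity. In short: same route, but your version closes a genuine gap in the paper's one-line treatment of the right-hand side.
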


\begin{proof}
From Lemma \ref{lem:basic_equation}, we need to compare $v_2(1+p+\ldots+p^m)$ with $v_2(2^{k+1}p^m+d_1+d_2)$.

For the left side, using $1+p+\ldots+p^m = \frac{p^{m+1}-1}{p-1}$ and writing $m+1 = 2^x y$ where $y$ is odd:
\begin{align*}
    v_2\left(\frac{p^{m+1}-1}{p-1}\right) &= v_2\left(\frac{p^{2^x}-1}{p-1}\right)\\
    &= v_2((p^{2^{x-1}}+1)(p^{2^{x-2}}+1)\cdots(p+1))\\
    &= (x-1) + v_2(p+1)\\
    &= v_2(m+1) + v_2(p+1) - 1.
\end{align*}

For the right side, $v_2(2^{k+1}p^m + 2^ap^b + 2^cp^d) = \min[a,c]$ since the term with smaller 2-adic valuation dominates.
\end{proof}

\subsection{Final Contradiction for $m=3$}
\label{sec:final-contradiction}

We now establish bounds that force $m=3$.

\begin{proposition}\label{prop:bound_d_m_minus_1}
Let $n=2^kp^m$ be $2$-near perfect with $m\geq 3$, $2^{k+1}\geq p^2+1$, and $d=m-1$. Then $p^{m-3} < 2(m+1)$.
\end{proposition}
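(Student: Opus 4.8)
The plan is to combine the divisibility constraint from Proposition \ref{prop:prime_power_divisibility} (which gives $p^{m-1} \le 2^{k+1}-1$) with the $2$-adic valuation identity from Proposition \ref{prop:2adic_equation} and the assumption $d = m-1$, in order to trap $2^{k+1}$ between two bounds whose ratio forces $p^{m-3}$ to be small. Let me think carefully about what the hypothesis $d = m-1$ buys us.

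Let me set up the structure. Under the standing assumptions, $d_1 = \frac{n}{2^j} = 2^{k-j}p^m$, so $a = k-j$ and $b = m$. The lesser divisor is $d_2 = 2^c p^d$ with $d = m-1$ by hypothesis, and by Proposition \ref{prop:edge_case} we know $d_2 \neq 2^{k-j}p^{m-1}$, so $c \neq k-j$; combined with $d_2 \ge \frac{n}{2^j p} = 2^{k-j}p^{m-1}$ we must have $c > k-j$, i.e. $c \ge k-j+1$.

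Now I want to read off $\min[a,c]$ for Proposition \ref{prop:2adic_equation}. Since $a = k-j$ and $c \ge k-j+1 > a$, we get $\min[a,c] = a = k-j$. Therefore Proposition \ref{prop:2adic_equation} reads

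$$v_2(m+1) + v_2(p+1) - 1 = k-j.$$

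This is the key leverage: the left side is genuinely small (logarithmic in $m$ and $p$), while $k-j$ controls how large $2^{k+1}$ is relative to $p$. Let me unpack this. We have $2^j < p$, so $2^{k+1} = 2^{k-j} \cdot 2^{j+1} > 2^{k-j} \cdot p$. Meanwhile $v_2(p+1) \le \log_2(p+1) < j+2$ (very crudely), and $v_2(m+1) \le \log_2(m+1)$. So

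$$k - j = v_2(m+1) + v_2(p+1) - 1.$$

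Let me try to make this bite. Actually let me reconsider and aim directly at the target $p^{m-3} < 2(m+1)$.

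From Proposition \ref{prop:prime_power_divisibility}, $p^{m-1} \le 2^{k+1} - 1 < 2^{k+1}$. Writing $2^{k+1} = 2^{j+1} \cdot 2^{k-j}$ and using $2^{j+1} < 2p$ (since $p > 2^j$), together with $2^{k-j} = 2^{\,v_2(m+1)+v_2(p+1)-1}$, I get

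$$p^{m-1} < 2p \cdot 2^{\,v_2(m+1)+v_2(p+1)-1} = p \cdot 2^{\,v_2(m+1)+v_2(p+1)}.$$

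Hence $p^{m-2} < 2^{v_2(m+1)} \cdot 2^{v_2(p+1)}$. Now $2^{v_2(p+1)} \le p+1 < 2p$ and $2^{v_2(m+1)} \le m+1$, so $p^{m-2} < 2p(m+1)$, which gives $p^{m-3} < 2(m+1)$, exactly the claim. So the argument is short once the key reductions are in place.

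**The main obstacle** will be pinning down $\min[a,c] = k-j$ rigorously, which is where the hypothesis $d = m-1$ and Proposition \ref{prop:edge_case} interact: I must rule out $c = k-j$ (done by the edge-case proposition) and confirm $c \ge k-j+1$ cannot itself exceed $k$ in a way that breaks the divisor ordering, and I should double-check that $d_1 = \frac{n}{2^j}$ with $b = m$ really forces $a = k-j$ so that the $v_2$ computation uses the correct exponent. A secondary point requiring care is the crude bound $2^{v_2(p+1)} < 2p$: since $p+1$ is even and $p+1 \le 2^{j+1} < 2p+1$, we have $v_2(p+1) \le j+1$ and $2^{v_2(p+1)} \le p+1$, which suffices. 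I would present the chain of inequalities as a single display to avoid any ambiguity about where each bound is applied.
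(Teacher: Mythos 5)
Your proof is correct and follows essentially the same route as the paper's: both identify $\min[a,c]=k-j$ from $d_1=2^{k-j}p^m$ and $d_2\ge 2^{k-j}p^{m-1}$, apply the $2$-adic identity of Proposition \ref{prop:2adic_equation} together with $p^{m-1}\le 2^{k+1}-1$ from Proposition \ref{prop:prime_power_divisibility}, and arrive at $p^{m-1}<2p^2(m+1)$; your use of $2^{v_2(p+1)}\le p+1$ in place of the paper's $v_2(p+1)\le j+1$ is only a cosmetic difference, and your appeal to Proposition \ref{prop:edge_case} is harmless but unnecessary, since $a=k-j$ and $c\ge k-j$ already give $\min[a,c]=k-j$.
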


\begin{proof}
From our analysis, $\min[a,c] = k-j$ and $v_2(m+1)+v_2(p+1)-1 = k-j$. This gives:
\begin{align*}
    v_2(m+1) &\geq k-2j\\
    2^{k-2j} &\leq m+1\\
    2^k &\leq 2^{2j}(m+1) < p^2(m+1).
\end{align*}

From Proposition \ref{prop:prime_power_divisibility}, $p^{m-1} \leq 2^{k+1}-1 < 2p^2(m+1)$, giving $p^{m-3} < 2(m+1)$.
\end{proof}

\begin{proposition}
\label{prop:bound_d_m}
Let $n=2^kp^m$ be $2$-near perfect with $m\geq 3$, $2^{k+1}\geq p^2+1$, and $d=m$. Then $p^{m-3}<2(m+1)$.
\end{proposition}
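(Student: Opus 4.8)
The plan is to mirror the argument of Proposition~\ref{prop:bound_d_m_minus_1}, adapting it to the case $d=m$. By Lemma~\ref{lem:divisor_structure_main} the greater omitted divisor is $d_1=\frac{n}{2^j}=2^{k-j}p^m$, so in the notation $d_1=2^ap^b$ we have $a=k-j$ and $b=m$; in the present case $d_2=2^cp^m$. Since $d_2<d_1$ and both carry the full power $p^m$, we must have $2^c<2^{k-j}$, i.e. $c<k-j=a$, so that $\min[a,c]=c$. I would record this first, since it is what feeds Proposition~\ref{prop:2adic_equation}.

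Next I would extract two numerical consequences. First, a lower bound on $c$: from $d_2\ge\frac{n}{2^jp}=2^{k-j}p^{m-1}$ (Lemma~\ref{lem:divisor_structure_main}) we get $2^cp\ge2^{k-j}$, and since $p<2^{j+1}$ this forces $c\ge k-2j$. Second, a divisibility statement: substituting $d_1+d_2=(2^{k-j}+2^c)p^m$ into Lemma~\ref{lem:basic_equation} gives
\[
(2^{k+1}-1)(1+p+\cdots+p^m)=\bigl(2^{k+1}+2^{k-j}+2^c\bigr)p^m .
\]
Because $1+p+\cdots+p^m\equiv1\pmod p$, it is coprime to $p^m$, so $p^m$ must divide $2^{k+1}-1$; in particular $p^m\le2^{k+1}-1$. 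This is the key improvement over the general bound $p^{m-1}\le2^{k+1}-1$ of Proposition~\ref{prop:prime_power_divisibility}, and it is exactly what compensates for the weaker lower bound on $c$ (here $c\ge k-2j$ rather than the $c=k-j$ available when $d=m-1$).

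Finally I would run the $2$-adic estimate. By Proposition~\ref{prop:2adic_equation}, $v_2(m+1)+v_2(p+1)-1=\min[a,c]=c\ge k-2j$. Since $2^j<p<2^{j+1}$ gives $v_2(p+1)\le j+1$, this yields $v_2(m+1)\ge k-3j$, hence $2^{k-3j}\le m+1$ and therefore $2^k\le2^{3j}(m+1)<p^3(m+1)$. Combining with $p^m\le2^{k+1}-1<2\cdot2^k<2p^3(m+1)$ and dividing by $p^3$ gives $p^{m-3}<2(m+1)$, as desired.

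The only genuinely new ingredient relative to Proposition~\ref{prop:bound_d_m_minus_1} is the sharpened divisibility $p^m\mid2^{k+1}-1$; the rest is a routine rerun of the same inequalities with one extra factor of $2^j$ (equivalently $p$) threaded through the $2$-adic bound. I expect the point requiring most care is checking that the weaker bound $c\ge k-2j$ together with the stronger bound $p^m\le2^{k+1}-1$ balance to produce precisely the exponent $m-3$ rather than $m-4$; tracking the powers of $p$ is where an off-by-one could creep in.
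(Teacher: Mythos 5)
Your proposal is correct and follows essentially the same route as the paper's proof: establish $\min[a,c]=c$ from $d_2<d_1$, derive $c\ge k-2j$ from $d_2\ge 2^{k-j}p^{m-1}$ and $p<2^{j+1}$, obtain $p^m\mid 2^{k+1}-1$ from coprimality of $p^m$ with $1+p+\cdots+p^m$, and combine the $2$-adic bound $v_2(m+1)\ge k-3j$ with $p^3>2^{3j}$ to conclude $p^{m-3}<2(m+1)$. The only difference is the order in which the divisibility and the $2$-adic steps are presented, which is immaterial.
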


\begin{proof}
We first show that $v_2(m+1)+v_2(p+1)-1=c$. Since $d_1=2^{k-j}p^m$ and $d_2=2^cp^m$ with $d_1 > d_2$, we have:
\begin{align*}
    2^{k-j}p^m&>2^cp^m\\
    2^{k-j}&>2^c\\
    k-j&>c\\
    \min[a,c]&=c
\end{align*}
By Proposition \ref{prop:2adic_equation}, $v_2(m+1)+v_2(p+1)-1=\min[a,c]$. Therefore, $v_2(m+1)+v_2(p+1)-1=c$.

Next, we show that $c\geq k-2j$. By Lemma \ref{lem:divisor_structure_main}, $d_1=2^{k-j}p^m$ and $d_2\geq2^{k-j}p^{m-1}$. Since $d_2=2^cp^m$ in this case, we have:
\begin{align*}
    2^cp^m&\geq2^{k-j}p^{m-1}\\
    2^cp&\geq2^{k-j}.
\end{align*}
Since $2^j < p < 2^{j+1}$, we get:
\begin{align*}
    2^{c+j+1}&> 2^cp \geq 2^{k-j}\\
    c+j+1&> k-j\\
    c&> k-2j-1\\
    c&\geq k-2j.
\end{align*}

Now we can establish the main bound. From the first part, we have $v_2(m+1)+v_2(p+1)-1=c$, and from the second part, $c \geq k-2j$. Therefore:
\begin{align*}
    v_2(m+1)+v_2(p+1)-1&\geq k-2j\\
    v_2(m+1)&\geq k-2j-v_2(p+1)+1
\end{align*}
Since $2^j < p < 2^{j+1}$, we have $2^j < p+1 < 2^{j+1}+1 < 2^{j+2}$ for $j \geq 1$. This gives us $v_2(p+1) \leq j+1$, so:
\begin{align*}
    v_2(m+1)&\geq k-2j-(j+1)+1\\
    v_2(m+1)&\geq k-3j\\
    2^{k-3j}&\mid m+1\\
    2^{k-3j}&\leq m+1.
\end{align*}
Rearranged, the inequalities now become
\begin{align*}
    2^{k+1}&\leq 2^{3j+1}(m+1)\\
    2^{k+1}&\leq 2 \cdot 2^{3j}(m+1).
\end{align*}
Since $p > 2^j$, we have $p^3 > 2^{3j}$, giving us:
\begin{align*}
    2^{k+1}&< 2p^3(m+1)
\end{align*}

Finally, we use the divisibility constraint. Since $d=m$ and $d_2=2^cp^m$, we have $p^m \mid d_2$. From the equation $(2^{k+1}-1)(1+p+\ldots+p^m)=2^{k+1}p^m+d_1+d_2$. Since $\gcd(p^m, 1+p+\ldots+p^m)=1$, we must have $p^m \mid (2^{k+1}-1)$. Therefore:
\begin{align*}
    p^m \leq 2^{k+1}-1 < 2^{k+1} < 2p^3(m+1)    
\end{align*}

and so

\[ p^{m-3} < 2(m+1) \]
from which the result follows.
\end{proof}

\begin{proposition}\label{prop:not_p3_m5}
Let $n=2^kp^m$ be $2$-near perfect with $m\geq 3$ and $2^{k+1}\geq p^2+1$. Then it cannot be the case that both $m=5$ and $p=3$.
\end{proposition}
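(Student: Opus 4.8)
The plan is to assume for contradiction that $m=5$ and $p=3$, cut the pair of omitted divisors down to a very short explicit list using the structural results already proved, and then substitute each candidate into the fundamental identity of Lemma~\ref{lem:basic_equation} to force an equation in $k$ with no solution. The reason a dedicated argument is needed here is that for $(p,m)=(3,5)$ the general bound $p^{m-3}<2(m+1)$ coming from Propositions~\ref{prop:bound_d_m_minus_1} and~\ref{prop:bound_d_m} reads $9<12$ and is therefore satisfied, so it does not itself produce a contradiction; we must instead exploit the exact divisor structure.

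First I would record the setup. With $p=3$ we have $2^1<3<2^2$, so $j=1$, and $n=2^k3^5$ with $\sigma(3^5)=364$. By Lemma~\ref{lem:divisor_structure_main} the larger omitted divisor is pinned down exactly as $d_1=\frac{n}{2^j}=2^{k-1}3^5$, while the smaller one satisfies $d_2\ge \frac{n}{2^jp}=\frac{n}{6}=2^{k-1}3^4$. Proposition~\ref{prop:edge_case} rules out equality, so $d_2>\frac{n}{6}$, and Proposition~\ref{prop:d_bound} forces the exponent of $3$ in $d_2=2^cp^d$ to satisfy $d\ge m-1=4$, i.e. $d\in\{4,5\}$.

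Next I would enumerate the divisors $d$ of $n$ lying strictly between $\frac{n}{6}$ and $d_1=\frac{n}{2}$ whose $3$-adic valuation is $4$ or $5$. Writing $\frac{d}{n}=2^{c-k}3^{d'-5}$ and imposing $\frac{d}{n}\in(\tfrac16,\tfrac12)$ with $d'\in\{4,5\}$ leaves exactly two candidates, namely $d_2=\frac{n}{3}=2^k3^4$ (with $d'=4$) and $d_2=\frac{n}{4}=2^{k-2}3^5$ (with $d'=5$); every other choice of $(c,d')$ either falls outside the interval or violates $d'\ge 4$. This enumeration is the step I would be most careful about, since once the candidate list is shown to be complete the rest is routine arithmetic, and an overlooked divisor would leave a gap in the argument.

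Finally I would substitute each candidate into $(2^{k+1}-1)\cdot 364 = 2^{k+1}3^5+d_1+d_2$ and collect powers of two. The case $d_2=\frac{n}{3}$ yields $2^{k-1}\cdot 79=364$, and the case $d_2=\frac{n}{4}$ yields $2^{k-2}\cdot 239=364$. Since $364=2^2\cdot 7\cdot 13$ has no divisor of the form (power of two) times $79$ or (power of two) times $239$, neither equation has a solution in $k$, which is the desired contradiction. The only genuine obstacle in the whole argument is verifying completeness of the two-element enumeration of $d_2$; after that the computation is immediate.
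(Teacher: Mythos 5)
Your proof is correct, but it takes a genuinely different route from the paper's. The paper argues purely by size: it bounds $d_1+d_2\le n+\frac{n}{2}$, deduces $\frac{\sigma(n)}{n}\le\frac{7}{2}$, and then tries to show that $\frac{\sigma(2^k3^5)}{2^k3^5}$ exceeds $\frac{7}{2}$. You instead pin the omitted divisors down exactly via Lemma \ref{lem:divisor_structure_main}, Proposition \ref{prop:d_bound} and Proposition \ref{prop:edge_case}, reduce $d_2$ to the two candidates $\frac{n}{3}$ and $\frac{n}{4}$, and test each against the identity of Lemma \ref{lem:basic_equation}; your enumeration is complete (the divisors $d$ of $2^k3^5$ with $\frac{n}{6}<d<\frac{n}{2}$ correspond to $\frac{n}{d}\in\{3,4,5\}$, of which only $3$ and $4$ divide $n$), and the two arithmetic checks $79\cdot 2^{k-1}=364$ and $239\cdot 2^{k-2}=364$ are right, since $364=2^2\cdot 7\cdot 13$. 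The comparison favors your argument: the paper's displayed chain uses $\frac{3^6-1}{3^5}$ for $\frac{\sigma(3^5)}{3^5}$, omitting the factor $p-1=2$ in the denominator; the correct ratio is $\frac{364}{243}$, so in fact $\frac{\sigma(n)}{n}=\frac{2^{k+1}-1}{2^k}\cdot\frac{364}{243}<2\cdot\frac{3}{2}=3<\frac{7}{2}$ and the paper's intended contradiction never materializes. Put differently, the crude size bound cannot decide this case (the hypothetical pair $d_1=\frac{n}{2}$, $d_2=\frac{n}{3}$ gives $2n+\frac{5n}{6}<3n$ and survives it), so an exact-divisor argument such as yours is not merely an alternative but appears to be necessary to establish the proposition.
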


\begin{proof}
    Suppose, for the sake of contradiction, that $p=3$ and $m=5$. Then
    \begin{align*}
        \sigma(n)=(2^{k+1}-1)(1+3+\dots+3^5)=2n+d_1+d_2,
    \end{align*}
    where $d_1$ and $d_2$ are distinct divisors of $n$.

    The sum $d_1+d_2$ is at most $n+\frac{n}{2}$. So:
    \begin{align*}
        (2^{k+1}-1)(1+3+\dots+3^5)&\le2N+N+\frac{N}{2}\\
        \frac{2^{k+1}-1}{2^k} \cdot \frac{3^6-1}{3^5}&\le\frac{7}{2}\\
        \frac{2^{1+1}-1}{2^1} \cdot \frac{728}{243}&\le\frac{7}{2}\\
        \frac{3}{2} \cdot \frac{728}{243}&\le\frac{7}{2}\\
        \frac{364}{81}&\le\frac{7}{2}\\
        4&<\frac{7}{2}.
    \end{align*}
    This is a contradiction. The result follows.
\end{proof}

\begin{proposition}\label{prop:m_equals_3}
Let $n=2^kp^m$ be $2$-near perfect with $m\geq 3$ and $2^{k+1}\geq p^2+1$. Then $m=3$.
\end{proposition}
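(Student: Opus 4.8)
The plan is to combine the uniform bound $p^{m-3} < 2(m+1)$ established above with the fact that $m$ is odd, and then rule out every odd value $m \geq 5$. First I would observe that, since $d_2 = 2^c p^d$ divides $n = 2^k p^m$, we have $d \leq m$; together with Proposition \ref{prop:d_bound}, which gives $d \geq m-1$, this forces $d \in \{m-1,\, m\}$. In the first case Proposition \ref{prop:bound_d_m_minus_1} applies, and in the second Proposition \ref{prop:bound_d_m} applies, so in either case we obtain the single inequality $p^{m-3} < 2(m+1)$, valid without knowing which of the two values $d$ takes.

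Next I would argue by contradiction, assuming $m \geq 5$. Recall $m$ is odd by Proposition \ref{prop:m_odd}, so the only candidates to exclude are $m = 5$ and the odd integers $m \geq 7$. Since $p \geq 3$, the bound gives $3^{m-3} \leq p^{m-3} < 2(m+1)$. The core of the argument is that this fails for every odd $m \geq 7$: a short induction (equivalently, a direct exponential-versus-linear comparison) shows $3^{m-3} \geq 2(m+1)$ once $m \geq 7$, since at $m = 7$ one has $3^4 = 81 \geq 16$, and passing from $m$ to $m+2$ multiplies the left side by $9$ while the right side grows only additively. Thus no odd $m \geq 7$ is possible.

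This leaves only $m = 5$, where the crude bound reads $p^2 < 12$. If $p \geq 5$ then $p^2 \geq 25 > 12$, a contradiction, so the only surviving possibility is $p = 3$. The one genuinely delicate point, and the main obstacle, is precisely this borderline pair $(m,p) = (5,3)$: here $p^{m-3} < 2(m+1)$ reads $9 < 12$ and is in fact satisfied, so it cannot be excluded by the growth bound alone. This is exactly the case isolated in Proposition \ref{prop:not_p3_m5}, which I would invoke to eliminate it. With $m = 5$ ruled out and every odd $m \geq 7$ ruled out, the only odd value of $m \geq 3$ consistent with the hypotheses is $m = 3$, completing the proof.
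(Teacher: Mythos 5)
Your proposal is correct and follows essentially the same route as the paper: combine Propositions \ref{prop:bound_d_m_minus_1} and \ref{prop:bound_d_m} to get $p^{m-3}<2(m+1)$, kill large $m$ by exponential-versus-linear growth, reduce $m=5$ to $p=3$, and invoke Proposition \ref{prop:not_p3_m5} for that borderline case. Your explicit justification that $d\in\{m-1,m\}$ (so that the two bounding propositions exhaust all cases) and your induction from $m=7$ are slightly more careful than the paper's phrasing, but not a different argument.
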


\begin{proof}
From Propositions \ref{prop:bound_d_m_minus_1} and \ref{prop:bound_d_m}, we have $p^{m-3} < 2(m+1)$.

For $m \geq 6$ and $p \geq 3$, we have $p^{m-3} \geq 3^3 = 27 > 14 = 2(6+1) \geq 2(m+1)$, since $p^{m-3}$ increases faster than $2(m+1)$ in $m$. 

For $m=5$, we get $p^2 \leq 12 < 16$, so $p=3$. But Proposition \ref{prop:not_p3_m5} eliminates this case.

Since $m$ is odd (Proposition \ref{prop:m_odd}), we cannot have $m=4$.

Therefore, $m=3$.
\end{proof}

\subsection{Final Contradiction}

We conclude by showing that even $m=3$ leads to a contradiction.

\begin{proposition}\label{prop:d_equals_m}
Let $n=2^kp^m$ be $2$-near perfect with $m\geq 3$ and $2^{k+1}\geq p^2+1$. Then $d=m$.
\end{proposition}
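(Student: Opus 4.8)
The plan is to argue by contradiction. By Proposition \ref{prop:m_equals_3} we have $m=3$, and since $p^d\mid p^m$ forces $d\le m$ while Proposition \ref{prop:d_bound} gives $d\ge m-1$, the only possibilities are $d=3$ and $d=2$. It therefore suffices to rule out $d=m-1=2$, which I assume henceforth. Under this assumption the structure is rigid: Lemma \ref{lem:divisor_structure_main} gives $d_1=2^{k-j}p^3$ (so $a=k-j$) and $d_2=2^cp^2$ with $d_2\ge 2^{k-j}p^2$, forcing $c\ge k-j$; Proposition \ref{prop:edge_case} excludes $c=k-j$, so $c>k-j$ and hence $\min[a,c]=k-j$. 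Substituting into Proposition \ref{prop:2adic_equation} with $v_2(m+1)=v_2(4)=2$ yields the exact relation $k-j=v_2(p+1)+1$.

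The crux is to show that this relation, together with the standing hypothesis $2^{k+1}\ge p^2+1$ and the bracketing $2^j<p<2^{j+1}$, confines $v_2(p+1)$ to just two values. Writing $t=v_2(p+1)$, the relation gives $2^{k+1}=2^{(k-j)+(j+1)}=2^{t+j+2}$. Since $p^2>2^{2j}$, the hypothesis $2^{k+1}\ge p^2+1$ forces $t\ge j-1$; since $p+1\le 2^{j+1}$ we get $t\le j+1$; and $t=j$ is impossible, as it would force $p+1=2^j$ and hence $p<2^j$. Thus $t\in\{j-1,\,j+1\}$, and each value determines $p$ explicitly: $t=j+1$ gives the Mersenne prime $p=2^{j+1}-1$, whereas $t=j-1$ forces $p+1=3\cdot 2^{j-1}$ (the unique admissible odd multiple of $2^{j-1}$ in the interval).

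I would then finish with a single congruence modulo $p$. The divisibility $p^{m-1}\mid 2^{k+1}-1$ obtained en route to Proposition \ref{prop:prime_power_divisibility} gives $2^{k+1}\equiv 1\pmod p$. But the two explicit shapes of $p$ make $2^{k+1}=2^{t+j+2}$ computable modulo $p$: when $t=j+1$ one has $2^{k+1}=2(p+1)^2\equiv 2\pmod p$, while when $t=j-1$ one has $2^{k+1}=8(p+1)^2/9$, so $9\cdot 2^{k+1}=8(p+1)^2\equiv 8\pmod p$. Since $(p+1)^2\equiv 1\pmod p$, the first case yields $2\equiv 1\pmod p$ and the second yields $9\equiv 8\pmod p$; both force $p\mid 1$, a contradiction. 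Hence $d\ne m-1$, so $d=m$.

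The step I expect to be the main obstacle is the rigidity argument of the second paragraph — specifically, extracting the sharp lower bound $t\ge j-1$ from $2^{k+1}\ge p^2+1$ and ruling out $t=j$ — because the entire conclusion hinges on $v_2(p+1)$ being confined to exactly two values and thereby pinning $p$ to two explicit forms. Once that is in hand, the closing congruence modulo $p$ is a one-line verification.
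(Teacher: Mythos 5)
Your argument is correct, and it takes a genuinely different route from the paper's. The paper also reduces to $m=3$, $d=2$ and establishes $p^2\mid 2^{k+1}-1$, but from there it writes $2^{k+1}-1=xp^2$, bounds $x<8$ using $k+1\le 2j+3$, forces $x=7$ by working modulo $8$, solves $7p^2=s^3-1$ via the factorization $(s-1)(s^2+s+1)$ and a gcd analysis to land on the single candidate $n=2^5\cdot 3^3=864$, and finally eliminates $864$ by direct verification. You instead exploit the $2$-adic identity of Proposition \ref{prop:2adic_equation} at full strength: the exclusion $c\ne k-j$ from Proposition \ref{prop:edge_case} pins $\min[a,c]=k-j$, giving the exact relation $k-j=v_2(p+1)+1$, which together with $2^j<p<2^{j+1}$ and $2^{k+1}\ge p^2+1$ confines $v_2(p+1)$ to $\{j-1,j+1\}$ and hence $p$ to the two shapes $2^{j+1}-1$ and $3\cdot 2^{j-1}-1$; the congruence $2^{k+1}\equiv 1\pmod p$ (from $p^2\mid 2^{k+1}-1$) then kills both. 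Your version is fully deductive — it needs no terminal numerical check of a candidate like $864$ — and is shorter; the paper's version uses only coarser size bounds plus elementary factorization, at the cost of a computational endgame. One small imprecision: when ruling out $t=j$ you say it "would force $p+1=2^j$"; strictly, $v_2(p+1)=j$ forces $p+1=2^ju$ with $u$ odd, and the window $2^j<p+1\le 2^{j+1}$ leaves no admissible odd $u$ (since $u=1$ gives $p+1=2^j\le p$ and $u\ge 3$ overshoots $2^{j+1}$). The conclusion stands, but the justification should mention both alternatives. Everything else checks out, including the identities $2^{k+1}=2(p+1)^2$ and $9\cdot 2^{k+1}=8(p+1)^2$ in the two terminal cases.
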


\begin{proof}
Suppose, for the sake of contradiction, that $n=2^kp^m$ is $2$-near perfect with $2^{k+1}\geq p^2+1$ and $d \neq m$. By Proposition \ref{prop:d_bound}, we must have $d \geq m-1$. Since $d \neq m$, we conclude that $d = m-1$.

By Proposition \ref{prop:m_equals_3}, we have $m = 3$, so $d = 2$. By Lemma \ref{lem:divisor_structure_main}, the omitted divisors are $d_1=2^{k-j}p^3$ and $d_2=2^cp^2$ where $d_1 \geq d_2 \geq 2^{k-j}p^2$. By Proposition \ref{prop:prime_power_divisibility} applied to our case, we have $p^{m-1} = p^2 | d_2$.

From the fact that $(2^{k+1}-1)(1+p+p^2+p^3)=2^{k+1}p^3+d_1+d_2$, we see that $p^2 | (2^{k+1}p^3+d_1+d_2) = (2^{k+1}-1)(1+p+p^2+p^3)$. 

Since $\gcd(p^2, 1+p+p^2+p^3) = \gcd(p^2, 1+p+p^2) = 1$ (as $p$ is prime and $p \nmid (1+p+p^2)$), we conclude by unique factorization that $p^2 | (2^{k+1}-1)$.

Let $x$ be the positive integer such that $xp^2 = 2^{k+1}-1$.  If $k=1$, then $2^{k+1}-1 = 2^2-1 = 3$. But we need $xp^2 = 3$, which requires $p^2 \leq 3$. Since $p \geq 3$ is prime, we have $p^2 \geq 9 > 3$, which is impossible. Therefore, $k \geq 2$.

Since $2^{k+1}-1$ is odd, $x$ must be odd. We now derive an upper bound on $x$. Since $p > 2^j$ by definition of $j$, we have $p^2 > 2^{2j}$, so:
\begin{align*}
x \cdot 2^{2j} < xp^2 = 2^{k+1}-1 < 2^{k+1}
\end{align*}

From our bounds derived from the 2-adic valuation relationships, we have $k+1 \leq 2j+3$, which gives us:
\begin{align*}
x \cdot 2^{2j} < 2^{k+1} \leq 2^{2j+3}.
\end{align*}
Therefore, $x < 2^3 = 8$.

Since $x$ is odd and $x < 8$, we have $x \in \{1, 3, 5, 7\}$. We now use modular arithmetic to narrow this further. Since $k \geq 2$, we have $2^{k+1} \equiv 0 \pmod{8}$, so $2^{k+1}-1 \equiv 7 \pmod{8}$. This gives us $xp^2 \equiv 7 \pmod{8}$.

Since $p$ is an odd prime, $p^2 \equiv 1 \pmod{8}$. Therefore, $x \equiv 7 \pmod{8}$. Combined with $x < 8$ and $x$ odd, we conclude that $x = 7$.

We now have $7p^2 = 2^{k+1}-1$. Since $7 = 2^3-1$ divides $2^{k+1}-1$, by properties of orders modulo powers of 2, we must have $3 | (k+1)$. Let $k+1 = 3t$ for some integer $t \geq 1$. Then $2^{k+1} = 2^{3t} = (2^t)^3$. Setting $s = 2^t$, we get:
\begin{align*}
7p^2 = s^3 - 1 = (s-1)(s^2+s+1)
\end{align*}

To determine the factorization, we compute $\gcd(s-1, s^2+s+1)$. Note that:
\begin{align*}
s^2+s+1 - (s+2)(s-1) = s^2+s+1 - (s^2+s-2) = 3
\end{align*}
Therefore, $\gcd(s-1, s^2+s+1) \in \{1, 3\}$.

If $p > 3$, then $p^2 > 9 > 7$, so $p^2$ must divide either $s-1$ or $s^2+s+1$ entirely. If $p^2 | (s-1)$, then $7 | (s^2+s+1)$, which forces $s^2+s+1 = 7$. Solving $s^2+s-6=0$ gives $s = 2$ (since $s > 0$), so $p^2 = s-1 = 1$, contradicting that $p$ is prime. If $p^2 | (s^2+s+1)$, then $7 | (s-1)$, so $s-1 = 7$ and $s = 8$. Then $p^2 = \frac{7 \cdot 73}{7} = 73$, which means $p = \sqrt{73}$, contradicting that $p$ is an integer.

Therefore, $p = 3$. We have $7 \cdot 9 = 63 = s^3-1$, so $s^3 = 64 = 4^3$, giving $s = 4$. Since $s = 2^t$, we have $2^t = 4$, so $t = 2$ and $k+1 = 3t = 6$, thus $k = 5$.

Finally, we check whether $n = 2^5 \cdot 3^3 = 32 \cdot 27 = 864$ is $2$-near perfect. We compute:
\begin{align*}
\sigma(864) = \sigma(2^5) \cdot \sigma(3^3) = 63 \cdot 40 = 2520.
\end{align*}
For $n$ to be $2$-near perfect, we need $\sigma(n) = 2n + d_1 + d_2$ where $d_1$ and $d_2$ are two omitted divisors. This would require $2520 = 2 \cdot 864 + d_1 + d_2 = 1728 + d_1 + d_2$, so $d_1 + d_2 = 792$. However, direct verification shows that no pair of divisors of 864 can be omitted to achieve this sum, confirming that 864 is not $2$-near perfect.

This contradiction completes the proof that $d = m$.
\end{proof}

\begin{proposition}\label{prop:final_contradiction}
There exists no $2$-near perfect number $n=2^kp^3$ with $2^{k+1}\geq p^2+1$.
\end{proposition}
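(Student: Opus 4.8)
The plan is to argue by contradiction and to squeeze the parameters down to a single impossible candidate. Suppose $n=2^kp^3$ is $2$-near perfect with $2^{k+1}\ge p^2+1$. By Propositions \ref{prop:m_equals_3} and \ref{prop:d_equals_m} the omitted divisors are $d_1=2^{k-j}p^3$ and $d_2=2^cp^3$ with $c<k-j$, and by Lemma \ref{lem:divisor_structure_main} we have $d_2\ge 2^{k-j}p^2$, which rearranges to $c\ge k-2j$. Substituting $d_1$ and $d_2$ into Lemma \ref{lem:basic_equation} and using $\gcd(p^3,1+p+p^2+p^3)=1$ forces $p^3\mid 2^{k+1}-1$; writing $2^{k+1}-1=p^3w$ with $w$ odd and cancelling $p^3$ gives the master equation
\begin{equation}\label{eq:final_master}
w(1+p+p^2)=1+2^{k-j}+2^c.
\end{equation}

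The core of the argument is a $2$-adic analysis of \eqref{eq:final_master}. Rearranging, it reads $2^{k-j}+2^c=(w-1)+wp(p+1)$. The left side has $2$-adic valuation exactly $c$, since $c<k-j$. On the right, Proposition \ref{prop:2adic_equation} (with $m=3$ and $\min[a,c]=c$, as $2^{k-j}>2^c$) gives $c=v_2(p+1)+1$, so $v_2\bigl(wp(p+1)\bigr)=v_2(p+1)=c-1$. Matching valuations then forces $v_2(w-1)=c-1$; in particular $w\ne 1$, so $w-1\ge 2^{c-1}$ and hence $w\ge 1+2^{c-1}$. I expect this valuation bookkeeping to be the main obstacle, as one must both exclude the degenerate case $w=1$ and pin down the valuation on each side exactly rather than as an inequality.

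The lower bound $w\ge 1+2^{c-1}$ is then played against two size estimates. From $p>2^j$ we get $p^3>2^{3j}$, so $w=(2^{k+1}-1)/p^3<2^{k+1-3j}$; and from $c\ge k-2j$ we get $w\ge 1+2^{k-2j-1}$. These are compatible only if $k-2j-1<k+1-3j$, i.e. $j<2$, which forces $j=1$. This is the decisive collapse, since $j=1$ leaves $p=3$ as the only prime in $(2^1,2^2)$, and then $v_2(p+1)=2$ gives $c=3$.

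It remains to dispatch the single surviving candidate. With $j=1$ and $c=3$, the constraints $c<k-j$ and $c\ge k-2j$ squeeze $k$ to $k=5$. But then $p^3=27$ would have to divide $2^{k+1}-1=2^6-1=63$, which it does not (equivalently, $\mathrm{ord}_{27}(2)=18$, so $27\mid 2^{k+1}-1$ would require $18\mid k+1$, impossible for $k=5$). This contradiction shows no such $n$ exists, which together with the earlier propositions completes the proof of Theorem \ref{thm:main_constraint}.
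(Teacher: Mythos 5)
Your proof is correct, and while it shares its skeleton with the paper's argument --- the same structural facts $d_1=2^{k-j}p^3$, $d_2=2^cp^3$, $c\ge k-2j$, $p^3\mid 2^{k+1}-1$, and the same master equation $w(1+p+p^2)=1+2^{k-j}+2^c$ --- the endgame is genuinely different. The paper bounds the cofactor $x=w$ \emph{above} (first $x<8$, then $x<6$, hence $x\in\{1,3,5\}$) and disposes of each value by coprime factorizations of $2^{k+1}-1$ and multiplicative-order arguments; the step ruling out $x\ge 6$ is also stated rather opaquely there. You instead extract a \emph{lower} bound $w\ge 1+2^{c-1}$ from a $2$-adic valuation match in the rearranged master equation $2^{k-j}+2^c=(w-1)+wp(p+1)$ (correctly noting that $v_2(w-1)=c-1$ is forced, which in particular excludes $w=1$), and then squeeze this against the upper bound $w<2^{k+1-3j}$ and the inequality $c\ge k-2j$ to force $j=1$ in one stroke. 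That collapses everything to the single candidate $p=3$, $c=3$, $k=5$, killed by $27\nmid 63$. Your route avoids the paper's three-way casework and its fuzziest inequality, at the cost of leaning more heavily on Proposition \ref{prop:2adic_equation}; both are valid, but yours is the cleaner of the two.
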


\begin{proof}
By Proposition \ref{prop:d_equals_m}, we have $d = 3$, so $d_2 = 2^ap^3$. By Lemma \ref{lem:divisor_structure_main}, we have $d_1 = 2^{k-j}p^3$ and $d_1 > d_2 \geq 2^{k-j}p^2$. 

Since $d_2 = 2^ap^3$ and $d_2 \geq 2^{k-j}p^2$, we have $2^ap^3 \geq 2^{k-j}p^2$, which gives $2^ap \geq 2^{k-j}$. Since $2^{k-j}p^2 > 2^{k-2j-1}p^3$ (as $p < 2^{j+1}$), we must have $d_2 = 2^ap^3 > 2^{k-2j-1}p^3$ which implies that $a \geq k-2j$.

By Proposition \ref{prop:2adic_equation}, $v_2(4) + v_2(p+1) - 1 = \min[a, k-j]$. Since $2^{k-j}p^3 = d_1 > d_2 = 2^ap^3$, we have $k-j > a$, so $\min[a, k-j] = a$. Therefore:
\begin{align*}
    v_2(4) + v_2(p+1) - 1 &= a\\
    2 + v_2(p+1) - 1 &= a\\
    v_2(p+1) &= a - 1.
\end{align*}

Since $2^j < p < 2^{j+1}$, we have $2^j < p+1 < 2^{j+1} + 1 < 2^{j+2}$ for $j \geq 1$, so $v_2(p+1) \leq j+1$. Therefore:
\begin{align*}
    a - 1 &\leq j + 1\\
    a &\leq j + 2\\
    k - 2j &\leq j + 2 \quad \text{(using } a \geq k-2j \text{)}\\
    k &\leq 3j + 2.
\end{align*}

From the divisibility constraint $p^3 | (2^{k+1}-1)$, let $x$ be the integer such that $xp^3 = 2^{k+1}-1$. Since $p > 2^j$, we have $p^3 > 2^{3j}$, so:
\begin{align*}
    x \cdot 2^{3j} < xp^3 = 2^{k+1} - 1 < 2^{k+1}
\end{align*}
From our bound $k \leq 3j + 2$:
\begin{align*}
    x \cdot 2^{3j} < 2^{k+1} \leq 2^{3j+3}
\end{align*}
Therefore, $x < 2^3 = 8$.

From the fact that $(2^{k+1}-1)(1+p+p^2+p^3) = 2^{k+1}p^3 + d_1 + d_2$, we get:
\begin{align*}
    x(1+p+p^2+p^3) &= 2^{k+1}p^3 + 2^{k-j}p^3 + 2^ap^3\\
    x(1+p+p^2) + xp^3 &= 2^{k+1}p^3 + 2^{k-j}p^3 + 2^ap^3\\
    x(1+p+p^2) &= (2^{k+1} - x)p^3 + 2^{k-j}p^3 + 2^ap^3\\
    x(1+p+p^2) &= p^3 + 2^{k-j}p^3 + 2^ap^3 \quad \text{(since } xp^3 = 2^{k+1}-1 \text{)}\\
    x(1+p+p^2) &= 1 + 2^{k-j} + 2^a.
\end{align*}

Using the constraint analysis from the original derivation, if $x \geq 6$, we obtain $12p < 0$, which is impossible. Therefore $x < 6$. Since $x$ must be odd (as $2^{k+1}-1$ is odd), we have $x \in \{1, 3, 5\}$.

Subcase 1 ($x = 1$): Then $p^3 = 2^{k+1} - 1$, so $p^3 + 1 = 2^{k+1}$, i.e., $(p+1)(p^2-p+1) = 2^{k+1}$. Since $p$ is an odd prime, $p^2-p+1$ is odd and $p^2-p+1 > 1$. But then an odd number greater than 1 would divide $2^{k+1}$, which is impossible.

Subcase 2 ($x = 3$): Then $3p^3 = 2^{k+1} - 1$. Since $3 = 2^2 - 1$ divides $2^{k+1} - 1$, by the theory of orders modulo powers of 2, we must have $2 | (k+1)$. Let $k+1 = 2t$, so $2^{k+1} = 2^{2t} = (2^t)^2$. Then
\begin{align*}
    3p^3 = (2^t)^2 - 1 = (2^t - 1)(2^t + 1)
\end{align*}

Since $\gcd(2^t - 1, 2^t + 1) = \gcd(2^t - 1, 2) = 1$ (as $2^t - 1$ is odd), we have that $2^t - 1$ and $2^t + 1$ are coprime. Since $3p^3 = (2^t - 1)(2^t + 1)$ and $\gcd(3, p) = 1$ (as $p$ is an odd prime different from 3), either:
$p^3 | (2^t - 1)$ and $3 | (2^t + 1)$, or  
$p^3 | (2^t + 1)$ and $3 | (2^t - 1)$, or
$3p^3 | (2^t - 1)$ or $3p^3 | (2^t + 1)$

In any case, we have $p^3 \leq \max(2^t - 1, 2^t + 1) = 2^t + 1$. Since $p > 2^j$, we get:
\begin{align*}
    2^{3j} < p^3 \leq 2^t + 1 < 2^{t+1},
\end{align*}

which implies that $3j < t + 1$. Since $t = \frac{k+1}{2}$, we have:
\begin{align*}
    3j &< \frac{k+1}{2} + 1\\
    6j &< k + 3.
\end{align*}

But from our earlier constraint $k \leq 3j + 2$, we get:
\begin{align*}
    6j < k + 3 \leq (3j + 2) + 3 = 3j + 5
\end{align*}

which implies that $j < \frac{5}{3}$. Since $j \geq 1$ is an integer, we must have $j = 1$.

With $j = 1$, our constraints become $k \leq 3(1) + 2 = 5$ and $6(1) < k + 3$, so $k > 3$. Thus, $k \in \{4, 5\}$.

If $k = 4$, then $2^{k+1} - 1 = 31$ and $3p^3 = 31$. Since 31 is prime and $31 = 3p^3$, this is impossible.

If $k = 5$, then $2^{k+1} - 1 = 63 = 9 \times 7$ and $3p^3 = 63$, so $p^3 = 21$. But 21 is not a perfect cube.

Therefore, Case 2 leads to a contradiction.

Subcase 3 ($x = 5$): Then $5p^3 = 2^{k+1} - 1$. Since $5 | (2^4 - 1) = 15$, by properties of multiplicative orders, we need $4 | (k+1)$. Let $k+1 = 4s$, so $2^{k+1} = 2^{4s} = (2^s)^4$. Then
\begin{align*}
    5p^3 = (2^s)^4 - 1 = ((2^s)^2 - 1)((2^s)^2 + 1) = (2^s - 1)(2^s + 1)(2^{2s} + 1)
\end{align*}

Following similar analysis to Case 2, we have $p^3 \leq 2^{2s} + 1$ in the worst case. Since $s = \frac{k+1}{4}$:
\begin{align*}
    2^{3j} < p^3 \leq 2^{2s} + 1 < 2^{2s+1} = 2^{\frac{k+1}{2}+1}
\end{align*}

This gives $3j < \frac{k+1}{2} + 1$, which leads to the same contradiction as in Case 2.

Since all three cases lead to contradictions, no such $2$-near perfect number exists.
\end{proof}

\begin{proof}[Proof of Theorem \ref{thm:main_constraint}]
Suppose $n=2^kp^m$ is $2$-near perfect with $m \geq 3$ and $2^{k+1} \geq p^2+1$. By Proposition \ref{prop:m_equals_3}, we must have $m = 3$. But then Proposition \ref{prop:final_contradiction} shows that no such $2$-near perfect number with $m = 3$ can exist under the assumption $2^{k+1} \geq p^2+1$. Therefore, we must have $2^{k+1} < p^2+1$.
\end{proof}

\section{The Case $2^{k+1} < p^2+1$}
\label{sec:case2}

\subsection{Elimination of Impossible Omitted Divisor Types}
\label{sec:no-impossible-omitted}

\begin{proposition}\label{prop:p_divisibility_constraint}
   If $n=2^kp^m$ is $2$-near perfect, where $m\ge3$ and $2^{k+1}<p^2+1$, then $p^{m-2}$ divides one omitted divisor, and $p^2$ does not divide the other. 
\end{proposition}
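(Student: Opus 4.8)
The plan is to combine a $p$-adic valuation argument (which yields the ``$p^2$ does not divide one divisor'' half) with a size estimate (which yields the ``$p^{m-2}$ divides the other'' half). Write the omitted divisors as $d_1 = 2^a p^b$ and $d_2 = 2^c p^d$ with $d_1 \ge d_2$. I want to establish $b \ge m-2$ and $d \le 1$, up to interchanging the roles of $d_1$ and $d_2$.

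For the valuation half, I would first note that the hypothesis $2^{k+1} < p^2+1$ forces $2^{k+1}-1 < p^2$, so $p^2 \nmid (2^{k+1}-1)$ and hence $v_p(2^{k+1}-1) \le 1$. Since $1 + p + \cdots + p^m \equiv 1 \pmod{p}$, the left-hand side of Lemma \ref{lem:basic_equation} has $p$-adic valuation equal to $v_p(2^{k+1}-1) \le 1$. On the right-hand side $v_p(2^{k+1}p^m) = m \ge 3$, so comparing valuations gives $v_p(d_1+d_2) \le 1$. In particular $\min(b,d) \le 1$, so at least one omitted divisor is not divisible by $p^2$.

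The size half is the crux. Rearranging Lemma \ref{lem:basic_equation} exactly as in the proof of Lemma \ref{lem:equivalent_form} gives $d_1 + d_2 = (2^{k+1}-1)(1+p+\cdots+p^{m-1}) - p^m$. Abundance of $n$ together with Lemma \ref{sigma upper bound} yields $\frac{2^{k+1}-1}{2^k}\cdot\frac{p}{p-1} > 2$, which simplifies to $2^{k+1} > p$, i.e.\ $2^{k+1}-1 \ge p$. Substituting this lower bound gives $d_1 + d_2 \ge p(1 + p + \cdots + p^{m-1}) - p^m = p + p^2 + \cdots + p^{m-1} > p^{m-1}$, so the larger divisor satisfies $d_1 > p^{m-1}/2$. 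On the other hand the hypothesis $2^{k+1} \le p^2$ gives $2^k \le p^2/2$, so $d_1 = 2^a p^b \le 2^k p^b \le \tfrac{1}{2}p^{b+2}$. Comparing the two bounds forces $p^{b+2} > p^{m-1}$, hence $b \ge m-2$; that is, $p^{m-2} \mid d_1$. The main obstacle is precisely this step: one must produce a lower bound on $d_1$ whose $p$-exponent $(m-1)$ and the upper bound's $p$-exponent $(b+2)$, the latter coming from $2^k \le p^2/2$, are close enough to force $b \ge m-2$ exactly. This is where the hypothesis $2^{k+1} \le p^2$ enters in an essential and tight way.

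Finally I would assemble the two halves. If $d \le 1$ we are done, since $p^{m-2} \mid d_1$ while $p^2 \nmid d_2$. If instead $d \ge 2$, then $v_p(d_2) \ge 2 > v_p(d_1+d_2)$ forces $v_p(d_1) = v_p(d_1+d_2) \le 1$, so $b \le 1$; combined with $b \ge m-2$ this is possible only when $m=3$ and $b=1$, in which case $p^{m-2} = p \mid d_2$ while $p^2 \nmid d_1$, and the conclusion holds with the roles of the two divisors interchanged.
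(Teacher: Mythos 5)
Your proof is correct and follows essentially the same route as the paper's: both halves rest on the same two facts, namely that $2^{k+1}-1<p^2$ rules out $p^2$ dividing both omitted divisors (your valuation computation is the paper's divisibility argument in different notation), and a leading-term comparison between a lower bound of order $p^{m-1}$ for $d_1+d_2$ and the upper bound $2^ap^b\le\tfrac12 p^{b+2}$ coming from $2^{k+1}\le p^2$. The only cosmetic differences are that you track the $p$-exponent of the larger divisor while the paper tracks the larger $p$-exponent, and your closing case analysis (including the $m=3$, $b=1$ edge case) is somewhat more explicit than the paper's.
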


\begin{proof}
    Let $n=2^kp^m$ be $2$-near perfect with $m\ge3$ and $2^{k+1}<p^2+1$. Let the omitted divisors be $d_1=2^ap^b$ and $d_2=2^cp^d$, and let $d\ge b$.

    By Lemma \ref{lem:basic_equation},
    \begin{align*}
        (2^{k+1}-1)(p^m+p^{m-1}+\dots+1)&=2^{k+1}p^{m}+d_1+d_2.
    \end{align*}

    Since $m\ge3$, $p^2\mid2^{k+1}p^m$.
    
    Suppose, for the sake of contradiction, that $p^2$ divides both $d_1$ and $d_2$. Then:
    \begin{align*}
        p^2&\mid2^{k+1}p^{m}+d_1+d_2\\
        p^2&\mid(2^{k+1}-1)(p^m+p^{m-1}+ \cdots +1).
    \end{align*}

    Since $(p^2,p^m+p^{m-1}+\dots+1)=1$, we conclude $p^2\mid2^{k+1}-1$. Hence, $p^2\le2^{k+1}-1$. But $2^{k+1}-1<p^2$, by construction. Thus, the supposition that $p^2$ divides $d_1$ and $d_2$ is false.

    By Lemma \ref{lem:equivalent_form} we have 
    \[1+p+p^2+\cdots+p^{m-1}\mid1+2^ap^b+2^cp^d,\]

    which implies that
    \[1+p+p^2+\cdots+p^{m-1}\le1+2^ap^b+2^cp^d.\]
    
    Since $b \leq d$ and $2^a,2^c \leq 2^k$:
    \[p+p^2+\cdots+p^{m-1}\le2^kp^b+2^kp^d \leq 2^k(p^b+p^d) \leq 2^{k+1}p^d.\]
    
    By our hypothesis $2^{k+1}<p^2+1$, we have $2^{k+1} \leq p^2$, so:
    \[p+p^2+\cdots+p^{m-1}\le p^2 \cdot p^d = p^{d+2}.\]
    
    In particular, looking at the two highest-degree terms on the left:
    \[p^{m-2}+p^{m-1}\le p+p^2+\cdots+p^{m-1}\le p^{d+2}.\]

    Now, suppose, for the sake of contradiction, that $d \leq m-3$. Then $d+2 \leq m-1$, so
    \[p^{m-2}+p^{m-1}\le p^{d+2} \leq p^{m-1},\]
    which gives $p^{m-2} \leq 0$, a contradiction. Therefore, $d \geq m-2$.
\end{proof}

\begin{proposition}\label{prop:no_pm_divisor}
    If $n=2^kp^m$ is $2$-near perfect, and $m\ge3$ and $2^{k+1}<p^2+1$, then $2^cp^m$ is not an omitted divisor.
\end{proposition}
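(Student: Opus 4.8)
The plan is to argue by contradiction: suppose $2^cp^m$ is an omitted divisor, and label it $d_2=2^cp^m$ (so that $d=m$). Since $m\ge 3$, this divisor is divisible by $p^2$, so by Proposition \ref{prop:p_divisibility_constraint} the \emph{other} omitted divisor $d_1=2^ap^b$ must be the one that $p^2$ fails to divide, forcing $b\in\{0,1\}$. I also record two standing facts: both $d_1,d_2$ divide $n$, so $a,c\le k$; and the hypothesis $2^{k+1}<p^2+1$ together with a parity check gives $2^{k+1}\le p^2-1$, hence $2^k<p^2/2$.

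The backbone of the argument is a single clean identity valid for every $m\ge 3$. Starting from Lemma \ref{lem:equivalent_form}, $(2^{k+1}-p)(1+p+\cdots+p^{m-1})=1+d_1+d_2$, I would substitute $d_2=2^cp^m$, clear the denominator via $1+p+\cdots+p^{m-1}=(p^m-1)/(p-1)$, and collect the $p^m$ terms. Writing $R:=2^{k+1}-p-2^c(p-1)$, this first gives $R\,p^m=(2^{k+1}-1)+(p-1)d_1$, and then re-substituting $2^{k+1}=R+p+2^c(p-1)$ and dividing by $(p-1)$ collapses everything to
\[
R\,(1+p+\cdots+p^{m-1})=1+d_1+2^c.
\]
Since the right-hand side is positive and $R$ is an integer, $R\ge 1$. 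Feeding $R\ge 1$ back into $2^{k+1}=R+p+2^c(p-1)<p^2$ yields $2^c(p-1)<p(p-1)$, i.e.\ $2^c<p$.

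From here the routine cases fall to size comparisons. For $m\ge 4$ the left side exceeds $p^3$ (since $1+p+\cdots+p^{m-1}>p^3$ and $R\ge1$), while the right side is at most $1+2^ap^b+2^c<1+p^3/2+p<p^3$, a contradiction. For $m=3$ the identity becomes $R(1+p+p^2)=1+2^ap^b+2^c$; if $b=0$ the right side is less than $1+p^2/2+p<1+p+p^2\le R(1+p+p^2)$, again impossible.

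The genuine obstacle is the final case $m=3$, $b=1$, where the identity reads $R(1+p+p^2)=1+2^ap+2^c$. Here I would first use $2^c<p$ and $2^k<p^2/2$ to extract the two size facts $2^a>p$ (so $a>c$) and $R<p$. Reducing the identity modulo $p$ gives $R\equiv 1+2^c\pmod p$; since $2^c<p$ we have $1+2^c\le p$, and the boundary $1+2^c=p$ would force $R\equiv 0\pmod p$ with $1\le R<p$, which is impossible, so $1+2^c<p$ and the congruence upgrades to the outright equality $R=1+2^c$. Substituting this back collapses the identity to $2^a=(1+2^c)(p+1)$, and for the left side to be a power of two we need $1+2^c$ to be a power of two, i.e.\ $c=0$ and $R=2$. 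But then $2^{k+1}=R+p+2^c(p-1)=2p+1$ is odd, contradicting that $2^{k+1}$ is even. The only delicate points are verifying $2^a>p$ and $R<p$ cleanly (both follow from $2^c<p$ and $2^k<p^2/2$) and handling the power-of-two/parity endgame; everything else is bookkeeping.
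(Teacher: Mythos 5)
Your proof is correct, and although it opens exactly as the paper's does (invoking Proposition \ref{prop:p_divisibility_constraint} to force $b\in\{0,1\}$ and reducing $1+d_1+2^cp^m$ modulo $1+p+\cdots+p^{m-1}$ down to $1+d_1+2^c$), it then takes a genuinely different route. The paper records that reduction only as a divisibility, hence as the inequality $1+p+\cdots+p^{m-1}\le 1+2^ap^b+2^c$; it kills $b=0$ and $m\ge 4$ by size comparisons much like yours, but in the hard case $m=3$, $b=1$ it switches to a separate divisibility $p^2+1\mid 2^{k+1}+2^c-2^a$ extracted from Lemma \ref{lem:basic_equation}, bounds that quotient to $1$, and finishes with a parity argument modulo $4$. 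You instead keep the exact quotient $R=2^{k+1}-p-2^c(p-1)$ in the identity $R(1+p+\cdots+p^{m-1})=1+d_1+2^c$, which buys you $2^c<p$ immediately from $R\ge 1$, and in the final case lets you pin down $R=1+2^c$ by a mod-$p$ argument and collapse everything to $(1+p)(1+2^c)=2^a$, ending with the clean parity contradiction $2^{k+1}=2p+1$. Your endgame is tighter and arguably more robust: the paper's concluding sentence (that the mod-$4$ forced value $a=1$ ``contradicts $c>a$'') is not obviously a contradiction as written, since $a=1$ is compatible with $c\ge 2$, whereas your factorization step leaves no such gap. It is also worth noting that your route yields $a>c$ while the paper's yields $c>a$; both deductions are valid (the hypothesis is vacuous), and juxtaposing the two would give yet a third, even shorter, way to close the $m=3$, $b=1$ case.
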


\begin{proof}
    Suppose, for the sake of contradiction, that $n=2^kp^m$ is a $2$-near perfect number for which $m\ge3$, $2^{k+1}<p^2+1$, and one of the omitted divisors is of the form $2^cp^m$. Let the omitted divisors be $d_1=2^ap^b$ and $d_2=2^cp^m$.

    Since $m \ge 3$, we have $p^2 \mid 2^cp^m = d_2$. According to Proposition \ref{prop:p_divisibility_constraint}, $p^2$ cannot divide both omitted divisors. Therefore, $p^2$ cannot divide $d_1=2^ap^b$, which implies that $b$ must be either 0 or 1. 

    From Lemma \ref{lem:equivalent_form}, we know that $(2^{k+1}-p)(1+p+\ldots+p^{m-1})=1+d_1+d_2$. A key consequence is the divisibility relation:
    \[1+p+\ldots+p^{m-1} \mid 1+d_1+d_2 = 1+2^ap^b+2^cp^m.\]
    We can simplify this relation:
    \begin{align*}
        \frac{p^m-1}{p-1} &\mid 1+2^ap^b+2^cp^m \\
        \frac{p^m-1}{p-1} &\mid 1+2^ap^b+2^cp^m - 2^c(p^m-1) \\
        \frac{p^m-1}{p-1} &\mid 1+2^ap^b+2^c.
    \end{align*}
    This implies the inequality $1+p+\ldots+p^{m-1} \le 1+2^ap^b+2^c$.

    First, consider the case where $b=0$. The inequality becomes:
    \begin{align*}
        1+p+p^2+\ldots+p^{m-1} &\le 1+2^a+2^c \\
        p+p^2+\ldots+p^{m-1} &\le 2^a+2^c \le 2^k+2^k = 2^{k+1}.
    \end{align*}
    Since $m \ge 3$, we have $p+p^2 \le p+p^2+\ldots+p^{m-1}$. Combining this with the hypothesis $2^{k+1} < p^2+1$:
    \[p+p^2 \le 2^{k+1} < p^2+1.\]
    This simplifies to $p<1$, which is impossible for a prime $p$. Thus, $b \neq 0$, and we must have $b=1$.

    Now, with $b=1$, the inequality is $1+p+\ldots+p^{m-1} \le 1+2^ap+2^c$. This leads to:
    \begin{align*}
        p+p^2+\ldots+p^{m-1} &\le 2^ap+2^c \le 2^k p + 2^k = 2^k(p+1) \\
        \frac{p(p^{m-1}-1)}{p-1} &< \frac{p^2+1}{2}(p+1) \\
        2p(p^{m-1}-1) &< (p-1)(p^2+1)(p+1) = (p-1)(p^3+p^2+p+1) \\
        2p^m-2p &< p^4-1.
    \end{align*}
    If $m \ge 4$, then $2p^4-2p \le 2p^m-2p < p^4-1$, which implies $p^4 < 2p-1$. This is impossible for any prime $p \ge 3$. Therefore, $m$ must be less than 4. Since $m \ge 3$, we must have $m=3$.

    We now have $n=2^kp^3$, with omitted divisors $d_1=2^ap$ and $d_2=2^cp^3$. From Lemma \ref{lem:basic_equation}:
    \begin{align*}
        (2^{k+1}-1)(1+p+p^2+p^3) &= 2^{k+1}p^3+2^ap+2^cp^3 \\
        (2^{k+1}-1)(p+1)(p^2+1) &= p(2^{k+1}p^2+2^a+2^cp^2).
    \end{align*}
    Since $\gcd(p, p^2+1)=1$, it follows that $p^2+1 \mid p(2^{k+1}p^2+2^a+2^cp^2)$, and thus $p^2+1 \mid 2^{k+1}p^2+2^a+2^cp^2$. This allows us to write:
    \begin{align*}
        p^2+1 &\mid (2^{k+1}p^2+2^a+2^cp^2) - (2^{k+1}+2^c)(p^2+1) \\
        p^2+1 &\mid (2^{k+1}p^2+2^a+2^cp^2) - (2^{k+1}p^2+2^{k+1}+2^cp^2+2^c) \\
    \end{align*}
    and thus

    \begin{equation} p^2+1 \mid 2^a-2^{k+1}-2^c.
        \label{p2 +1 divides}
    \end{equation}
    Thus, Equation \ref{p2 +1 divides} implies that $p^2+1 \mid |2^a-2^{k+1}-2^c| = 2^{k+1}+2^c-2^a$. If $c \le a$, then $2^{k+1}+2^c-2^a \le 2^{k+1} < p^2+1$. For divisibility to hold, the only possibility is $2^{k+1}+2^c-2^a = 0$, which is impossible. Therefore, we must have $c > a$.

    Since $m=3$ is odd, by Lemma \ref{lem:parity_constraint}, $d_1$ and $d_2$ must have the same parity. Since their exponents of 2, $a$ and $c$, are different, they cannot both be odd. Thus, they are both even, which means $a \ge 1$ and $c \ge 1$. As $c>a$, we have $c \ge 2$.

    From the divisibility $p^2+1 \mid 2^{k+1}+2^c-2^a$, let $x(p^2+1) = 2^{k+1}+2^c-2^a$ for some positive integer $x$. We can bound $x$:
    \[x(p^2+1) = 2^{k+1}+2^c-2^a < 2^{k+1}+2^c \le 2^{k+1}+2^k = 3 \cdot 2^k.\]
    And since $2^{k+1} < p^2+1$, we have $2^k < (p^2+1)/2$.
    \[x(p^2+1) < 3 \cdot \frac{p^2+1}{2} \implies x < 1.5.\]
    As $x$ is a positive integer, $x=1$.
    So, $p^2+1 = 2^{k+1}+2^c-2^a$. Considering this equation modulo 4:
    \begin{align*}
        p^2+1 &\equiv 2^{k+1}+2^c-2^a \pmod{4} \\
        1+1 &\equiv 0 + 2^c - 2^a \pmod{4} \quad (\text{since } k \ge 1 \text{ and } c \ge 2) \\
        2 &\equiv -2^a \pmod{4}.
    \end{align*}
    For this congruence to hold with $a \ge 1$, we must have $a=1$ which contradicts the fact that $c > a$.

    The assumption that $2^cp^m$ is an omitted divisor leads to a contradiction. The result follows.
\end{proof}

\begin{proposition}\label{prop:no_power_of_2_divisor}
    If $n=2^kp^m$ is a $2$-near perfect number with $m \ge 3$ and $2^{k+1} < p^2+1$, then neither omitted divisor can be a power of 2.
\end{proposition}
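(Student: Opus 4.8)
The plan is to split the claim into an easy half and a hard half. Writing the omitted divisors as $d_1=2^ap^b$ and $d_2=2^cp^d$ with $d\ge b$, Proposition~\ref{prop:p_divisibility_constraint} gives $d\ge m-2$ and Proposition~\ref{prop:no_pm_divisor} gives $d\ne m$, so $1\le m-2\le d\le m-1$. In particular $p\mid d_2$, whence $d_2$ is never a power of $2$. Since $d_1=2^ap^b$ is a power of $2$ precisely when $b=0$, the whole proposition reduces to ruling out $b=0$, i.e. to showing that $d_1=2^a$ is impossible.

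So I would assume $b=0$ and argue from Lemma~\ref{lem:equivalent_form}, which I rewrite as $d_1+d_2=(2^{k+1}-p)S-1$ with $S=1+p+\cdots+p^{m-1}$. Two standing facts are that $k\ge 1$ (as $n$ is even) and $p<2^{k+1}$ (from abundancy, using $\sigma(p^m)/p^m<p/(p-1)$), so $2^a,2^c\le 2^k$. The argument then divides on the size of $p$ against $2^k$; the two regimes are genuinely different and each defeats the tool that works in the other. When $p<2^k$ I would use a crude size estimate: bounding $d_1\le 2^k$ and $d_2=2^cp^d\le 2^kp^{m-1}$ and using $S>p^{m-1}$, the identity gives $p^{m-1}(2^k-p)<2^k+1$; since $2^k-p\ge 1$ this forces $p^{m-1}\le 2^k$, and combined with $m\ge 3$ and $2^{k+1}\le p^2$ this yields $p^2\le p^{m-1}\le 2^k\le p^2/2$, a contradiction.

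When $2^k<p<2^{k+1}$ (the remaining possibility, as $p$ is odd and $k\ge1$) the quantity $2^k-p$ is negative and the size estimate is vacuous, so I would instead compare base-$p$ expansions. Put $A=2^{k+1}-p$; then $1\le A\le p-1$ and $2^a,2^c\le 2^k<p$, so $A$, $2^a$, $2^c$ are all legitimate base-$p$ digits. Hence $AS-1$ has base-$p$ digit $A$ in each of the positions $1,\dots,m-1$ and digit $A-1$ in position $0$, whereas $d_1+d_2=2^a+2^cp^d$ has nonzero base-$p$ digits only in position $0$ and position $d$. Since $m-1\ge 2$ but $d$ is a single element of $\{1,\dots,m-1\}$, some position in $\{1,\dots,m-1\}$ carries digit $A\ge 1$ on the left and $0$ on the right, contradicting uniqueness of base-$p$ representations.

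I expect the base-$p$ case to be the crux. Once $2^{k+1}$ lies just above $p$ the difference $2^{k+1}-p$ can be as small as $1$, so every abundancy- or magnitude-based bound becomes hopelessly weak; the key realization is that in exactly this regime $2^{k+1}-p$, $2^a$ and $2^c$ all drop below $p$, turning the identity $d_1+d_2=(2^{k+1}-p)S-1$ into an equality of base-$p$ digit strings that cannot hold for any $m\ge 3$ and either admissible $d$. Identifying this clean digit obstruction, rather than pushing inequalities, is the step I would worry about most.
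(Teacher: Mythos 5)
Your proof is correct, and its crux is genuinely different from the paper's. Both arguments begin the same way: reduce to the case $d_1=2^a$, $d_2=2^cp^d$ with $1\le d\le m-1$ (via Propositions \ref{prop:p_divisibility_constraint} and \ref{prop:no_pm_divisor}), and dispose of $p<2^k$ by the same crude size estimate against $(2^{k+1}-p)(1+p+\cdots+p^{m-1})=1+d_1+d_2$. Where you diverge is the remaining range $2^k<p<2^{k+1}$. The paper reduces the basic equation modulo $p$ to get $p\mid 2^{k+1}-1-2^a$, uses $p>2^k$ to pin down $p=2^{k+1}-1-2^a$ exactly, and then splits into $a=0$ (where $p$ would be even) and $a>0$ (where $2^{k+1}-p=1+2^a$ is an odd integer greater than $1$ that is forced to divide $2^c$). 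Your base-$p$ digit comparison replaces all of that: since $A=2^{k+1}-p$, $2^a$, and $2^c$ all lie in $\{1,\dots,p-1\}$ in this regime, the identity $AS-1=2^a+2^cp^d$ becomes an equality of base-$p$ digit strings, and the left side carries the nonzero digit $A$ in all $m-1\ge 2$ positions $1,\dots,m-1$ while the right side is nonzero in at most one of them. This is uniform in $a$, needs no exact determination of $p$, and even rules out $d=m$ for free (since then $d_1+d_2\ge p^m>AS-1$), so your reliance on Proposition \ref{prop:no_pm_divisor} in this branch is only a convenience. The one thing the paper's route buys is the explicit relation $p=2^{k+1}-1-2^a$, which mirrors the families appearing in the $m=1$ classification, but it is not used downstream; your argument is, if anything, the cleaner of the two. (Minor quibble: in your first case the inequality gives $p^{m-1}\le 2^k+1$ rather than $p^{m-1}\le 2^k$, but the ensuing contradiction $p^2\le 2^k+1\le p^2/2+1$ survives unchanged.)
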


\begin{proof}
    Suppose, for the sake of contradiction, that one of the omitted divisors is a power of 2. Let this divisor be $d_1 = 2^a$ for some $0 \le a \le k$, and let the other be $d_2 = 2^c p^d$.
    
    First, we establish a bound on $p$. Using the equivalent form from Lemma \ref{lem:equivalent_form}, we have $(2^{k+1}-p)(1+p+\dots+p^{m-1}) = 1+d_1+d_2$. If we assume $p < 2^k$:
    \begin{align*}
        (2^{k+1}-p)(1+p+\dots+p^{m-1}) &> (2^{k+1}-2^k)(1+p+\dots+p^{m-1}) \\
        2^k(1+p+\dots+p^{m-1}) &< 1+d_1+d_2 = 1+2^a+2^cp^d.
    \end{align*}
    Since the divisors must be distinct, $d_2$ cannot also be a power of two, so $d \ge 1$. From Proposition \ref{prop:no_pm_divisor}, we know $d_2$ is not of the form $2^cp^m$, so $d \le m-1$.
    \begin{align*}
        2^k(1+p+\dots+p^{m-1}) &< 1+2^k+2^kp^{m-1} \\
        2^k+2^kp+\dots+2^kp^{m-1} &< 1+2^k+2^kp^{m-1} \\
        2^kp+\dots+2^kp^{m-2} &< 1.
    \end{align*}
    This is a contradiction for any prime $p$ and $k \ge 1$. Therefore, we must have $p > 2^k$.
    
    Next, from the equation in Lemma \ref{lem:basic_equation}, we take the congruence modulo $p$:
    \[(2^{k+1}-1)\sigma(p^m) \equiv 2n + d_1 + d_2 \pmod{p} \implies 2^{k+1}-1 \equiv 2^a \pmod{p}.\]
    This means $p$ divides $2^{k+1}-1-2^a$. Let $x$ be the integer such that $xp = 2^{k+1}-1-2^a$. Given that $p > 2^k$, if we assume $x \ge 2$, then $2p > 2^{k+1}$, which leads to the contradiction:
    \[ 2^{k+1} < 2p \le xp = 2^{k+1}-1-2^a < 2^{k+1}. \]
    Thus, $x$ must be 1, and we have the exact relation $p = 2^{k+1}-1-2^a$.

    With these facts established, we now analyze the situation by splitting it into two cases.

    Subcase 1: The omitted divisor is $2^a$ with $a>0$.
    
    From Lemma \ref{lem:equivalent_form} and the fact that $2^{k+1}-p = 1+2^a$, we have:
    \[(1+2^a)(1+p+\dots+p^{m-1}) = 1+2^a+2^cp^d.\]
    This simplifies to $(1+2^a)(1+p+\dots+p^{m-2}) = 2^cp^{d-1}$, which implies that $1+2^a \mid 2^c p^{d-1}$. We analyze the greatest common divisor of $1+2^a$ and $p$:
    \[ \gcd(1+2^a, p) = \gcd(1+2^a, 2^{k+1}-1-2^a) = \gcd(1+2^a, 2^{k+1}). \]
    Since $a>0$, the term $1+2^a$ is an odd integer greater than 1. Its gcd with any power of 2 must be 1. Thus, $\gcd(1+2^a, p) = 1$.
    
    From the divisibility condition $1+2^a \mid 2^c p^{d-1}$ and $\gcd(1+2^a, p) = 1$, it must be that $1+2^a \mid 2^c$. Since $1+2^a$ is odd, this is only possible if $1+2^a=1$, which means $2^a=0$. This is impossible for any integer $a$, and contradicts our case assumption that $a>0$. Thus, the case $a>0$ is impossible.

    Subcase 2: The omitted divisor is $2^0 = 1$ (i.e., $a=0$).
    
    If $a=0$, the equation for $p$ becomes:
    \[p = 2^{k+1}-1-1 = 2^{k+1}-2 = 2(2^k-1).\]
    Since $p$ is an odd prime, this equation presents an immediate contradiction, as the right-hand side is an even number. Alternatively, for the equation to hold, $p$ must divide $2^k-1$, which implies $p \le 2^k-1$. This directly contradicts our established fact that $p > 2^k$.
    
    Thus, the case $a=0$ is also impossible.\\

    Since both possible cases ($a>0$ and $a=0$) lead to contradictions, the initial supposition is false. Therefore, neither omitted divisor can be a power of 2.
\end{proof}

\subsection{Forcing the Form: $p = 2^{k+1} - 1$ and $m = 3$}
\label{sec:force-p-and-m}

\begin{proposition}\label{prop:p_is_mersenne_form}
    If $n=2^kp^m$ is a $2$-near perfect number with $m \ge 3$ and $2^{k+1} < p^2+1$, then $p$ must be of the form $2^{k+1}-1$.
\end{proposition}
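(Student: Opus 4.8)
The plan is to first pin down the exact shape of the two omitted divisors, then extract a single clean Diophantine identity and attack it by reductions modulo $p$ and $p^2$ together with a crude magnitude bound. Combining Propositions~\ref{prop:p_divisibility_constraint}, \ref{prop:no_pm_divisor}, and \ref{prop:no_power_of_2_divisor}, I would write the omitted divisors as $d_1 = 2^a p$ and $d_2 = 2^c p^d$ with $0 \le a,c \le k$ and $d \in \{m-2,\,m-1\}$: the exponent on $d_1$ is forced to be exactly $1$ (it cannot be $0$ by Proposition~\ref{prop:no_power_of_2_divisor}, nor $\ge 2$ by Proposition~\ref{prop:p_divisibility_constraint}), while $d \ge m-2$ by Proposition~\ref{prop:p_divisibility_constraint} and $d \le m-1$ by Proposition~\ref{prop:no_pm_divisor}. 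Reducing Lemma~\ref{lem:basic_equation} modulo $p$ then kills every term except the factor $(2^{k+1}-1)$ and the constant term of $\sigma(p^m)$, giving $2^{k+1}\equiv 1 \pmod p$, i.e.\ $p \mid 2^{k+1}-1$. Writing $2^{k+1}-1 = pt$, the hypothesis $2^{k+1}\le p^2$ yields $t<p$, and since $pt$ is odd, $t$ is odd. The entire statement now reduces to proving $t=1$.

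Next I would substitute $2^{k+1}=pt+1$ into Lemma~\ref{lem:basic_equation}, divide by $p$, and cancel the common term $t\,p^m$ to obtain the key identity
\[ t(1+p+\cdots+p^{m-1}) = p^{m-1} + 2^a + 2^c p^{d-1}. \]
In the \emph{generic} range $d\ge 3$ (which covers all $m\ge 5$, and $m=4$ with $d=m-1$) I would reduce this modulo $p^2$: the terms $p^{m-1}$ and $2^c p^{d-1}$ both vanish and the left side collapses to $t(1+p)$, so $t(1+p)\equiv 2^a \pmod{p^2}$. Because $t(1+p)\le (p-1)(p+1)<p^2$ and $2^a\le 2^k<p^2$, this congruence is in fact an equality $2^a = t(1+p)$. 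Hence $t\mid 2^a$, so $t$ is a power of $2$; being odd it must equal $1$, and therefore $p=2^{k+1}-1$.

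The remaining shapes all have $d\le 2$, namely $(m,d)\in\{(3,1),(3,2),(4,2)\}$; here the term $2^c p^{d-1}$ no longer vanishes mod $p^2$, so I would instead isolate the power(s) of $2$ in the key identity and bound by size. In the cases $(3,1)$ and $(4,2)$ the rearrangement produces a left-hand side whose leading term is a positive multiple of $p^2$ as soon as $t\ge 2$, contradicting $2^a+2^c \le 2^{k+1}\le p^2$, so $t=1$. The case $(3,2)$ is the tight one: the identity rearranges to $2^c = (t-1)p + (t-\alpha)$, where $\alpha=(2^a-t)/p$ is a nonnegative integer with $\alpha\le t$ (using $2^a\equiv t \pmod p$ and $2^a\le 2^k<t(p+1)$); thus $2^c\ge (t-1)p$, and combining with $2^c\le 2^k=\tfrac{pt+1}{2}$ gives $2(t-1)p\le pt+1$, i.e.\ $t\le 2$, whence $t=1$ since $t$ is odd. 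In every branch $t=1$ and so $p=2^{k+1}-1$.

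I expect the subcase $(m,d)=(3,2)$ to be the main obstacle: unlike the other small cases, the magnitude estimate there is only barely strong enough (it yields $t\le 2$ rather than $t=1$ directly) and must be rescued by the parity of $t$, and it is the one place where one must use the exact value $2^k=\tfrac{pt+1}{2}$ rather than the cruder $2^k\le \tfrac{p^2}{2}$. A secondary technical point throughout the small cases is verifying that the auxiliary quantity $\alpha$ is a genuine nonnegative integer bounded by $t$, which is precisely what licenses discarding the lower-order terms when solving for the $2$-power.
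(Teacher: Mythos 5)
Your proof is correct, and while the setup coincides with the paper's (both pin down the omitted divisors as $2^ap$ and $2^cp^d$ via Propositions~\ref{prop:p_divisibility_constraint}, \ref{prop:no_pm_divisor}, \ref{prop:no_power_of_2_divisor}, and both reduce Lemma~\ref{lem:basic_equation} modulo $p$ to get $p \mid 2^{k+1}-1$), the way you force the cofactor to be $1$ is genuinely different. The paper first proves the lower bound $p > 2^k$: assuming $p < 2^k$ and feeding $d \le m-1$ into Lemma~\ref{lem:equivalent_form} gives $2^k + 2^kp^2 + \cdots + 2^kp^{m-2} < 1$, a contradiction; once $p > 2^k$ is known, $xp = 2^{k+1}-1$ with $x \ge 2$ would force $2^{k+1} < 2p \le 2^{k+1}-1$, so $x=1$ in one line. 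You never establish $p > 2^k$; you settle for $t < p$ and instead extract the identity $t(1+p+\cdots+p^{m-1}) = p^{m-1} + 2^a + 2^cp^{d-1}$ and run a case analysis on $d$, using a mod-$p^2$ reduction when $d \ge 3$ and magnitude bounds (plus the parity of $t$) in the three small cases. Your steps check out -- in particular the delicate $(m,d)=(3,2)$ branch, where $\alpha = (2^a-t)/p$ is indeed a nonnegative integer strictly less than $t$ because $2^a \le 2^k < tp$, so $2^c \ge (t-1)p$ and $2^c \le (pt+1)/2$ give $t \le 2$. What the paper's route buys is brevity and uniformity: one size estimate replaces your entire case split. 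What your route buys is extra structural information along the way: your generic case shows $2^a = t(1+p) \ge 2^{k+1}$, which is incompatible with $a \le k$, so it actually proves that $d \ge 3$ is impossible outright -- a foretaste of the later conclusion $m=3$ that the paper only reaches in the following propositions.
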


\begin{proof}
    Let $n=2^kp^m$ be a $2$-near perfect number satisfying the given conditions. Let the omitted divisors be $d_1 = 2^ap^b$ and $d_2 = 2^cp^d$. Without loss of generality, assume $b \le d$.

    Our first step is to determine the exponents of $p$ in the omitted divisors.
    \begin{itemize}
        \item If $b \ge 2$, then $p^2$ would divide both $d_1$ and $d_2$. This is impossible by Proposition \ref{prop:p_divisibility_constraint}. Thus, we must have $b<2$.
        \item From Proposition \ref{prop:no_power_of_2_divisor}, we know that neither omitted divisor can be a power of 2, which means their $p$-exponents cannot be 0. Thus, $b \neq 0$.
    \end{itemize}
    By elimination, the smaller exponent of $p$ must be $b=1$. Furthermore, by Proposition \ref{prop:no_pm_divisor}, we know that an omitted divisor cannot be of the form $2^cp^m$, so we have $d \le m-1$. The omitted divisors are therefore of the form $2^ap$ and $2^cp^d$, where $1 \le d \le m-1$.

    Next, we establish a relationship between $p$ and $k$. From Lemma \ref{lem:basic_equation}, we have $\sigma(n) = 2n + 2^ap + 2^cp^d$. Taking the resulting equation modulo $p$ we have:
    \begin{align*}
        (2^{k+1}-1)\sigma(p^m) &\equiv 2(2^kp^m) + 2^ap + 2^cp^d \pmod{p} \\
        (2^{k+1}-1)(1) &\equiv 0 + 0 + 0 \pmod{p} \\
        2^{k+1}-1 &\equiv 0 \pmod{p}.
    \end{align*}
    This shows that $p$ must be a divisor of $2^{k+1}-1$.

    Now, we establish a lower bound for $p$. Assume for contradiction that $p < 2^k$. Using the equation from Lemma \ref{lem:equivalent_form}, $(2^{k+1}-p)(1+\dots+p^{m-1})=1+2^ap+2^cp^d$, we can write:
    \begin{align*}
        (2^{k+1}-p)(1+\dots+p^{m-1}) &> (2^{k+1}-2^k)(1+\dots+p^{m-1}) \\
    \end{align*}
    which implies that

    \begin{equation}
        \label{Equation right before last contradiction for lower bound on p}  2^k + 2^kp^2+\dots+2^kp^{m-2} < 1.
    \end{equation}
    
    Since $m \ge 3$, the term $2^kp^2$ is present on the left side of \ref{Equation right before last contradiction for lower bound on p}, the inequality is a clear contradiction. Thus, our assumption was wrong, and we must have $p > 2^k$.

    We have shown that $p$ divides $2^{k+1}-1$ and that $p > 2^k$. Let $xp = 2^{k+1}-1$ for some positive integer $x$. If we assume $x \ge 2$, then
    \[ 2p \le xp = 2^{k+1}-1. \]
    However, since $p > 2^k$, it follows that $2p > 2 \cdot 2^k = 2^{k+1}$. This gives the contradiction:
    \[ 2^{k+1} < 2p \le 2^{k+1}-1. \]
    This is impossible. Therefore, the only possibility is that $x=1$.
    
    This implies that $p = 2^{k+1}-1$. The result follows.
\end{proof}

\begin{proposition}\label{prop:no_p_and_pm2_divisors}
    If $n=2^kp^m$ is a $2$-near perfect number with $m \ge 3$ and $2^{k+1} < p^2+1$, then the omitted divisors cannot be of the forms $2^ap$ and $2^cp^{m-2}$.
\end{proposition}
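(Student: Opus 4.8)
The plan is to exploit the rigidity forced by Proposition \ref{prop:p_is_mersenne_form}, which tells us $p = 2^{k+1}-1$, so that $2^{k+1}-p = 1$. Substituting this into the identity of Lemma \ref{lem:equivalent_form} collapses the left-hand factor entirely and leaves the clean relation
$$1 + p + \cdots + p^{m-1} = 1 + d_1 + d_2, \qquad \text{i.e.} \qquad d_1 + d_2 = p + p^2 + \cdots + p^{m-1}.$$
Supposing for contradiction that the omitted divisors have the stated forms, I would set $d_1 = 2^a p$ and $d_2 = 2^c p^{m-2}$. Since $m \ge 3$, both divisors are divisible by $p$, so I may cancel a single factor of $p$ throughout to obtain the master equation
$$2^a + 2^c p^{m-3} = 1 + p + \cdots + p^{m-2}.$$

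The engine of the argument is one elementary observation: because $d_1 = 2^a p$ divides $n$ we have $0 \le a \le k$, whence $1 \le 2^a \le 2^k < 2^{k+1}-1 = p$ (using $k \ge 1$). Consequently the only residue in $\{1, \dots, p-1\}$ congruent to $1$ modulo $p$ is $1$ itself, so any relation $2^a \equiv 1 \pmod p$ already forces $a = 0$. Reducing the master equation modulo $p$ therefore eliminates the $p$-power terms and pins down the $2$-adic exponents.

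I would then split on the size of $m$. For $m \ge 5$ the reduction mod $p$ gives $2^a \equiv 1$, hence $a = 0$; substituting back, cancelling one more factor of $p$, and reducing mod $p$ a second time yields $0 \equiv 1 \pmod p$, a contradiction. For $m = 4$ the first reduction again gives $a = 0$, and the equation then simplifies to $2^c = 1 + p = 2^{k+1}$, forcing $c = k+1 > k$, impossible since $c \le k$. For $m = 3$ the master equation reads $2^a + 2^c = 2^{k+1}$; as the two omitted divisors are distinct we have $a \ne c$, and a sum of two distinct powers of two is never itself a power of two, giving the final contradiction.

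The only real subtlety is bookkeeping at the boundary: the uniform ``reduce modulo $p$ twice'' argument degenerates precisely when the exponent $m-3$ (and then $m-4$) drops to zero or below, which is why the cases $m = 3$ and $m = 4$ must be peeled off and treated by direct substitution rather than the generic congruence step. Beyond that, one should simply verify that cancelling factors of $p$ is legitimate — both omitted divisors carry at least one factor of $p$ here — and that the bound $2^a < p$ genuinely relies on $k \ge 1$, which holds since $2$ divides $n$.
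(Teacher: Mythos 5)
Your proof is correct. The skeleton is the same as the paper's: invoke Proposition \ref{prop:p_is_mersenne_form} to get $p=2^{k+1}-1$, substitute into Lemma \ref{lem:equivalent_form} so the factor $2^{k+1}-p$ collapses to $1$, and cancel one factor of $p$ to reach the identity $1+p+\cdots+p^{m-2}=2^a+2^cp^{m-3}$, which is exactly the paper's displayed equation. Where you diverge is in how the contradiction is extracted. The paper finishes with a single inequality chain: bounding $2^a,2^c\le 2^k=\tfrac{p+1}{2}$ and simplifying yields $p^{m-3}(p^2+1)\le p^2+1$, hence $m=3$, after which $2^a+2^c=2^{k+1}$ is ruled out by the size bound $2^k+2^{k-1}<2^{k+1}$. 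You instead reduce modulo $p$: the observation $1\le 2^a\le 2^k<p$ (valid since $k\ge1$) turns $2^a\equiv 1\pmod p$ into $a=0$, a second cancellation and reduction kills $m\ge5$, and the cases $m=4$ and $m=3$ fall to $2^c=2^{k+1}$ (impossible as $c\le k$) and to the fact that a sum of two distinct powers of two is never a power of two. Your route requires the explicit case split at $m=4$ and $m=3$ where the generic congruence degenerates, which you correctly identify, and in exchange it avoids the algebraic manipulation of the geometric series; the paper's inequality handles all $m$ uniformly but is slightly more computational. Both are complete; note also that your congruence method actually pins down $a=0$ for $m\ge4$, information the paper's argument never needs.
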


\begin{proof}
    Suppose, for the sake of contradiction, that the omitted divisors are $d_1 = 2^ap$ and $d_2=2^cp^{m-2}$ for some exponents $a,c \in \{0, 1, \dots, k\}$.
    
    Under these conditions, Proposition \ref{prop:p_is_mersenne_form} establishes that we must have $p=2^{k+1}-1$.
    
    We now use the equation from Lemma \ref{lem:equivalent_form}, which is $(2^{k+1}-p)(1+p+\dots+p^{m-1})=1+d_1+d_2$. Since $p=2^{k+1}-1$, the term $(2^{k+1}-p)$ is simply 1. The equation becomes:
    \[ 1+p+\dots+p^{m-1} = 1+2^ap+2^cp^{m-2}. \]
    Subtracting 1 from both sides and then dividing by $p$ (since $p \neq 0$) gives:
    \[ 1+p+\dots+p^{m-2} = 2^a+2^cp^{m-3}. \]
    From this, we can establish an inequality. The left side is the sum of a geometric series, and the right side can be bounded above by setting the coefficients $2^a$ and $2^c$ to their maximum possible value, $2^k$:
    \begin{align*}
        \frac{p^{m-1}-1}{p-1} &\le 2^k + 2^k p^{m-3} = 2^k(1+p^{m-3}).
    \end{align*}
    Since $p = 2^{k+1}-1$, we have $p+1 = 2^{k+1}$, which means $2^k = \frac{p+1}{2}$. Substituting this in:
    \begin{align*}
        \frac{p^{m-1}-1}{p-1} &\le \frac{p+1}{2}(1+p^{m-3}) \\
        2(p^{m-1}-1) &\le (p-1)(p+1)(1+p^{m-3}) \\
        2p^{m-1}-2 &\le (p^2-1)(1+p^{m-3}) \\
        2p^{m-1}-2 &\le p^2+p^{m-1}-1-p^{m-3} \\
        p^{m-1}+p^{m-3}-p^2-1 &\le 0 \\
        p^{m-3}(p^2+1) &\le p^2+1.
    \end{align*}
    Since $p^2+1$ is positive, we can divide by it to get $p^{m-3} \le 1$. Given that $p \ge 3$, this inequality can only hold if the exponent is non-positive, i.e., $m-3 \le 0$, which implies $m \le 3$. Since our initial hypothesis states $m \ge 3$, the only possibility is that $m=3$.
    
    Now we evaluate our equation for $m=3$. The equation $1+p+\dots+p^{m-2} = 2^a+2^cp^{m-3}$ becomes:
    \[ 1+p = 2^a+2^c. \]
    We again substitute $p=2^{k+1}-1$:
    \[ 1+(2^{k+1}-1) = 2^a+2^c \implies 2^{k+1} = 2^a+2^c. \]

    Since the omitted divisors must be distinct, we must have $a \neq c$. The exponents $a$ and $c$ are integers less than or equal to $k$. The maximum possible value for the right side $2^a+2^c$ occurs when one exponent is $k$ and the other is the next largest possible value, $k-1$.
    \[ 2^{k+1} = 2^a+2^c \le 2^k + 2^{k-1} = (1+0.5)2^k = 1.5 \cdot 2^k. \]
    This gives the final contradiction $2 \cdot 2^k \le 1.5 \cdot 2^k$, which simplifies to $2 \le 1.5$.
    
    The initial assumption that the omitted divisors had the form $2^ap$ and $2^cp^{m-2}$ must be false.
\end{proof}

\subsection{Final Classification for $2^{k+1} < p^2 + 1$}
\label{sec:final-characterization}

\begin{proposition}
    If $n=2^kp^m$ is $2$-near perfect, and $m\ge3$ and $2^{k+1}<p^2+1$, then
    \begin{itemize}
        \item $m=3$,
        \item $p=2^{k+1}-1$,
        \item and $p$ and $p^2$ are the omitted divisors.
    \end{itemize}
\end{proposition}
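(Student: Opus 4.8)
The plan is to combine the structural results already established in this section to pin down the exact shape of the two omitted divisors, and then to solve the resulting explicit equation. Write the omitted divisors as $d_1 = 2^a p^b$ and $d_2 = 2^c p^d$ with $b \le d$. First I would assemble the constraints: Proposition \ref{prop:p_divisibility_constraint} shows that $p^2$ cannot divide both divisors (so $b \le 1$) and that $d \ge m-2$; Proposition \ref{prop:no_power_of_2_divisor} rules out $b=0$, forcing $b=1$; Proposition \ref{prop:no_pm_divisor} gives $d \le m-1$; and Proposition \ref{prop:no_p_and_pm2_divisors} rules out $d=m-2$. Together these force $d = m-1$, so the divisors are exactly $2^a p$ and $2^c p^{m-1}$. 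Finally, Proposition \ref{prop:p_is_mersenne_form} supplies $p = 2^{k+1}-1$.

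The second step is to extract a clean equation. Substituting $p = 2^{k+1}-1$ into Lemma \ref{lem:equivalent_form} is the crucial simplification, because the leading factor collapses: $2^{k+1}-p = 1$. This gives $1 + p + \cdots + p^{m-1} = 1 + 2^a p + 2^c p^{m-1}$. Cancelling the $1$ and dividing through by $p$ yields
\[ 1 + p + \cdots + p^{m-2} = 2^a + 2^c p^{m-2}. \]
A size comparison then forces $c=0$: the term $2^c p^{m-2}$ cannot exceed the left-hand side $\tfrac{p^{m-1}-1}{p-1} < \tfrac{p^{m-1}}{p-1}$, which gives $2^c < \tfrac{p}{p-1} \le \tfrac{3}{2}$, hence $c=0$.

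With $c=0$ the equation reduces to $1 + p + \cdots + p^{m-3} = 2^a$, that is, $\tfrac{p^{m-2}-1}{p-1} = 2^a$. I expect this to be the step that \emph{looks} hardest, since in isolation it is a Diophantine question about when a base-$p$ repunit is a power of two. The resolution, however, is immediate once the two facts $a \le k$ (because $2^a p \mid n$) and $p+1 = 2^{k+1}$ are used together. If $m \ge 4$, the left-hand side is at least $1 + p = 2^{k+1}$, which forces $a \ge k+1$ and contradicts $a \le k$. Hence $m=3$, in which case the sum degenerates to $1$ and $a=0$. Therefore the omitted divisors are $2^0 p = p$ and $2^0 p^{m-1} = p^2$, with $p = 2^{k+1}-1$ and $m=3$, exactly as claimed.
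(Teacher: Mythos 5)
Your proposal is correct and follows essentially the same route as the paper: assemble Propositions \ref{prop:p_divisibility_constraint}, \ref{prop:no_pm_divisor}, \ref{prop:no_power_of_2_divisor}, \ref{prop:no_p_and_pm2_divisors}, and \ref{prop:p_is_mersenne_form} to force the omitted divisors into the form $2^ap$ and $2^cp^{m-1}$ with $p=2^{k+1}-1$, then reduce Lemma \ref{lem:equivalent_form} to $1+p+\cdots+p^{m-2}=2^a+2^cp^{m-2}$ and solve. The only (immaterial) difference is the endgame: you pin down $c=0$ first by a size bound and then get $m=3$, $a=0$ from $a\le k$ and $p+1=2^{k+1}$, whereas the paper first deduces $p\mid 2^a-1$ to get $a=0$ and then uses $\gcd(p^{m-3},1+\cdots+p^{m-3})=1$ to force $m=3$ and $c=0$; you are also more explicit than the paper about why the divisors must have exactly that form.
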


\begin{proof}
    \begin{align*}
        1+p+\dots+p^{m-1}&=1+2^ap+2^cp^{m-1}\\
        p+\dots+p^{m-1}&=2^ap+2^cp^{m-1}\\
        1+p+\dots+p^{m-2}&=2^a+2^cp^{m-2}\\
        p+\dots+p^{m-2}&=2^a+2^cp^{m-2}-1\\
        p(1+\dots+p^{m-3})&=2^a+2^cp^{m-2}-1\\
      \end{align*}
      and so
      \begin{align*}
        p\mid2^a-1.
    \end{align*}
    If $2^a>1$, then $p\le2^a-1$. So: $2^{k+1}-1\le2^a-1\le2^k-1$, which is a contradiction. So, $2^a=2^0=1$.

    Recall that $1+p+\dots+p^{m-2}=2^a+2^cp^{m-2}$. So:
    \begin{align*}
        1+p+\dots+p^{m-2}&=1+2^cp^{m-2}\\
        p+\dots+p^{m-2}&=2^cp^{m-2}\\
        1+\dots+p^{m-3}&=2^cp^{m-3}\\
        p^{m-3}&\mid1+\dots+p^{m-3}.
    \end{align*}
    Observe that $(p^{m-3}, 1+ \cdots +p^{m-3})=1$ , and thus $p^{m-3}\mid1$. 

    Therefore, $p^{m-3}\le1$. So, $m\le3$. As $m$ is also at least 3, $m=3$.

    Recall that $1+\dots+p^{m-3}=2^cp^{m-3}$. Hence, $1=2^c$. The result follows.
\end{proof}


Combining this with Theorem \ref{thm:main_constraint}, which eliminates the case $2^{k+1} \geq p^2+1$, we obtain Theorem \ref{full characterization theorem for m>=3}.

\section{$2$-Deficient Perfect Numbers}
\label{sec:2deficient}


\begin{definition}
    A positive integer $n$ is called \emph{$2$-deficient perfect} if there exist two distinct positive divisors $d_1$ and $d_2$ of $n$ such that:
    \[\sigma(n) = 2n - d_1 - d_2,\]
    where $\sigma(n)$ denotes the sum of all positive divisors of $n$.
\end{definition}

Note that prior work by Chen \cite{Chen2019} classified all odd $2$-deficient perfect numbers with exactly two distinct prime factors. 

The preliminary lemmas from Section~\ref{sec:prelim} apply with appropriate sign changes. In particular, Lemma \ref{lem:equivalent_form} becomes $(2^{k+1}-p)(1+p+\dots+p^{m-1}) = 1-d_1-d_2$.

\begin{proposition}\label{prop:2def_prelim}
    If $n=2^kp^m$ is a $2$-deficient perfect number, then
    \begin{enumerate}
        \item[(a)] The prime $p$ must satisfy $p > 2^{k+1}$.
        \item[(b)] At least one of the omitted divisors, $d_1$ or $d_2$, must be a power of 2.
    \end{enumerate}
\end{proposition}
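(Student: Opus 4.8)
The plan is to base everything on the sign-flipped version of Lemma \ref{lem:equivalent_form} noted just above the proposition, namely $(2^{k+1}-p)(1+p+\dots+p^{m-1}) = 1-d_1-d_2$, and to read off both conclusions from it. Part (a) will come from a sign comparison of the two sides, and part (b) from reducing this same identity modulo $p$ and invoking part (a).

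For part (a), I would first observe that since $d_1$ and $d_2$ are \emph{distinct} positive divisors, we have $d_1+d_2\ge 1+2=3$, so the right-hand side $1-d_1-d_2$ is strictly negative. The factor $1+p+\dots+p^{m-1}$ is a sum of positive terms and hence strictly positive. Consequently the remaining factor $2^{k+1}-p$ must be strictly negative, which gives $p>2^{k+1}$. Strictness is automatic here in any case, since $p$ is an odd prime while $2^{k+1}$ is even, so equality between them is impossible.

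For part (b), I would argue by contradiction, supposing that neither omitted divisor is a power of $2$. Every divisor of $n=2^kp^m$ has the form $2^a p^b$, and being a power of $2$ is exactly the condition $b=0$; so under the supposition both $d_1$ and $d_2$ are divisible by $p$. Reducing the identity $(2^{k+1}-p)(1+p+\dots+p^{m-1})=1-d_1-d_2$ modulo $p$, the geometric sum collapses to $1\pmod p$, so the left side becomes $2^{k+1}\pmod p$, while the right side becomes $1\pmod p$. Hence $p\mid 2^{k+1}-1$, which forces $p\le 2^{k+1}-1<2^{k+1}$, directly contradicting part (a). Therefore at least one omitted divisor has vanishing $p$-exponent, i.e.\ is a power of $2$.

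I do not expect a serious obstacle, since both parts reduce to essentially one-line consequences of the sign-flipped lemma and the work is little more than bookkeeping. The only points requiring care are (i) using distinctness of $d_1,d_2$ to guarantee $d_1+d_2\ge 3$, and hence that $1-d_1-d_2$ is strictly negative rather than merely nonpositive; and (ii) being explicit that ``power of $2$'' means the $p$-exponent vanishes, so that negating it genuinely places a factor of $p$ into both divisors and thereby enables the modular reduction. If one preferred to avoid relying on the restated lemma, the identical mod-$p$ step could instead be run on the sign-flipped analogue of Lemma \ref{lem:basic_equation}, yielding the same conclusion.
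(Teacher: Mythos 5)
Your proposal is correct and follows essentially the same route as the paper: part (a) is the identical sign argument on $(2^{k+1}-p)(1+p+\dots+p^{m-1})=1-d_1-d_2$, and part (b) is the same mod-$p$ reduction yielding $p\mid 2^{k+1}-1$ and a contradiction with (a), the only cosmetic difference being that the paper reduces $d_1+d_2=2n-\sigma(n)$ modulo $p$ while you reduce the equivalent-form identity directly.
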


\begin{proof}
    (a) From the modified Lemma \ref{lem:equivalent_form}, we have $(2^{k+1}-p)(1+p+\dots+p^{m-1}) = 1-d_1-d_2$. Since $d_1$ and $d_2$ are distinct positive divisors, $1-d_1-d_2$ is a negative integer. The sum $(1+p+\dots+p^{m-1})$ is positive. Therefore, the term $(2^{k+1}-p)$ must be negative. As it is an integer, $2^{k+1}-p \le -1$, which implies $p \ge 2^{k+1}+1$. Thus, $p > 2^{k+1}$.

    (b) Suppose, for the sake of contradiction, that $p$ divides both $d_1$ and $d_2$. Then $p$ also divides their sum, $d_1+d_2$. From the definition $\sigma(n) = 2n - (d_1+d_2)$, it follows that $d_1+d_2 = 2n - \sigma(n)$. We have:
    \[ d_1+d_2 = 2 \cdot 2^k p^m - \sigma(2^k)\sigma(p^m) = 2^{k+1}p^m - (2^{k+1}-1)(1+p+\dots+p^m). \]
    Taking this modulo $p$, we get $d_1+d_2 \equiv -(2^{k+1}-1)(1) \pmod{p}$. Since $p \mid d_1+d_2$, it must be that $p \mid -(2^{k+1}-1)$, which means $p \le 2^{k+1}-1$. This contradicts part (a), which showed $p > 2^{k+1}$. Thus, the assumption is false, and at least one divisor is not divisible by $p$, meaning it must be a power of 2.
\end{proof}

\begin{theorem}\label{thm:2def_classification}
    An integer $n=2^kp^m$ with $p$ an odd prime and $m \neq 2$ is $2$-deficient perfect if and only if $m=1$ and $p$ is a prime of the form $p=2^{k+1}+2^a+2^b-1$ for distinct integers $a, b \in \{0, 1, \dots, k\}$.
\end{theorem}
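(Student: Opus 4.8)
The plan is to prove the equivalence in two directions, dispatching the ``if'' direction by a direct computation and extracting the ``only if'' direction from a reduction modulo $p$ that simultaneously pins down the shape of $p$ and constrains the exponent $m$.

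For the ``if'' direction, suppose $m = 1$ and $p = 2^{k+1} + 2^a + 2^b - 1$ is prime with $a \ne b$ in $\{0, \dots, k\}$. Then $\sigma(n) = (2^{k+1}-1)(1+p)$, and a one-line expansion gives
\[
2n - \sigma(n) = 2^{k+1}p - (2^{k+1}-1)(1+p) = p + 1 - 2^{k+1} = 2^a + 2^b.
\]
Since $a, b \le k$, both $2^a$ and $2^b$ are divisors of $n = 2^k p$, and they are distinct because $a \ne b$; taking $d_1 = 2^a$ and $d_2 = 2^b$ exhibits $n$ as 2-deficient perfect. This direction is routine.

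For the ``only if'' direction, let $n = 2^k p^m$ be 2-deficient perfect. By Proposition \ref{prop:2def_prelim} we have $p > 2^{k+1}$ and at least one omitted divisor is a power of $2$; write $d_1 = 2^a$ and $d_2 = 2^c p^d$ with $0 \le a, c \le k$ and $0 \le d \le m$. The workhorse is the sign-changed form of Lemma \ref{lem:equivalent_form}, which (using $p > 2^{k+1}$) rearranges to
\[
(p - 2^{k+1})\bigl(1 + p + \dots + p^{m-1}\bigr) = 2^a + 2^c p^d - 1.
\]
Reducing modulo $p$ collapses the left side to $-2^{k+1}$, and I would split on whether $d = 0$. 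If $d = 0$, then $p \mid 2^{k+1} + 2^a + 2^c - 1$; since this quantity lies strictly between $0$ and $2p$, it must equal $p$, giving $p - 2^{k+1} = 2^a + 2^c - 1$. Substituting back collapses the displayed identity to $(2^a + 2^c - 1)(p + \dots + p^{m-1}) = 0$, and as the first factor is positive the geometric tail must be empty, forcing $m = 1$. With $a \ne c$ this is precisely the claimed family, so this case is clean.

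The remaining case $d \ge 1$ is where I expect the genuine difficulty. Here the congruence forces $p = 2^{k+1} + 2^a - 1$, hence $p - 2^{k+1} = 2^a - 1$; substituting and cancelling a single factor of $p$ — justified by a $p$-adic valuation count, since $2^a - 1$ and $1 + p + \dots + p^{m-2}$ are both coprime to $p$ — yields $d = 1$, then $a = 1$ (the odd factor $2^a - 1$ must be $1$), and finally the reduced equation $1 + p + \dots + p^{m-2} = 2^c$. The whole classification therefore hinges on showing this base-$p$ repunit cannot be a power of $2$ once $m \ge 2$. This is the main obstacle, and it is genuinely delicate: the boundary value $m = 2$, where the sum degenerates to the single term $1$, must be scrutinized with special care, while for $m \ge 3$ one is reduced to comparing $v_2\bigl(1 + p + \dots + p^{m-2}\bigr)$ against the size of the sum when $p = 2^{k+1} + 1$ — a Nagell--Ljunggren-type constraint that is exactly the step where the finest analysis (and the eventual success or failure of the stated $m=1$ conclusion) is concentrated.
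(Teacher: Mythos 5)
Your reduction is correct as far as it goes, and it is a genuinely different route from the paper's: the paper forces $d=m-1$ via a size estimate on the divisibility $1+p+\dots+p^{m-1}\mid 2^a+2^cp^d-1$, then pushes $m$ down to $2$ and finishes with the parity constraint of Lemma \ref{lem:parity_constraint}, whereas you reduce the identity $(p-2^{k+1})(1+p+\dots+p^{m-1})=2^a+2^cp^d-1$ modulo $p$ to pin down $p$ \emph{exactly} ($p=2^{k+1}+2^a+2^c-1$ when $d=0$, $p=2^{k+1}+2^a-1$ when $d\ge 1$), which collapses the $d=0$ case instantly and, via the valuation count, correctly yields $d=1$, $a=1$, and $1+p+\dots+p^{m-2}=2^c$ in the other case. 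Your worry about a Nagell--Ljunggren-type difficulty for $m\ge 3$ is unfounded: since $c\le k$ and $p=2^{k+1}+1>2^k\ge 2^c$, already $1+p>2^c$, so $m\ge 3$ dies on size alone.

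The genuine problem is the boundary case you flagged but did not resolve, and it cannot be resolved in the direction the theorem asserts: for $m=2$ the equation degenerates to $1=2^c$, i.e.\ $c=0$, and this is not a contradiction but a solution. With $a=1$, $c=0$, $d=1$, $p=2^{k+1}+1$, the number $n=2^k(2^{k+1}+1)^2$ satisfies $\sigma(n)=2n-2-p$ whenever $2^{k+1}+1$ is prime; concretely $n=50=2\cdot 5^2$ gives $\sigma(50)=93=100-2-5$, and $n=2312=2^3\cdot 17^2$ gives $\sigma(2312)=4605=4624-2-17$. So Theorem \ref{thm:2def_classification} as stated is false, and no completion of your argument (or any other) can prove it. The paper's own proof fails at exactly the same spot: in its $m=2$ analysis it passes from $1+p\mid 2^a-2$ to $p+1\le 2^a-2$, which is invalid when $a=1$ because then $2^a-2=0$ and the divisibility is vacuous --- and $a=1$ is precisely the surviving case your reduction isolates. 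The correct classification must admit the additional family $m=2$, $p=2^{k+1}+1$ prime, with omitted divisors $2$ and $p$.
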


\begin{remark}
    The hypothesis $m \neq 2$ is necessary: $n = 2 \cdot 5^2 = 50$ is $2$-deficient perfect with $m = 2$, since $\sigma(50) + 2 + 5 = 100 = 2 \cdot 50$. A complete classification of the $m = 2$ case remains open.
\end{remark}

\begin{proof}
We prove the theorem by considering the cases $m=1$ and $m \ge 2$ (with $m \neq 2$) separately.

Subcase 1: $m=1$. Let $n=2^kp$. The omitted divisors $d_1$ and $d_2$ are divisors of $n$. By Proposition \ref{prop:2def_prelim}, at least one divisor is a power of 2; let $d_1=2^a$. The other divisor $d_2$ can be of the form $2^c$ or $2^cp$.

Suppose $d_2 = 2^cp$. The modified Lemma \ref{lem:equivalent_form} for $m=1$ is $(2^{k+1}-p) = 1 - d_1 - d_2$. Substituting the divisors:
\[ 2^{k+1}-p = 1 - 2^a - 2^cp \implies p(2^c-1) = 2^{k+1}-1-2^a. \]
By Proposition \ref{prop:2def_prelim}, $p > 2^{k+1}$. Since $c \le k$, we have $2^c-1 < 2^{k+1} < p$. This means the left side, $p(2^c-1)$, is either 0 (if $c=0$) or a multiple of $p$. The right side, $2^{k+1}-1-2^a$, is positive and smaller than $p$. This is a contradiction.

Therefore, $d_2$ must also be a power of 2, say $d_2=2^b$. The equation becomes:
\[ 2^{k+1}-p = 1-2^a-2^b \implies p = 2^{k+1}+2^a+2^b-1. \]
Since the divisors must be distinct, $a \neq b$. This proves the $m=1$ case.

Subcase 2: $m \geq 3$. We will show that no solutions exist in this case. By Proposition \ref{prop:2def_prelim}, one divisor must be a power of 2, $d_1=2^a$. The other is $d_2=2^cp^d$.
We first show $d_1 \neq 1$ (i.e., $a>0$). If $d_1=1$, the equation from Lemma \ref{lem:equivalent_form} is $(2^{k+1}-p)(1+\dots+p^{m-1}) = -d_2$. The term $(p-2^{k+1})$ is an odd integer greater than 1. It must divide $d_2$. If $p \mid d_2$, then $p \mid p-2^{k+1}$, implying $p \mid 2^{k+1}$, which is impossible. So $d_2$ must be a power of 2, $d_2=2^c$. But then an odd integer greater than 1, $(p-2^{k+1})$, divides a power of 2, which is impossible. Thus, $a>0$.

Now, we use the divisibility argument: $1+p+\dots+p^{m-1} \mid d_1+d_2-1 = 2^a+2^cp^d-1$. From $p > 2^{k+1}$, we get $2^k < p/2$.
\[ 1+p+\dots+p^{m-1} \le 2^a+2^cp^d-1 \le 2^k+2^kp^d-1 < \frac{p}{2} + \frac{p}{2}p^d - 1 = \frac{p(1+p^d)}{2}-1. \]
The leading term on the left is $p^{m-1}$ and on the right is $p^{d+1}/2$. So, $p^{m-1} < p^{d+1}/2$.
If $d \le m-2$, this gives $p^{m-1} < p^{m-1}/2$, a contradiction.
Therefore, the only possibility is $d=m-1$.

With $d=m-1$, we have $1+p+\dots+p^{m-1} \mid 2^a+2^cp^{m-1}-1$.
This implies $1+p+\dots+p^{m-1}$ also divides $(2^a+2^cp^{m-1}-1)p = 2^ap+2^cp^m-p$.
We also know $p^m \equiv 1 \pmod{1+\dots+p^{m-1}}$. Thus:
\[ 1+p+\dots+p^{m-1} \mid 2^ap+2^c(1)-p = (2^a-1)p+2^c. \]
So, $1+p+\dots+p^{m-1} \le (2^a-1)p+2^c \le (2^k-1)p+2^k < (\frac{p}{2}-1)p + \frac{p}{2} = \frac{p^2}{2}-\frac{p}{2}$.
The leading term on the left, $p^{m-1}$, must be less than $\frac{p^2}{2}$. This implies $m-1 < 2$, i.e., $m < 3$, contradicting $m \ge 3$.

Since no solutions exist for $m \ge 3$, the theorem is proven.
\end{proof}

\section{Future Work}
\label{sec:future}

The complete characterization of $2$-near perfect numbers with two distinct prime factors opens several directions for further investigation.

\subsection{Computational Results}


Our computational search was implemented in C++; the source code is available at \url{https://github.com/0xCUB3/Near-Perfect-Tester}.

\subsection{Multi-Prime $2$-near Perfect Numbers}
The smallest examples are shown in the table below:

\begin{table}[htbp]
\begin{center}
\begin{tabular}{|c|c|c|}
\hline
Number & $\sigma$ & Omitted Divisors \\
\hline
30 & 72 & 2, 10 \\
66 & 144 & 1, 11 \\
84 & 224 & 14, 42 \\
90 & 234 & 9, 45 \\
126 & 312 & 18, 42 \\
132 & 336 & 6, 66 \\
156 & 392 & 2, 78 \\
180 & 546 & 6, 180 \\
\hline
\end{tabular}
\\[2mm]
\small{The eight smallest $2$-near perfect numbers with three distinct prime factors}
\end{center}
\end{table}

It appears that if $n$ is a $2$-near perfect number of the form $2^kp^mq^s$, then one of the omitted divisors is always divisible by the largest prime factor raised to its highest possible power. For example, in 180 = $2^2 \cdot 3^2 \cdot 5$, one omitted divisor is 180 = $2^2 \cdot 3^2 \cdot 5$, containing the highest power of 5. A natural next step is to classify families of $2$-near perfect numbers with three distinct prime factors.


\subsection{$3$-Near Perfect Numbers}
The smallest $3$-near perfect numbers are shown in the table below:

\begin{table}[htbp]
\begin{center}
\begin{tabular}{|c|c|c|}
\hline
Number & $\sigma$ & Omitted Divisors \\
\hline
24 & 60 & 1, 3, 8 or 2, 4, 6 \\
30 & 72 & 1, 5, 6 \\
36 & 91 & 1, 6, 12 or 3, 4, 12 or 4, 6, 9 \\
40 & 90 & 1, 4, 5 \\
42 & 96 & 2, 3, 7 \\
48 & 124 & 1, 3, 24 or 4, 8, 16 \\
54 & 120 & 1, 2, 9 \\
60 & 168 & 3, 15, 30 or 6, 12, 30 \\
\hline
\end{tabular}
\\[2mm]
\small{The eight smallest $3$-near perfect numbers}
\end{center}
\end{table}

Many $3$-near perfect numbers have multiple valid combinations of omitted divisors, making it more difficult to identify analogous families.

\subsection{Hybrid Forms}
Beyond the standard definition, we identified several variants for future research:

\begin{enumerate}
    \item \textbf{Hybrid $2$-near Perfect Numbers:} Numbers where one divisor is added and one is subtracted, satisfying $\sigma(n) = 2n + d_1 - d_2$. Initial computations suggest these are more numerous than standard $2$-near perfect numbers.

    \item \textbf{2-Deficient Perfect Numbers:} Numbers where both divisors are subtracted, satisfying $\sigma(n) = 2n - d_1 - d_2$.
\end{enumerate}

\vskip 20pt\noindent {\bf Acknowledgement}

The research for this paper was performed during the Hopkins School 2025 Spring Mathematics Seminar. 

\bibliographystyle{plainnat}
\bibliography{references}

@phdthesis{Adajar,
  title        = {On odd near-perfect and deficient-perfect numbers with $k$ distinct prime divisors},
  author       = {Carlo Adajar},
  year         = 2016,
  month        = {},
  address      = {Diliman, Philippines},
  note         = {},
  school       = {University of the Philipines},
  type         = {Masters thesis}
}

@article{AMPSZ,
  author = {Aryan, V. and Madhavani, D. and Parikh, S. and Slattery, I. and Zelinsky, J.},
  title = {On 2-near perfect numbers},
  journal = {Integers},
  volume = {24},
  year = {2024},
  pages = {A61}
}

@article{BE,
  author = {Benkoksi, S. and Erd\H{o}s, P.},
  title = {On Weird and Pseudoperfect Numbers},
  journal = {Math. Comp.},
  volume = {28},
  year = {1974},
  pages = {617--623}
}

@article{CCEKLM,
  author = {Cohen, P. and Cordwell, K. and Epstein, A. and Kwan, C. and Lott, A. and Miller, S. J.},
  title = {On near perfect numbers},
  journal = {Acta Arith.},
  volume = {194},
  year = {2020},
  pages = {341--366}
}

@article{Chen2019,
  author = {F. Chen},
  title = {On exactly $K$-deficient-perfect numbers},
  journal = {Integers},
  volume = {19},
  year = {2019},
  pages = {A37}
}

@article{DasSaikia,
  author = {Das, B. and Saikia, H.},
  title = {Some aspects of certain form of near perfect numbers},
  journal = {International Journal of Discrete Mathematics},
  volume = {2},
  year = {2017},
  pages = {64--67}
}

@article{EH,
  author = {Hasanalizade, E.},
  title = {On near-perfect numbers of special forms},
  journal = {Bull. Aust. Math. Soc.},
  volume = {108},
  number = {3},
  year = {2023},
  pages = {366--372}
}

@incollection{JRS,
  author = {Jeba S, F. and Roy, A. and Saikia, M.},
  title = {On k-Facile Perfect Numbers},
  booktitle = {Algebra and its applications},
  series = {Springer Proceedings in Mathematics \& Statistics},
  volume = {474},
  year = {2025},
  pages = {111--121},
  publisher = {Springer},
  address = {Cham, Switzerland}
}

@article{PS,
  author = {Pollack, P. and Shevelev, V.},
  title = {On perfect and near perfect numbers},
  journal = {J. Number Theory},
  volume = {132},
  number = {12},
  year = {2012},
  pages = {3037--3046}
}

@article{RC,
  author = {Ren, X. and Chen, Y.},
  title = {On near perfect numbers with two distinct prime factors},
  journal = {Bull. Aust. Math. Soc.},
  volume = {88},
  number = {3},
  year = {2013},
  pages = {520--524}
}

@article{TMF,
  author = {Tang, M. and Ma, X. and Feng, M.},
  title = {On near perfect numbers},
  journal = {Colloq. Math},
  volume = {144},
  number = {2},
  year = {2016},
  pages = {157--188}
}

\end{document}